%
%
%


\documentclass{amsart}




\usepackage{lipsum}
\usepackage{amsfonts}
\usepackage{graphicx}
\usepackage{epstopdf}
\usepackage{algorithm}
\usepackage{algorithmic}
\usepackage{upgreek}
\usepackage{overpic}
\usepackage{enumerate}
\usepackage{enumitem}
\usepackage{bm}
\usepackage[hidelinks]{hyperref}

\usepackage{amsmath,amssymb}

\newcommand{\cN}{\mathcal{N}}
\newcommand{\cL}{\mathcal{L}}

\newcommand{\bE}{\mathbb{E}}

\newcommand{\bR}{\mathbb{R}}

\newcommand{\bN}{\mathbb{N}}

\newcommand{\vx}{\mathbf{x}}
\newcommand{\vy}{\mathbf{y}}

\newcommand{\vz}{\mathbf{z}}

\newcommand{\sfA}{\mathsf{A}}

\newcommand{\sfT}{\mathsf{T}}

\newcommand{\mcl}{\mathcal}

\newcommand{\mbf}{\mathbf}
\newcommand{\mbb}{\mathbb}

\newcommand{\bx}{\mbf x}

\newcommand{\Md}{M_\Omega}
\newcommand{\mU}{\mcl{U}}
\newcommand{\st}{{\rm\,s.t.}}
\newcommand{\R}{\mbb{R}}
\newcommand{\bz}{\mbf z}
\newcommand{\cc}{K}
\newcommand{\bphi}{{\bm{\phi}}}
\newcommand{\dd}{{\rm d}}

\DeclareMathOperator*{\argmin}{arg\,min}

\DeclareMathOperator*{\minimize}{{\rm minimize}}

\newenvironment{newremark}[1]{%
    \begin{remark}#1}{%
    \Endofdef\end{remark}%
}
\newcommand{\xqed}[1]{%
    \leavevmode\unskip\penalty9999 \hbox{}\nobreak\hfill
    \quad\hbox{\ensuremath{#1}}}
\newcommand{\Endofdef}{\xqed{\lozenge}}
\newtheorem{theorem}{Theorem}[section]
\newtheorem{lemma}[theorem]{Lemma}

\theoremstyle{definition}
\newtheorem{definition}[theorem]{Definition}

\theoremstyle{remark}
\newtheorem{remark}[theorem]{Remark}

\numberwithin{equation}{section}

\usepackage{enumitem}
\newtheorem{assumption}{Assumption}
\newtheorem{proposition}[theorem]{Proposition}

\usepackage{tikz}
\usetikzlibrary{shapes.arrows, patterns}
\definecolor{lightblue}{HTML}{a1b4c7}
\definecolor{orange}{HTML}{ea8810}
\definecolor{silver}{HTML}{b0aba8}
\definecolor{rust}{HTML}{b8420f}
\definecolor{seagreen}{HTML}{23553c}

\colorlet{lightsilver}{silver!30!white}
\colorlet{darkorange}{orange!85!black}
\colorlet{darksilver}{silver!85!black}
\colorlet{darklightblue}{lightblue!85!black}
\colorlet{darkrust}{rust!85!black}
\colorlet{darkseagreen}{seagreen!85!black}
\newcommand{\blue}[1]{#1}

\begin{document}

\title[Sparse Cholesky for Solving Nonlinear PDEs]{Sparse Cholesky Factorization for Solving Nonlinear PDEs via Gaussian Processes}


\author{Yifan Chen}
\address{Courant Institute, New York University, NY 10012, Corresponding author}
\curraddr{}
\email{yifan.chen@nyu.edu}
\thanks{}

\author{Houman Owhadi}
\address{Applied and Computational Mathematics, Caltech, Pasadena, CA 91106}
\curraddr{}
\email{owhadi@caltech.edu}
\thanks{}

\author{Florian Sch\"afer}
\address{School of Computational Science and Engineering
Georgia Institute of Technology, Atlanta, GA 30332}
\curraddr{}
\email{florian.schaefer@cc.gatech.edu}
\thanks{}

\subjclass[2010]{65F30, 60G15, 65N75, 65M75, 65F50, 68W40}

\date{}

\dedicatory{}

\begin{abstract}
\blue{In recent years, there has been widespread adoption of machine learning-based approaches to automate the solving of partial differential equations (PDEs). Among these approaches, Gaussian processes (GPs) and kernel methods have garnered considerable interest due to their flexibility, robust theoretical guarantees, and close ties to traditional methods. They can transform the solving of general nonlinear PDEs into solving quadratic optimization problems with nonlinear, PDE-induced constraints. However, the complexity bottleneck lies in computing with dense kernel matrices obtained from pointwise evaluations of the covariance kernel, and its \textit{partial derivatives}, a result of the {PDE constraint} and for which fast algorithms are scarce.

The primary goal of this paper is to provide a near-linear complexity algorithm for working with such kernel matrices. We present a sparse Cholesky factorization algorithm for these matrices based on the near-sparsity of the Cholesky factor under a novel ordering of pointwise and derivative measurements. The near-sparsity is rigorously justified by directly connecting the factor to GP regression and exponential decay of basis functions in numerical homogenization. We then employ the Vecchia approximation of GPs, which is optimal in the Kullback-Leibler divergence, to compute the approximate factor. This enables us to compute $\epsilon$-approximate inverse Cholesky factors of the kernel matrices with complexity $O(N\log^d(N/\epsilon))$ in space and $O(N\log^{2d}(N/\epsilon))$ in time. We integrate sparse Cholesky factorizations into optimization algorithms to obtain fast solvers of the nonlinear PDE. We numerically illustrate our algorithm's near-linear space/time complexity for a broad class of nonlinear PDEs such as the nonlinear elliptic, Burgers, and Monge-Amp\`ere equations. In summary, we provide a fast, scalable, and accurate method for solving general PDEs with GPs and kernel methods.
}

\end{abstract}
\maketitle
\tableofcontents


\section{Introduction} 

Machine learning and probabilistic inference \cite{murphy2012machine} have gained increasing popularity due to their capacity to automate the solution of computational problems.  Gaussian processes (GPs) \cite{williams2006gaussian} offer a promising approach, combining the theoretical rigor of traditional numerical algorithms with the flexible design of machine learning solvers \cite{owhadi2015bayesian,owhadi2017multigrid,raissi2018numerical,cockayne2019bayesian,chen2021solving}. 
Additionally, GPs exhibit deep connections to kernel methods \cite{scholkopf2002learning, schaback2006kernel}, neural networks \cite{neal1996priors,lee2017deep,jacot2018neural}, and meshless methods \cite{schaback2006kernel,zhang2000meshless}.

This paper investigates the computational efficiency of GPs and kernel methods in solving nonlinear PDEs, where dense kernel matrices are encountered, with entries derived from pointwise values and \textit{derivatives} of the covariance kernel function of the GP. The methodology developed herein could also be of practical interest in other contexts where derivative information of a GP or function is available, such as in Bayesian optimization \cite{wu2017bayesian} and PDE discovery \cite{long2022kernel}.
\subsection{The context} 
\blue{
Recent research proposes the use of GPs and kernel methods for solving nonlinear PDEs. The rationale behind the approach is to regard the PDE as data in machine learning, describing the relationship between pointwise evaluations and derivatives of a function at each collocation point. Then, by placing a GP prior on the unknown function and solving the maximum a posteriori (MAP) estimation given the PDE data at collocation points, a numerical solver for the PDEs can be obtained \cite{chen2021solving}. We review the methodology in Section \ref{sec: Solving nonlinear PDEs via GPs}. In short, the method transforms every nonlinear PDE into the following quadratic optimization problem with nonlinear, PDE-induced constraints:}
\begin{equation}
\label{eqn: the problem}
    \left\{
      \begin{aligned}
  &\min_{\vz \in \bR^N} && \vz^T K(\bphi, \bphi)^{-1} \vz \\
  &\mathrm{s.t.}  && { F(\vz)} = { \vy}\, ,
\end{aligned}
\right. 
\end{equation}
where $F$ and $\vy$ encode the PDE and source/boundary data. $K(\bphi, \bphi) \in \bR^{N\times N}$ is a positive definite kernel matrix whose entries are $k(\vx_i,\vx_j)$ or (a linear combination of) derivatives of the kernel function such as $\Delta_\vx k(\vx_i,\vx_j)$. Here $\vx_i, \vx_j \in \bR^d$ are some sampled collocation points in space. Entries like $k(\vx_i,\vx_j)$ arise from Diracs measurements while entries like $\Delta_\vx k(\vx_i,\vx_j)$ come from derivative measurements of the GP; for more details see Section \ref{sec: Solving nonlinear PDEs via GPs}. 

\blue{In \cite{chen2021solving, batlle2023error}, theoretical analyses and numerical experiments demonstrate the flexibility and provable convergence of the method. For typical PDE problems with smooth solutions, using Mat\'ern kernels and Gaussian kernels can achieve high accuracy with a moderate number (i.e. $N \sim 10^3 \ \text{to}\  10^4$) of collocation points; in such case the algorithm is more efficient than vanilla methods such as finite difference and is much simpler and more interpretable than neural network based approaches. However, computing with the dense matrix $K(\bphi, \bphi)$ na\"ively results in $O(N^3)$ space/time complexity, which becomes prohibitive as the number of collocation points further increase. In this paper, we aim to provide a general algorithm to make the GP approach scalable for solving nonlinear PDEs.}

\subsection{Contributions and organizations} \blue{This paper introduces an algorithm with a computational complexity of $O(N\log^d(N/\epsilon))$ in space and $O(N\log^{2d}(N/\epsilon))$ in time. This algorithm outputs a permutation matrix $P_{\rm perm}$ and a sparse upper triangular matrix $U$ with $O(N\log^d(N/\epsilon))$ nonzero entries. The resulting matrices satisfy the condition:
\begin{equation}
    \|K(\bphi, \bphi)^{-1} - P_{\rm perm}^TU{U}^TP_{\rm perm}\|_{\mathrm{Fro}} \leq \epsilon\, ,
\end{equation}
where $\|\cdot\|_{\mathrm{Fro}}$ is the Frobenius norm. The algorithm details are elaborated in Section \ref{sec: The sparse Cholesky factorization algorithm}. 

We rigorously analyze the error of the algorithm in Section \ref{sec: the theory}.  The analysis requires sufficient Dirac measurements within the domain and is established for a class of kernel functions that are Green functions of differential operators, such as Mat\'ern-like kernels. The theory relies on the interplay of linear algebra, Gaussian process conditioning, screening effects, and numerical homogenization, which demonstrate the \textit{exponential decay/near-sparsity} of the inverse Cholesky factor of the kernel matrix after permutation.


Utilizing these sparse factors, gradient-based optimization methods for \eqref{eqn: the problem} become scalable. We can also employ second-order methods, often more efficient, to solve \eqref{eqn: the problem} by linearizing the constraint and solving a sequential quadratic programming problem. This results in a linear system involving a reduced kernel matrix $K(\bphi^k,\bphi^k) := DF(\vz^k)K(\bphi,\bphi)(DF(\vz^k))^T$ at each iterate $\bz^k$; refer to Section \ref{sec: Gauss-Newton and preconditioned conjugate gradient}. For this reduced kernel matrix, where insufficient Dirac measurements are present in the domain, our theoretical guarantee for its sparse Cholesky factorization no longer holds. Nevertheless, we can apply the algorithm with a slightly different permutation and couple it with preconditioned conjugate gradient (pCG) methods to solve the linear system. Our experiments demonstrate that nearly constant steps of pCG suffice for convergence. For many nonlinear PDEs, we observe that the above sequential quadratic programming approach converges in $O(1)$ steps. Consequently, our algorithm leads to a near-linear space/time complexity solver for general nonlinear PDEs, assuming the sequential quadratic programming iterations converge. The assumption of convergence depends on the selection of kernels and the property of the PDE, and we demonstrate it numerically in solving nonlinear elliptic, Burgers, and Monge-Amp\`ere equations; see Section \ref{sec: Numerical experiments}. We make concluding remarks in Section \ref{sec: conclusions}.}



\subsection{Related work} We summarize relevant literature below.
\subsubsection{Machine learning PDEs} 
Machine learning methods, such as those based on neural networks (NNs) and GPs, have shown remarkable promise in automating scientific computing, for instance in solving PDEs. Recent developments in this field include operator learning using prepared solution data \cite{li2020fourier,bhattacharya2021model,nelsen2021random,lu2021learning} and learning a single solution without any solution data \cite{han2018solving,raissi2019physics,chen2021solving,karniadakis2021physics}. This paper focuses on the latter. NNs provide an expressive function representation. Empirical success has been widely reported in the literature. However, the training of NNs often requires significant tuning and takes much longer than traditional solvers \cite{grossmann2023can}. Considerable research efforts have been devoted to stabilizing and accelerating the training process \cite{krishnapriyan2021characterizing,wang2021understanding,wang2022and,daw2022rethinking,zeng2023competitive}. 

GP and kernel methods are based on a more interpretable and theoretically grounded function representation rooted in the Reproducing Kernel Hilbert Space (RKHS) theory \cite{wendland2004scattered,berlinet2011reproducing,owhadi2019operator}; with hierarchical kernel learning \cite{wilson2016deep,owhadi2019kernel,chen2021consistency,darcy2023one}, these representations can be made expressive as well. Nevertheless, working with dense kernel matrices is common, which often limits scalability. In the case of PDE problems, these matrices may also involve partial derivatives of the kernels \cite{chen2021solving}, and fast algorithms for such matrices are less developed compared to the derivative-free counterparts.

\subsubsection{Fast solvers for kernel matrices} 
Approximating dense kernel matrices (denoted by $\Theta$) is a classical problem in scientific computing and machine learning. Most existing methods focus on the case where $\Theta$ only involves the pointwise values of the kernel function. These algorithms typically rely on low-rank or sparse approximations, as well as their combination and multiscale variants. Low-rank techniques include Nystr\"om's approximations \cite{williams2000using,musco2017recursive,chen2022randomly}, rank-revealing Cholesky factorizations \cite{gu2004strong}, inducing points via a probabilistic view \cite{quinonero2005unifying}, and random features \cite{rahimi2007random}. Sparsity-based methods include covariance tapering \cite{furrer2006covariance}, local experts (see a review in \cite{liu2020gaussian}), and approaches based on precision matrices and stochastic differential equations \cite{lindgren2011explicit, roininen2011correlation,sanz2022spde,sanz2022finite}. Combining low-rank and sparse techniques can lead to efficient structured approximation \cite{wilson2015kernel} and can better capture short and long-range interactions \cite{sang2012full}. Multiscale and hierarchical ideas have also been applied to seek for a full-scale approximation of $\Theta$ with a \textit{near-linear} complexity. \blue{They include $\mathcal{H}$ matrix \cite{hackbusch1999sparse,hackbusch2000sparse,hackbusch2002data} and variants \cite{li2012new,ambikasaran2013mathcal,ambikasaran2015fast,l2016hierarchical,minden2017recursive,minden2017fast,litvinenko2019likelihood,geoga2020scalable} that rely on the low-rank structure of the off-diagonal block matrices at different scales; wavelets-based methods \cite{beylkin1991fast,gines1998lu} that use the sparsity of $\Theta$ in the wavelet basis; multiresolution predictive processes \cite{katzfuss2017multi}; and Vecchia approximations \cite{vecchia1988estimation,katzfuss2020vecchia} and sparse Cholesky factorizations \cite{schafer2021compression, schafer2021sparse} that rely on the approximately sparse correlation conditioned on carefully ordered points. }

For $\Theta$ that contains derivatives of the kernel function, several work \cite{eriksson2018scaling,padidar2021scaling,de2021high} has utilized structured approximation to scale up the computation; no rigorous accuracy guarantee is proved. The inducing points approach \cite{yang2018sparse,meng2022sparse} has also been explored; however since this method only employs a low-rank approximation, the accuracy and efficiency can be limited.
\subsubsection{Screening effects in spatial statistics} Notably, the sparse Cholesky factorization algorithm in \cite{schafer2021sparse}, formally equivalent to Vecchia's approximation \cite{vecchia1988estimation,katzfuss2020vecchia}, achieves a state-of-the-art complexity $O(N\log^d(N/\epsilon))$ in space and $O(N\log^{2d}(N/\epsilon))$ in time for a wide range of kernel functions, with a rigorous theoretical guarantee. This algorithm is designed for kernel matrices with derivative-free entries and is connected to the screening effect in spatial statistics \cite{stein2002screening,stein20112010}. The screening effect implies that approximate conditional independence of a spatial random field is likely to occur, under suitable ordering of points. The line of work \cite{owhadi2017multigrid,owhadi2019operator,schafer2021compression} provides quantitative exponential decay results for the conditional covariance in the setting of a coarse-to-fine ordering of data points, laying down the theoretical groundwork for \cite{schafer2021sparse}.

A fundamental question is how the screening effect behaves when derivative information of the spatial field is incorporated, and how to utilize it to extend sparse Cholesky factorization methods to kernel matrices that contain derivatives of the kernel. The screening effect studied within this new context can be useful for numerous applications where derivative-type measurements are available.

\section{Solving nonlinear PDEs via GPs} 
\label{sec: Solving nonlinear PDEs via GPs}
In this section, we review the GP framework in \cite{chen2021solving} for solving nonlinear PDEs. We will use a prototypical nonlinear elliptic equation as our running example to demonstrate the main ideas, followed by more complete recipes for general nonlinear PDEs.

Consider the following nonlinear elliptic PDE:
\begin{equation}
    \left\{\begin{aligned}
    -\Delta u + \tau(u) & = f \quad \text{in }\Omega\, ,\\
    u &= g \quad \text{on }\partial\Omega\, ,
    \end{aligned}
    \right.
\end{equation}
where $\tau$ is a nonlinear scalar function and $\Omega$ is a bounded open domain in $\bR^d$ with a Lipschitz boundary. We assume the equation has a strong solution in the classical sense. 
\subsection{The GP framework}
\label{sec: The GP framework}The first step is to sample $M_{\Omega}$ collocation points in the interior and $M_{\partial \Omega}$ on the boundary such that
\[\vx_{\Omega} = \{\vx_1,...,\vx_{M_\Omega}\} \subset \Omega\quad \text{and}\quad \vx_{\partial\Omega} = \{\vx_{M_{\Omega}+1},...,\vx_{M}\} \subset \partial \Omega\, ,\]
where $M = M_{\Omega}+M_{\partial \Omega}$. Then, by assigning a GP prior to the unknown function $u$ with mean $0$ and covariance function $K: \overline{\Omega} \times \overline{\Omega} \to \bR$, the method aims to compute the maximum a posterior (MAP) estimator of the GP given the sampled PDE data, which leads to the following optimization problem
 \begin{equation}\label{running-example-optimization-problem}
    \left\{
      \begin{aligned}
  &\minimize_{u \in \mU}~\|u\| && \\
  &\st \quad -\Delta u(\vx_m) + \tau(u(\vx_m)) = f(\vx_m), \quad &&\text{for } m=1,\ldots,\Md\,,\\
 & \hspace{6ex} u(\vx_m)=g(\vx_m),     \quad &&\text{for }m=\Md+1,\ldots,M\,.
\end{aligned}
\right.
\end{equation}
Here, $\|\cdot\|$ is the Reproducing Kernel Hilbert Space (RKHS) norm corresponding to the kernel/covariance function $K$. 

Regarding consistency, once $K$ is sufficiently regular, the above solution will converge to the exact solution of the PDE when $M_{\Omega},M_{\partial\Omega}\to \infty$; see Theorem 1.2 in \cite{chen2021solving} and convergence rates in \cite{batlle2023error}. The methodology can be seen as a nonlinear generalization of many radial basis function based meshless methods \cite{schaback2006kernel} and probabilistic numerics \cite{owhadi2015bayesian,cockayne2019bayesian}.

\subsection{The finite dimensional problem}
\label{sec: The finite dimensional problem}
The next step is to transform \eqref{running-example-optimization-problem} into a finite-dimensional problem for computation. We first introduce some notations:
\begin{itemize}[leftmargin=*]
    \item Notations for \textit{measurements}: We denote the measurement functions by
\[\phi^{(1)}_m=\updelta_{\vx_m}, 1\leq m \leq M
  \quad \text{and}\quad \phi^{(2)}_m = \updelta_{\vx_m} \circ \Delta, 1\leq m \leq M_{\Omega}\, ,\]
  where $\updelta_{\vx}$ is the
  Dirac delta function centered at $\vx$. They are in $\mU^*$, the dual space of $\mU$, for sufficiently regular kernel functions. 
  
  Further, we use the shorthand notation
  $\bphi^{(1)}$ and $\bphi^{(2)}$ for the $M$ and $M_\Omega$-dimensional vectors with entries $\phi^{(1)}_m$ and $\phi^{(2)}_m$
  respectively,
  and  $\bphi$  for the $N$-dimensional vector  obtained by concatenating $\bphi^{(1)}$ and $\bphi^{(2)}$, where $N = M+M_{\Omega}$. 
  \item Notations for \textit{primal dual pairing}: We use $[\cdot, \cdot]$ to denote the primal dual pairing, such that for $u \in \mU, \phi^{(1)}_m = \updelta_{\vx_m} \in \mU^*$, it holds that $[u, \phi^{(1)}_m] = u(\vx_m)$. Similarly $[u, \phi^{(2)}_m] = \Delta u(\vx_m)$ for $\phi^{(2)}_m = \updelta_{\vx_m} \circ \Delta \in \mU^*$. For simplicity of presentation, we oftentimes abuse the notation to write the primal-dual pairing in the $L^2$ integral form: $[u,\phi] = \int u(\vx)\phi(\vx) \, \dd \vx$.
  \item Notations for \textit{kernel matrices}:  We write $K(\bphi,\bphi)$ as the $N\times N$-matrix with entries
$\int K(\vx,\vx')\phi_m(\vx)\phi_j(\vx')\,\dd \vx\, \dd \vx'$ where $\phi_m$ denotes the
entries of $\bphi$. Here, the integral notation shall be interpreted as the primal-dual pairing as above.

Similarly, $K(\vx, \bphi)$ is the $N$ dimensional vector with entries $\int K(\vx,\vx')\phi_j(\vx')\, \dd \vx'$.
Moreover, we adopt the convention that if the variable inside a function is a set, it means that this function is applied to every element in this set; the output will be a vector or a matrix. As an example, $K(\vx_{\Omega},\vx_{\Omega}) \in \bR^{M_{\Omega}\times M_{\Omega}}$.
\end{itemize}
Then, based on a generalization of the representer theorem \cite{chen2021solving}, the minimizer of \eqref{running-example-optimization-problem} attains the form \[u^\dagger(\vx) = K(\vx, \bphi) K(\bphi, \bphi)^{-1} \vz^\dagger\, ,\] where $\vz^\dagger$ is the solution to the following finite dimensional quadratic optimization problem with nonlinear constraints
\begin{equation}
 \label{eqn: finite dim optimization for elliptic eqn}
   \left\{
   \begin{aligned}
  & \minimize_{\bz \in \R^{M + M_\Omega}}  \quad   \bz^T  K(\bphi,\bphi)^{-1}\bz \\
  &\st  \quad  -z^{(2)}_m + \tau(z^{(1)}_m) = f(\vx_m),  &&\text{for } m=1,\ldots,\Md\,, \\
  & \hspace{6ex} z^{(1)}_m=g(\vx_m), &&\text{for } m=\Md+1,\ldots, M\,.
\end{aligned}
\right.
  \end{equation}
Here, $\bz^{(1)} \in \R^{M}$, $\bz^{(2)} \in \R^{M_{\Omega}}$ and $\bz$ is the concatenation of them. For this specific example, we can write down $K(\vx, \bphi)$ and $K(\bphi,\bphi)$ explicitly:
 \begin{equation}
 \label{eqn: example of kernel matrices and vectors}
\begin{aligned}
& K(\vx, \bphi) = \left(K(\vx,\vx_{\Omega}), K(\vx,\vx_{\partial\Omega}), \Delta_{\vy} K(\vx,\vx_{\Omega})\right) \in \bR^{1\times N}\, ,\\
    &K(\bphi, \bphi) =
    \begin{pmatrix}
    K(\vx_{\Omega},\vx_{\Omega}) & K(\vx_{\Omega},\vx_{\partial\Omega}) & \Delta_{\vy} K(\vx_{\Omega},\vx_{\Omega}) \\
    K(\vx_{\partial\Omega},\vx_{\Omega}) & K(\vx_{\partial\Omega},\vx_{\partial\Omega}) & \Delta_{\vy} K(\vx_{\partial\Omega},\vx_{\Omega})\\
    \Delta_{\vx} K(\vx_{\Omega},\vx_{\Omega}) & \Delta_{\vx} K(\vx_{\Omega},\vx_{\partial\Omega}) & \Delta_{\vx}\Delta_{\vy}K(\vx_{\Omega},\vx_{\Omega}) 
    \end{pmatrix}\in \bR^{N\times N}\, .
\end{aligned}
\end{equation}
Here, $\Delta_{\vx},\Delta_{\vy}$ are the Laplacian operator for the first and second arguments of $k$, respectively. Clearly, evaluating the loss function and its gradient requires us to deal with the dense kernel matrix $K(\bphi, \bphi)$ with entries comprising \textit{derivatives} of $k$.

\subsection{The general case}
For general PDEs, the methodology leads to the optimization problem
\begin{equation*}
    \left\{
      \begin{aligned}
        & \min_{u \in \mU}~  && \| u\| \\
        & \mathrm{s.t.}  && \text{PDE constraints at $\{\vx_1, \dots, \vx_M\} \in \overline{\Omega}$}\, , 
      \end{aligned}
      \right.
  \end{equation*}
and the equivalent finite dimensional problem
\begin{equation}
\label{eqn: general opt for general nonlinear PDEs}
    \left\{
      \begin{aligned}
  &\min_{\vz \in \bR^N} && \vz^T K(\bphi, \bphi)^{-1} \vz \\
  &\mathrm{s.t.}  && { F(\vz)} = { \vy}\, ,
\end{aligned}
\right. 
\end{equation}
where $\bphi$ is the concatenation of Diracs measurements and derivative measurements of $u$; they are induced by the PDE at the sampled points. The function $F$ encodes the PDE, and the vector $\vy$ encodes the right hand side and boundary data. Again, it is clear that the computational bottleneck lies in the part $K(\bphi,\bphi)^{-1}$.

\begin{newremark}
\label{remark: def of derivative measurements}
    Here, we use ``derivative measurement'' to mean a functional in $\mU^*$ whose action on a function in $\mU$ leads to a linear combination of its derivatives. Mathematically, suppose the highest order of derivatives is $J$, then the corresponding derivative measurement at point $\vx_m$ can be written as $\phi = \sum_{|\gamma|\leq J} {a_\gamma} \updelta_{\vx_m} \circ D^{\gamma}$ with the multi-index $\gamma = (\gamma_1,...,\gamma_d) \in \bN^d$ and $|\gamma|:=\sum_{k=1}^d \gamma_k \leq J$. Here $D^{\gamma}:=D^{\gamma_1}_{\vx^{(1)}}\cdots D^{\gamma_d}_{\vx^{(d)}}$ is a $|\gamma|$-th order differential operator, and we use the notation $\vx = (\vx^{(1)},..., \vx^{(d)})$. We require linear independence between these measurements to ensure $K(\bphi, \bphi)$ is invertible.
\end{newremark}
\section{The sparse Cholesky factorization algorithm}
\label{sec: The sparse Cholesky factorization algorithm}
In this section, we present a sparse Cholesky factorization algorithm for $K(\bphi,\bphi)^{-1}$.
Theoretical results will be presented in Section \ref{sec: the theory} based on the interplay between linear algebra, Gaussian process conditioning, screening effects in spatial statistics, and numerical homogenization.

In Subsection \ref{sec: Previous results for derivative-free measurements}, we summarize the state-of-the-art sparse Cholesky factorization algorithm for kernel matrices with derivative-free measurements. In Subsection \ref{sec: The case of derivative-type measurements}, we discuss an extension of the idea to kernel matrices with derivative-type measurements, which are the main focus of this paper. The algorithm presented in Subsection \ref{sec: The case of derivative-type measurements} leads to near-linear complexity evaluation of the loss function and its gradient in the GP method for solving PDEs. First-order methods thus become scalable. We then extend the algorithm to second-order optimization methods (e.g., the Gauss-Newton method) in Section \ref{sec: Gauss-Newton and preconditioned conjugate gradient}.

\subsection{The case of derivative-free measurements}
\label{sec: Previous results for derivative-free measurements}
We start the discussion with the case where $\bphi$ contains Diracs-type measurements only.

Consider a set of points $\{\vx_i\}_{i\in I} \subset \Omega$, where $I = \{1,2,...,M\}$ as in Subsection \ref{sec: The GP framework}. We assume the points are \textit{scattered}; to quantify this, we have the following definition of homogeneity:
\begin{definition}
    The homogeneity parameter of the points $\{\vx_i\}_{i\in I} \subset \Omega$ conditioned on a set $\sfA$ is defined as
\[\delta(\{\vx_i\}_{i\in I}; \sfA) = \frac{\min_{\vx_i\neq\vx_j \in I} \mathrm{dist}(\vx_i,\{\vx_j\}\cup \sfA)}{\max_{\vx\in\Omega} \mathrm{dist}(\vx, \{\vx_i\}_{i\in I} \cup \sfA)} \, .  \]
When $\sfA = \emptyset$, we also write $\delta(\{\vx_i\}_{i\in I}) := \delta(\{\vx_i\}_{i\in I}; \emptyset)$.
\end{definition}
Throughout this paper, we assume $\delta(\{\vx_i\}_{i\in I}) > 0$. One can understand that a larger $\delta(\{\vx_i\}_{i\in I})$ makes the distribution of points more homogeneous in space. It can also be useful to consider $\sfA = \partial \Omega$ if one wants the points not too close to the boundary.

Let $\bphi$ be the collection of $\updelta_{\vx_i}, 1\leq i \leq M$; all of them are Diracs-type measurements and thus derivative-free. In \cite{schafer2021sparse}, a sparse Choleksy factorization algorithm was proposed to factorize $K(\bphi,\bphi)^{-1}$. We summarize this algorithm (with a slight modification\footnote{The method in \cite{schafer2021sparse} was presented to get the lower triangular Cholesky factors. Our paper presents the method for solving the upper triangular Cholesky factors since it gives a more concise description. As a consequence of this difference, in the reordering step, we are led to a reversed ordering compared to that in \cite{schafer2021sparse}.}) in the following three steps: reordering, sparsity pattern, and Kullback-Leibler (KL) minimization.
\subsubsection{Reordering} The first step is to reorder these points from \textit{coarse to fine} scales. It can be achieved by the maximum-minimum distance ordering (maximin ordering) \cite{guinness2018permutation}. We define a generalization to conditioned maximin ordering as follows:
\begin{definition}[Conditioned Maximin Ordering]
\label{def: maximin ordering}
   The maximin ordering conditioned on a set $\sfA$ for points $\{\vx_i, i\in I\}$ is obtained by successively selecting the point $\vx_i$ that is furthest away from $\sfA$ and the already picked points. If $\sfA$ is an empty set, then we select an arbitrary index $i \in I$ as the first to start. Otherwise, we choose the first index as
\[i_1 = {\arg \max}_{i\in I} \operatorname{dist}(\vx_i, \sfA)\, . \]
For the first $q$ indices already chosen, we choose
\[i_{q+1} = {\arg \max}_{i\in I \backslash \{i_1,...,i_q\} } \operatorname{dist}(\vx_i, \{\vx_{i_1},...,\vx_{i_q}\} \cup \sfA)\, . \]
\end{definition}
Usually we set $\sfA = \partial \Omega$ or $\emptyset$.
We introduce the operator $P: I \to I$ to map the order of the measurements to the index of the corresponding points, i.e., $P(q) = i_q$. 
One can define the \textit{lengthscale} of each ordered point as 
\begin{equation}
    l_i = \mathrm{dist}(\vx_{P(i)}, \{\vx_{P(1)},...,\vx_{P(i-1)}\} \cup \sfA)\, .
\end{equation}
Let $\Theta = K(\tilde{\bphi},\tilde{\bphi}) \in \bR^{N\times N}$ be the kernel matrix after reordering the measurements in $\bphi$ to $\tilde{\bphi} = (\phi_{P(1)},...,\phi_{P(M)})$; we have $N=M$ in this setting. An important observation is that the Cholesky factors of $\Theta$ and $\Theta^{-1}$ could exhibit \textit{near-sparsity} under the maximin ordering. Indeed, as an example, suppose $\Theta^{-1} = U^\star {U^\star}^T$ where $U^\star$ is the upper Cholesky factor. Then in Figure \ref{fig:screening effcts, diracs}, we show the magnitude of $U^\star_{ij}, i\leq j$ for a Mat\'ern kernel, where $j=1000, 2000$; the total number of points is $M=51^2$. It is clear from the figure that the entries decay very fast when the points move far away from the current $j$th ordered point.
\begin{figure}[ht]
    \centering
    \includegraphics[width=6cm]{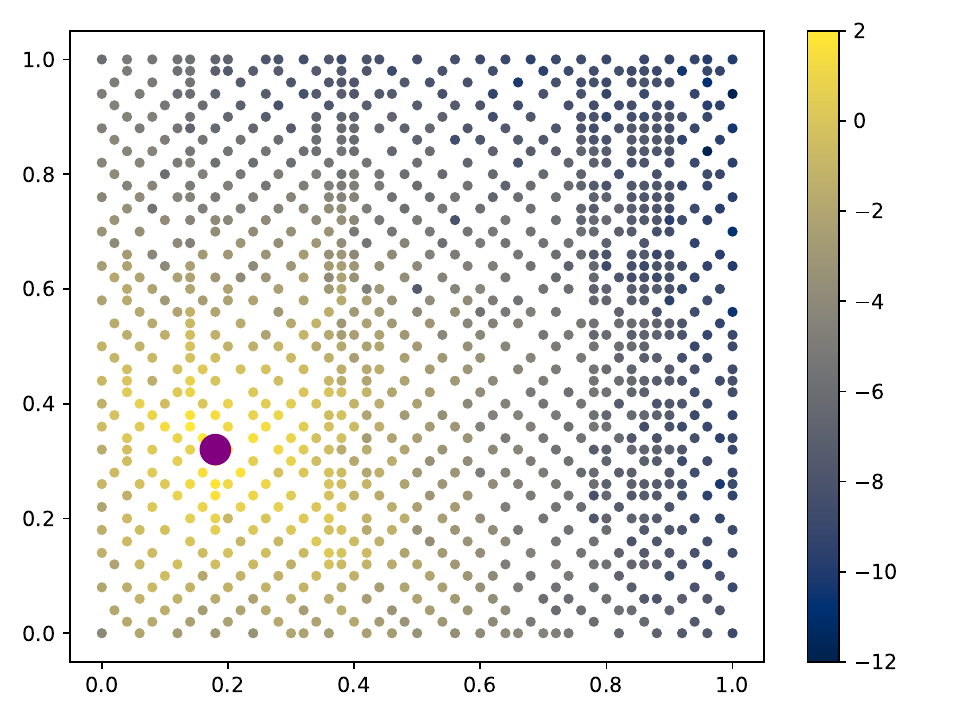}
    \includegraphics[width=6cm]{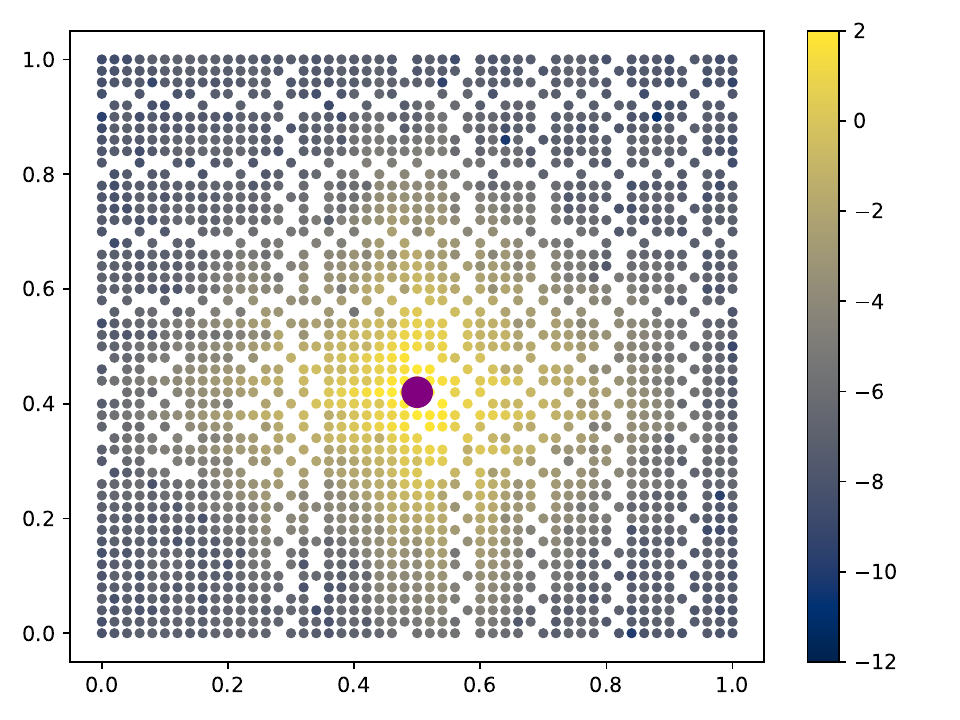}
    \caption{Demonstration of screening effects in the context of Diracs measurements using the Mat\'ern kernel with $\nu=5/2$ and lengthscale $0.3$. The data points are equidistributed in $[0,1]^2$ with grid size $h=0.02$. In the left figure, we display the $1000$th point (the big point) in the maximin ordering with $\sfA = \emptyset$, where all points ordered before it (i.e., $i<1000$) are colored with intensity according to the corresponding $|U_{ij}^\star|$. The right figure is generated in the same manner but for the $2000$th point in the ordering.}
    \label{fig:screening effcts, diracs}
\end{figure}

\begin{newremark}
\label{remark: why coarse-to-fine ordering}
One may wonder why such a coarse-to-fine reordering leads to sparse Cholesky factors. In fact, we can interpret entries of $U^\star$ as the conditional covariance of some GP. More precisely, consider the GP $\xi \sim \mathcal{GP}(0,K)$. Then, by definition, the Gaussian random variables $Y_i := [\xi, \tilde{\phi}_i] \sim \cN(0, K(\tilde{\phi}_i,\tilde{\phi}_i))$. We have the following relation:
\begin{equation}
\label{eqn: U as conditional GP}
\frac{U^\star_{ij}}{U^\star_{jj}} = (-1)^{i\neq j} \frac{\mathrm{Cov}[Y_i,Y_j|Y_{1:j-1\backslash \{i\}}]}{\mathrm{Var}[Y_i|Y_{1:j-1\backslash \{i\}}]}, \quad i \leq j\, . 
\end{equation}
Here we used the MATLAB notation such that $Y_{1:j-1\backslash \{i\}}$ corresponds to $\{Y_q: 1\leq q \leq j-1, q \neq i\}$. Proof of this formula can be found in Appendix \ref{appendix: Connections between Cholesky factors, conditional covariance, conditional expectation, and Gamblets}.


Formula \eqref{eqn: U as conditional GP} links the values of $U^\star$ to the conditional covariance of a GP. In spatial statistics, it is well-known from empirical evidence that conditioning a GP on coarse-scale measurements results in very small covariance values between finer-scale measurements. This phenomenon, known as \textit{screening effects}, has been discussed in works such as \cite{stein2002screening,stein20112010}. The implication is that conditioning on coarse scales \textit{screens out} fine-scale interactions.

As a result, one would expect the corresponding Cholesky factor to become sparse upon reordering. Indeed, the off-diagonal entries exhibit exponential decay. A rigorous proof of the quantitative decay can be found in \cite{schafer2021compression}, where the measurements consist of Diracs functionals only, and the kernel function is the Green function of some differential operator subject to Dirichlet boundary conditions.
The proof of Theorem 6.1 in \cite{schafer2021compression} effectively implies that 
\begin{equation}
\label{eqn: Diracs, decay of Uij}
    |U^\star_{ij}| \leq C_1 l_M^{C_2}\exp\left(-\frac{\operatorname{dist}(\vx_{P(i)},\vx_{P(j)})}{C_1 l_j}\right)
\end{equation} for some generic constants $C_1, C_2$ depending on the domain, kernel function, and homogeneity parameter of the points. We will prove such decay also holds when derivative-type measurements are included, in Section \ref{sec: the theory} under a novel ordering. It is worth mentioning that our analysis also provides a much simpler proof for \eqref{eqn: Diracs, decay of Uij}.
\end{newremark}
\subsubsection{Sparsity pattern} With the ordering determined, our next step is to identify the sparsity pattern of the Cholesky factor under the ordering. 

For a tuning parameter $\rho \in \mathbb{R}^{+}$, we select the upper-triangular sparsity set $S_{P,l,\rho} \subset I \times I$ as
\begin{equation}
    \label{eqn: def sparsity pattern}
    S_{P,l,\rho} = \{(i,j) \subset I \times I: i \leq j,\,  \operatorname{dist}(\vx_{P(i)},\vx_{P(j)}) \leq \rho l_j \}\, .
\end{equation}
The choice of the sparsity pattern is motivated by the quantitative exponential decay result mentioned in Remark \ref{remark: why coarse-to-fine ordering}. Here in the subscript, $P$ stands for the ordering, $l$ is the lengthscale parameter associated with the ordering, and $\rho$ is a hyperparameter that controls the size of the sparsity pattern. We sometimes drop the subscript to simplify the notation when there is no confusion. We note that the cardinality of the set $S_{P,l,\rho}$ is bounded by $O(N\rho^d)$, through a ball-packing argument (see Appendix \ref{appendix: Ball-packing arguments}).

\begin{newremark}
\label{rmk: complexity maximin ordering sparsity patterns}
    The maximin ordering and the sparsity pattern can be constructed with computational complexity $O(N\log^2(N)\rho^d)$ in time and $O(N\rho^d)$ in space; see Algorithm 4.1 and Theorem 4.1 in \cite{schafer2021sparse}.
\end{newremark}

\subsubsection{KL minimization}
\label{sec: KL minimization}
With the ordering and sparsity pattern identified, the last step is to use KL minimization to compute the best approximate sparse Cholesky factors given the pattern. 

Define the set of sparse upper-triangular matrices with sparsity pattern $S_{P,l,\rho}$ as 
\begin{equation}
\label{eqn: sparse mtx set}
    \mathcal{S}^{\rm mtx}_{P,l,\rho}:=\{A \in \bR^{N\times N}: A_{ij}\neq 0 \Rightarrow (i,j) \in S_{P,l,\rho}\}\, .
\end{equation}
For each column $j$, denote $s_j = \{i: (i,j) \in S_{P,l,\rho}\}$. The cardinality of the set $s_j$ is denoted by $\# s_j$.

The KL minimization step seeks to find
\begin{equation}
\label{eqn: KL min}
    U = {\arg\min}_{\hat{U} \in \mathcal{S}^{\rm mtx}_{P,l,\rho}} \operatorname{KL}\left(\cN(0,\Theta)\parallel\cN(0,(\hat{U}\hat{U}^T)^{-1})\right)\, .
\end{equation}
It turns out that the above problem has an explicit solution
\begin{equation}
    \label{eqn: explicit formula for KL min}
    U_{s_j,j} = \frac{\Theta_{s_j,s_j}^{-1}\textbf{e}_{\# s_j}}{\sqrt{\textbf{e}_{\# s_j}^T\Theta_{s_j,s_j}^{-1}\textbf{e}_{\# s_j}}}\, ,
\end{equation}
where $\textbf{e}_{\# s_j}$ is a standard basis vector in $\mathbb{R}^{\#s_j}$ with the last entry being $1$ and other entries equal $0$. Here, $\Theta_{s_j,s_j}^{-1} := (\Theta_{s_j,s_j})^{-1}$ \blue{where $\Theta_{s_j,s_j} \in \mathbb{R}^{(\# s_j) \times (\# s_j)}$ is a submatrix of $\Theta$ with index set $s_j$}. The proof of this explicit formula follows a similar approach to that of Theorem 2.1 in \cite{schafer2021sparse}, with the only difference being the use of upper Cholesky factors. A detailed proof is provided in Appendix \ref{appendix: Explicit formula for the KL minimization}. It is worth noting that the optimal solution is equivalent to the Vecchia approximation used in spatial statistics; see discussions in \cite{schafer2021sparse}.

With the KL minimization, we can find the best approximation measured in the KL divergence sense, given the sparsity pattern. The computation is embarrassingly parallel, noting that the formula \eqref{eqn: explicit formula for KL min} are independent for different columns. 

\begin{newremark}
    
    For the algorithm described above, the computational complexity is upper-bounded by $O(\sum_{1\leq i \leq N} (\# s_j)^3)$ in time and $O(\#S + \max_{1\leq i \leq N} (\# s_j)^2)$ in space when using dense Cholesky factorization to invert $\Theta_{s_j,s_j}$.

When using the sparsity pattern $S_{P,l,\rho}$, we can obtain $\#s_j = O(\rho^d)$ and $\#S = O(N\rho^d)$ via a ball-packing argument (see Appendix \ref{appendix: Ball-packing arguments}). This yields a complexity of $O(N\rho^{3d})$ in time and $O(N\rho^d)$ in space. 
\end{newremark}

\begin{newremark}
\label{rmk: derivative-free meas, supernodes, complexity}

The concept of \textit{supernodes} \cite{schafer2021sparse}, which relies on an extra parameter $\lambda$, can be utilized to group the sparsity pattern of nearby measurements and create an \textit{aggregate sparsity pattern $S_{P,l,\rho, \lambda}$}. This technique reduces computation redundancy and improves the arithmetic complexity of the KL minimization to $O(N\rho^{2d})$ in time (see Appendix \ref{appendix: Supernodes and aggregate sparsity pattern}). In this paper, we consistently employ this approach.
\end{newremark}
\begin{newremark}
    In \cite{schafer2021sparse}, it was shown in Theorem 3.4 that $\rho = O(\log(N/\epsilon))$ suffices to get an $\epsilon$-approximate factor for a large class of kernel matrices, so the complexity of the KL minimization is $O(N\log^{2d}(N/\epsilon))$ in time and $O(N\log^{d}(N/\epsilon))$ in space. Note that the ordering and aggregate sparsity pattern can be constructed in time complexity $O(N\log^2(N)\rho^d)$ and space complexity $O(N\rho^d)$; the complexity of this construction step is usually of a lower order compared to that of the KL minimization. Moreover, this step can be pre-computed. 
\end{newremark}
\subsection{The case of derivative measurements} The last subsection discusses the sparse Cholesky factorization algorithm for kernel matrices generated by derivative-free measurements. When using GPs to solve PDEs and inverse problems, $\bphi$ can contain derivative measurements, which are the main focus of this paper. This subsection aims to deal with such scenarios.
\label{sec: The case of derivative-type measurements}
\subsubsection{The nonlinear elliptic PDE case}


To begin with, we will consider the example in Subsection \ref{sec: The finite dimensional problem}, where we have Diracs measurements $\phi^{(1)}_m=\updelta_{\vx_m}$ for $1\leq m \leq M$, and Laplacian-type measurements $\phi^{(2)}_m = \updelta_{\vx_m} \circ \Delta$ for $1\leq m \leq M_{\Omega}$. Our objective is to extend the algorithm discussed in the previous subsection to include these derivative measurements.

An important question we must address is the ordering of these measurements. Specifically, should we consider the Diracs measurements before or after the derivative-type measurements?  To explore this question, we conduct the following experiment. First, we order all derivative-type measurements, $\phi^{(2)}_m$ for $1\leq m \leq M_{\Omega}$, in an arbitrary manner. We then follow this ordering with any Diracs measurement, labeled $M_{\Omega}+1$ in the order. For this measurement, we plot the magnitude of the corresponding Cholesky factor of $\Theta^{-1}$, i.e., $|U^\star_{ij}|$ for $i \leq j$ and $j = M_{\Omega}+1$, similar to the approach taken in Figure \ref{fig:screening effcts, diracs}. The results are shown in the left part of Figure \ref{fig:screening effcts, derivatives}.


Unfortunately, we do not observe an evident decay in the left of Figure \ref{fig:screening effcts, derivatives}. This may be due to the fact that, even when conditioned on the Laplacian-type measurements, the Diracs measurements can still exhibit long-range interactions with other measurements. This is because there are degrees of freedom of harmonic functions that are not captured by Laplacian-type measurements, and thus, the correlations may not be effectively screened out.

\begin{figure}[ht]
    \centering
    \includegraphics[width=6cm]{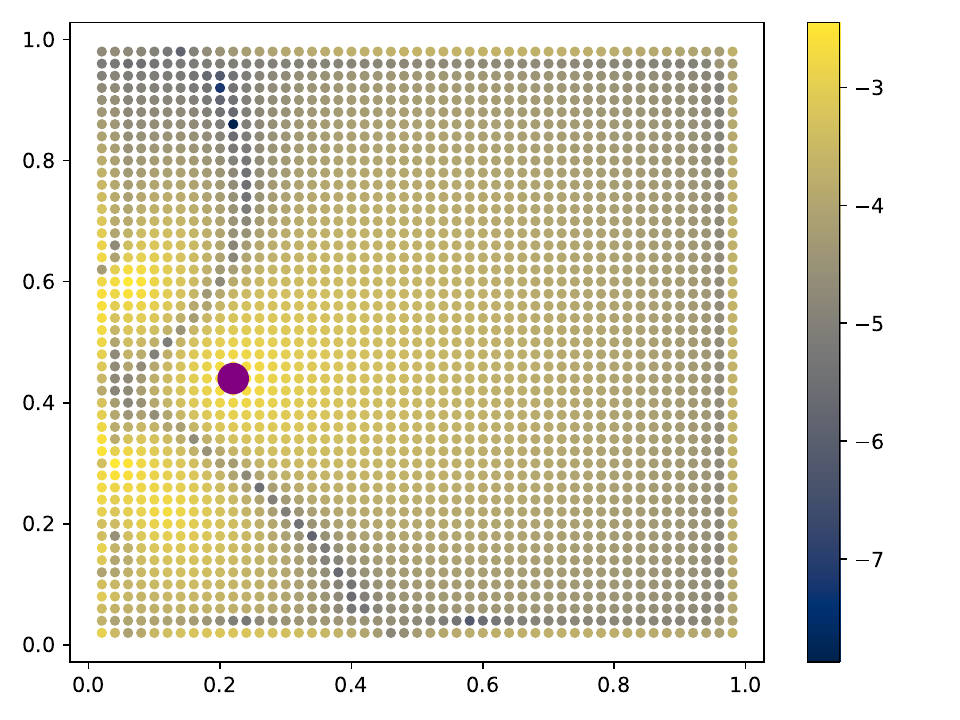}
    \includegraphics[width=6cm]{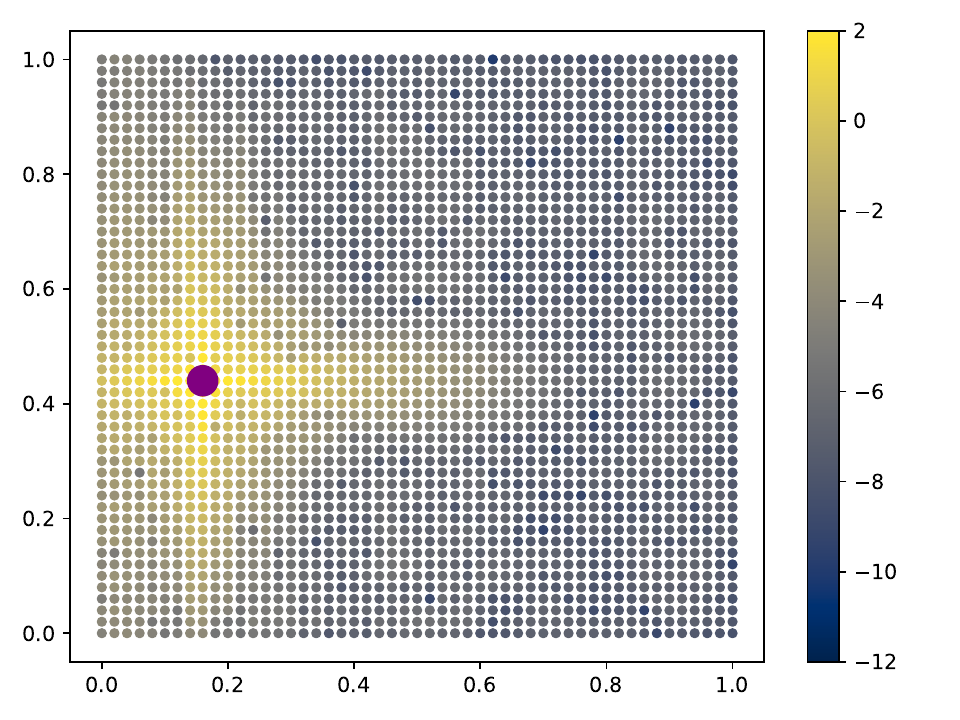}
    \caption{Demonstration of screening effects in the context of derivative-type measurements using the Mat\'ern kernel with $\nu = 5/2$ and lengthscale $0.3$. The data points are equidistributed in $[0,1]^2$ with grid size $h=0.02$. In the left figure, we order the Laplacian measurements first and then select a Diracs measurement which is the big point. The points are colored with intensity according to $|U_{ij}^\star|$. In the right figure, we order the Dircas measurements first and then select a Laplacian measurement; we display things in the same manner as the left figure.}
    \label{fig:screening effcts, derivatives}
\end{figure}


Alternatively, we can order the Dirac measurements first and then examine the same quantity as described above for any Laplacian measurement. This approach yields the right part of Figure \ref{fig:screening effcts, derivatives}, where we observe a fast decay as desired. This indicates that the derivative measurements should come after the Dirac measurements, or equivalently, that the derivative measurements should be treated as \textit{finer scales} compared to the pointwise measurements.

With the above observation, we can design our new ordering as follows. For the nonlinear elliptic PDE example in Subsection \ref{sec: The finite dimensional problem}, we order the Diracs measurements $\phi^{(1)}_m=\updelta_{\vx_m}, 1\leq m \leq M$ first using the maximin ordering with $\sfA = \emptyset$ mentioned earlier. Then, we add the derivative-type measurements $ \updelta_{\vx_m} \circ \Delta, 1\leq m \leq M_{\Omega}$ in arbitrary order to the ordering.

Again, for our notations, we use $P: I_N \to I$ to map the index of the ordered measurements to the index of the corresponding points. Here $I_N :=\{1,2,..., N\}$,  $N = M + M_{\Omega}$ and the cardinality of $I$ is $M$. We define the lengthscales of the ordered measurements to be
\begin{equation}
\label{eqn: lengthscale def, general PDE}
    l_i = 
    \begin{cases}
    \mathrm{dist}(\vx_{P(i)}, \{\vx_{P(1)},...,\vx_{P(i-1)}\} \cup \sfA) ,& \text{if } i \leq M,\\
    l_M,              & \text{otherwise.} \
\end{cases}
\end{equation}
We will justify the above choice of lengthscales in our theoretical study in Section \ref{sec: the theory}.

With the ordering and the lengthscales determined, we can apply the same steps in the last subsection to identify sparsity patterns:
\begin{equation}
    \label{eqn: sparsity pattern, general PDE}
    S_{P,l,\rho} = \{(i,j) \subset I_N \times I_N: i \leq j,\,  \operatorname{dist}(\vx_{P(i)},\vx_{P(j)}) \leq \rho l_j \} \subset I_N \times I_N\, .
\end{equation}
Then, we can use KL minimization as in Subsection \ref{sec: KL minimization} (see \eqref{eqn: sparse mtx set}, \eqref{eqn: KL min}, and \eqref{eqn: explicit formula for KL min}) to find the optimal sparse factors under the pattern. This leads to our sparse Cholesky factorization algorithm for kernel matrices with derivative-type measurements. 

\begin{newremark}
\label{remark: complexity of cholesky, derivative data, nonlinear elliptic pde eg}
    Similar to Remark \ref{rmk: derivative-free meas, supernodes, complexity}, the above KL minimization step (with the idea of supernodes to aggregate the sparsity pattern) can be implemented in time complexity $O(N\rho^{2d})$ and space complexity $O(N\rho^{d})$, for a parameter $\rho \in \mathbb{R}^+$ that determines the size of the sparsity set. 
\end{newremark}

We present some numerical experiments to demonstrate the accuracy of such an algorithm. In Figure \ref{fig:K phi phi accuracy}, we show the error measured in the KL divergence sense, namely $\operatorname{KL}\left(\cN(0,\Theta)\parallel\cN(0,(U^{\rho}{U^\rho}^T)^{-1})\right)$ where $U^{\rho}$ is the computed sparse factor. The figures show that the KL error decays exponentially fast regarding $\rho$. The rate is faster for less smooth kernels, and for the same kernel, the rate remains the same when there are more physical points. \blue{Note that for a fixed $\rho$, when the number of points increases, the KL error will increase. This effect is captured in our Theorem \ref{thm: cholesky factor decay}, and is expected since when we have more points, finer scale information is incorporated, the matrix becomes more ill-conditioned, and the approximation error will deteriorate. Moreover, Theorem \ref{thm: KL minimization accuracy} implies that once $\rho$ increases logarithmically regarding the number of points, we can achieve the same KL error. This is also evident from the right of Figure 3.
} In the left of Figure \ref{fig:K phi phi time and reduced kernel accuracy}, we show the CPU time of the algorithm, which scales nearly linearly regarding the number of points.
\begin{figure}[ht]
    \centering
    \includegraphics[width=6cm]{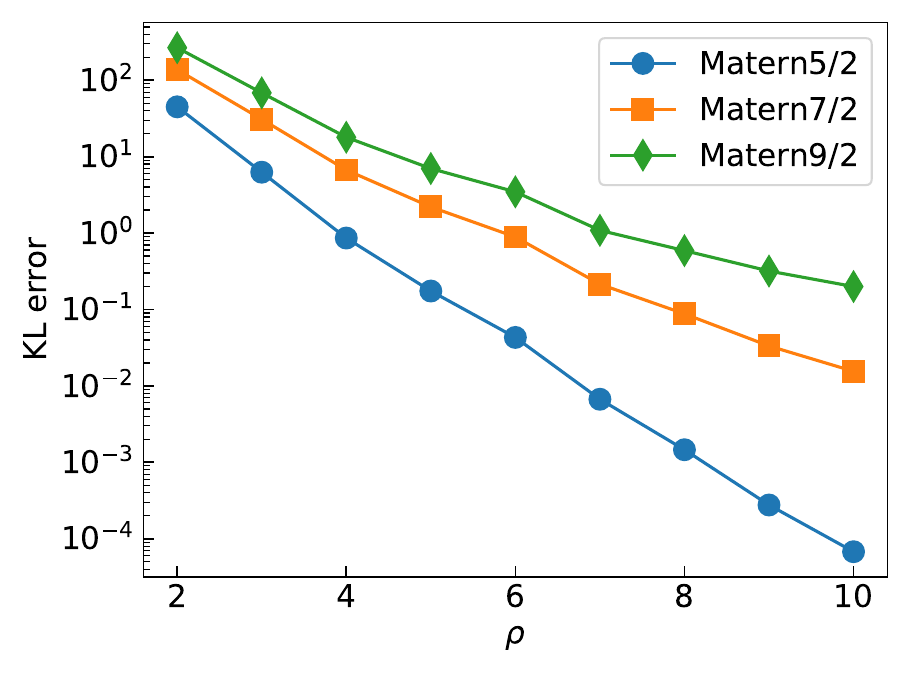}
    \includegraphics[width=6cm]{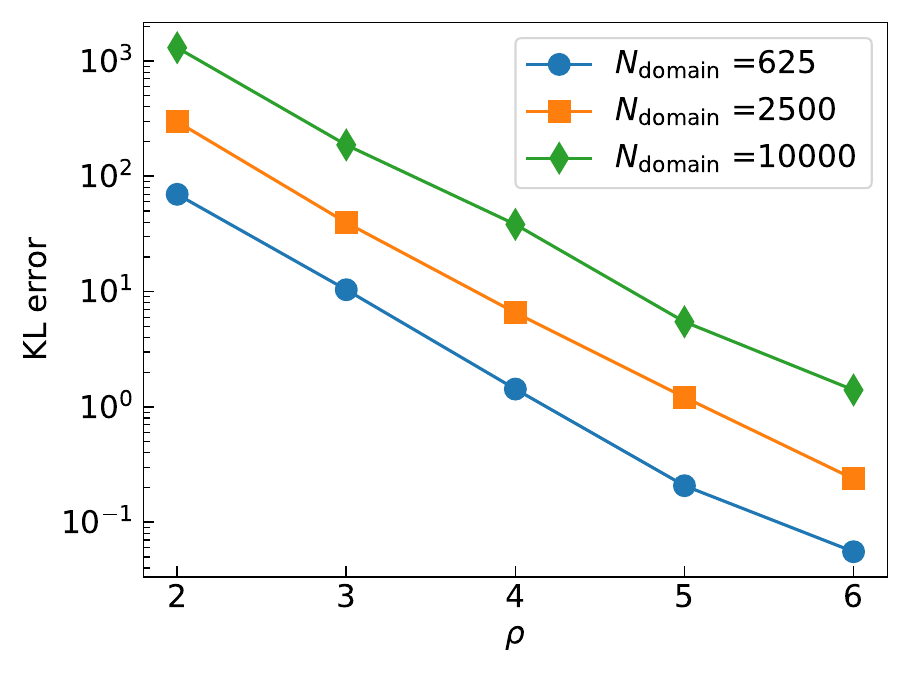}
    \caption{Demonstration of the accuracy of the sparse Cholesky factorization for $K(\bphi,\bphi)^{-1}$ in the nonlinear elliptic PDE example. In the left figure, we choose Mat\'ern kernels with $\nu = 5/2,7/2,9/2$ and lengthscale $l=0.3$; the physical points are fixed to be equidistributed in $[0,1]^2$ with grid size $h=0.05$; we plot the error measured in the KL sense with regard to different $\rho$. In the right figure, we fix the Mat\'ern kernels with $\nu = 5/2$ and lengthscale $l=0.3$. We vary the number of physical points, which are equidistributed with grid size $h=0.04, 0.02, 0.01$; thus $N_{\mathrm{domain}} = 625, 2500, 10000$ correspondingly.}
    \label{fig:K phi phi accuracy}
\end{figure}

\begin{figure}[ht]
    \centering
    \includegraphics[width=6cm]{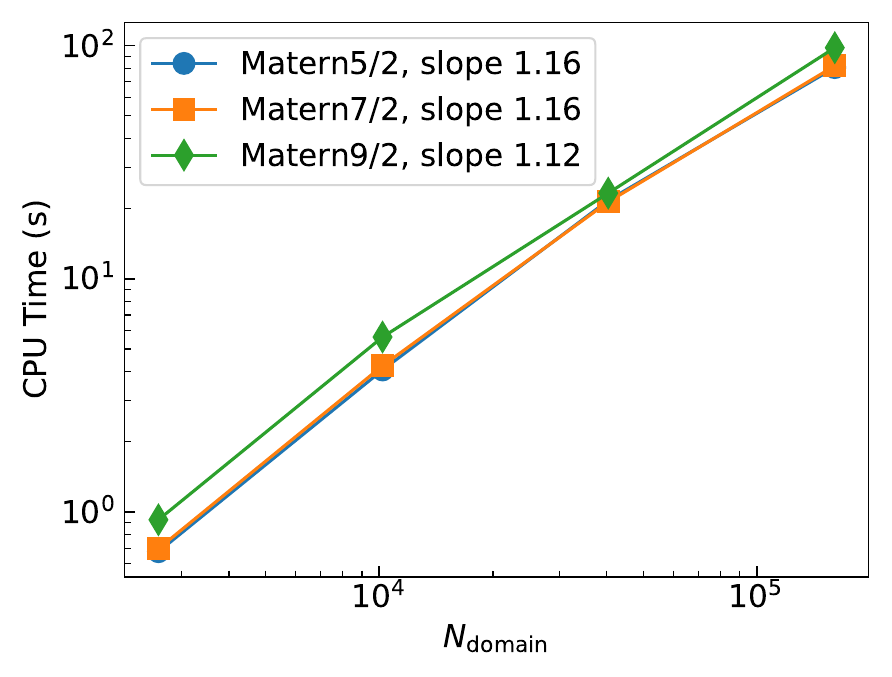}
    \includegraphics[width=6cm]{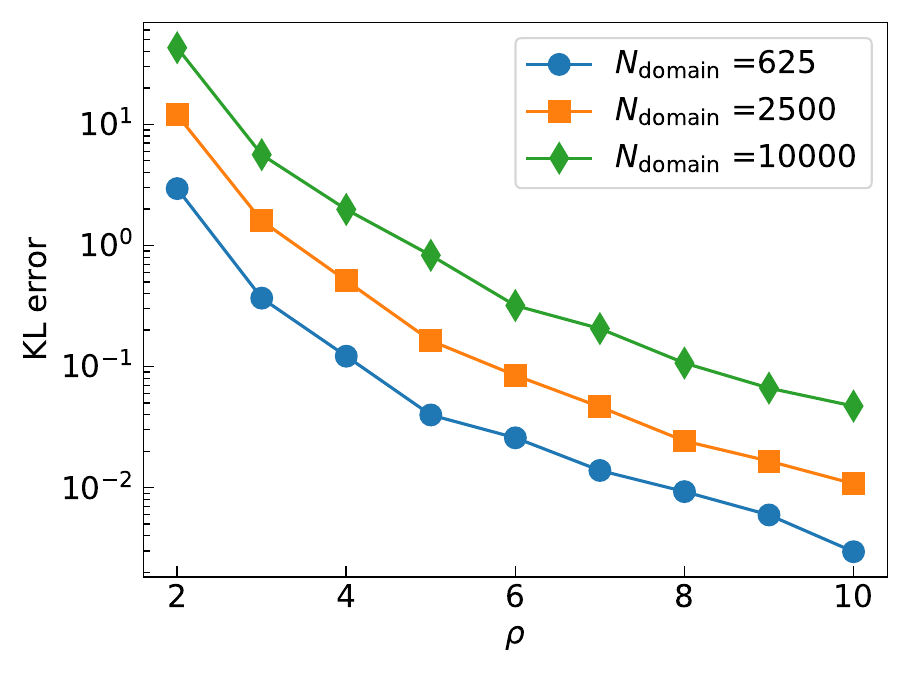}
    \caption{In the left figure, we choose Mat\'ern kernels with $\nu = 5/2,7/2,9/2$ and lengthscale $l=0.3$; the physical points are equidistributed in $[0,1]^2$ with different grid sizes; we plot the CPU time of the factorization algorithm for $K(\bphi,\bphi)^{-1}$, using the personal computer MacBook Pro 2.4 GHz Quad-Core Intel Core i5. In the right figure, we study the sparse Cholesky factorization for the reduced kernel matrix $K(\bphi^k, \bphi^k)^{-1}$. We fix the Mat\'ern kernels with $\nu = 5/2$ and lengthscale $l=0.3$. We vary the number of physical points, which are equidistributed with grid size $h=0.04, 0.02, 0.01$. We plot the KL error with regard to different $\rho$.}
    \label{fig:K phi phi time and reduced kernel accuracy}
\end{figure}

\subsubsection{General case} \label{sec: derivative measurements sparse cholesky factorization}
We present the algorithm discussed in the last subsection for general PDEs. In the general case \eqref{eqn: general opt for general nonlinear PDEs}, we need to deal with $K(\bphi, \bphi)$ where $\bphi$ is the concatenation of Diracs measurements and derivative-type measurements that are derived from the PDE. Suppose the number of physical points is $M$; they are $\{\vx_1,...,\vx_M\}$ and the index set is denoted by $I = \{1,...,M\}$. Without loss of generality, we can assume $\bphi$ contains Diracs measurements $\updelta_{\vx_m}$ at all these points and some derivative measurements at these points up to order $J \in \mathbb{N}$. See the definition of derivative measurements in Remark \ref{remark: def of derivative measurements}.
The reason we can assume Diracs measurements are in $\bphi$ is that one can always add $0u(\vx)$ to the PDE if there are no terms involving $u(\vx)$. The presence of these Diracs measurements is the key to get provable guarantee of the algorithm; for details see Section \ref{sec: the theory}.

Denote the total number of measurements by $N$, as before. We order the Diracs measurements $\updelta_{\vx_m}, 1\leq m \leq M$ first using the maximin ordering with $\sfA = \emptyset$. Then, we add the derivative-type measurements in an arbitrary order to the ordering.

Similar to the last subsection, we use the notation $P: I_N \to I$ to map the index of the ordered measurements to the index of the corresponding points; here $I_N :=\{1,2,..., N\}$. The lengthscales of the ordered measurements are defined via \eqref{eqn: lengthscale def, general PDE}.
With the ordering, one can identify the sparsity pattern as in \eqref{eqn: sparsity pattern, general PDE} (and aggregate it using supernodes as discussed in Remark \ref{rmk: derivative-free meas, supernodes, complexity}) and use the KL minimization \eqref{eqn: sparse mtx set}\eqref{eqn: KL min}\eqref{eqn: explicit formula for KL min} to compute $\epsilon$-approximate factors the same way as before. We outline the general algorithmic procedure in Algorithm \ref{alg:Sparse Cholesky for K bphi bphi}.

\begin{algorithm}
\caption{Sparse Cholesky factorization for $K(\bphi,\bphi)^{-1}$}
\label{alg:Sparse Cholesky for K bphi bphi}
\begin{algorithmic}[1]
\STATE{\textbf{Input}: Measurements $\bphi$, kernel function $K$, sparsity parameter $\rho$, supernodes parameter $\lambda$}
\STATE{\textbf{Output}: $U^{\rho}, P_{\rm perm} \ \mathrm{s.t.}\  K(\bphi,\bphi)^{-1} \approx P_{\rm perm}^TU^{\rho}{U^{\rho}}^TP_{\rm perm}$}
\STATE{Reordering and sparsity pattern: 
we first order the Diracs measurements using the maximin ordering. Next, we order the derivative measurements in an arbitrary order. This process yields a permutation matrix denoted by $P_{\rm perm}$, such that $P_{\rm perm}\bphi = \tilde{\bphi}$, and lengthscales $l$ for each measurement in $\tilde{\bphi}$. Under the ordering, we construct the aggregate sparsity pattern $S_{P,l,\rho, \lambda}$ based on the chosen values of $\rho$ and $\lambda$.
}
\STATE{KL minimization: solve \eqref{eqn: KL min} with $\Theta = K(\tilde{\bphi},\tilde{\bphi})$, by \eqref{eqn: explicit formula for KL min}, to obtain $U^{\rho}$}
\RETURN $U^{\rho}, P_{\rm perm}$
\end{algorithmic} \end{algorithm}

The complexity is of the same order as in Remark \ref{remark: complexity of cholesky, derivative data, nonlinear elliptic pde eg}; the hidden constant in the complexity estimate depends on $J$, the maximum order of the derivative measurements. We will present theoretical analysis for the approximation accuracy in Section \ref{sec: the theory}, which implies that $\rho = O(\log(N/\epsilon))$ suffices to provide $\epsilon$-approximation for a large class of kernel matrices.

\section{Theoretical study}
\label{sec: the theory}
In this section, we perform a theoretical study of the sparse Cholesky factorization algorithm in Subsection \ref{sec: The case of derivative-type measurements} for $K(\bphi,\bphi)^{-1}$.
\subsection{Set-up for rigorous results}
\label{sec: theory, set-up}
We present the setting of kernels, physical points, and measurements for which we will provide rigorous analysis of the algorithm.
\subsubsection*{Kernel}
We first describe the domains and the function spaces. Suppose $\Omega$ is a bounded convex domain in $\bR^d$ with a Lipschitz boundary. Without loss of generality, we assume $\mathrm{diam}(\Omega) \leq 1$; otherwise, we can scale the domain. Let $H_0^s(\Omega)$ be the Sobolev space in $\Omega$ with order $s \in \bN$ derivatives in $L^2$ and zero traces. Let the operator
\[\cL: H^s_0(\Omega) \to H^{-s}(\Omega)\]
satisfy Assumption \ref{assumption on operator}. This assumption is the same as in Section 2.2 of \cite{owhadi2019operator}.
\begin{assumption} The following conditions hold for $\cL: H^s_0(\Omega) \to H^{-s}(\Omega)$:
\label{assumption on operator}
\begin{enumerate}[label=(\roman*)]
    \item symmetry: $[u, \cL v] =[v, \cL u]$;
    \item positive definiteness: $[u, \cL u] > 0$ for $\|u\|_{H^s_0(\Omega)} > 0$;
    \item boundedness: \[\|\cL\|:=\sup_u \frac{\|\cL u\|_{H^{-s}(\Omega)}}{\|u\|_{H^s_0(\Omega)}} < \infty,\quad \|\cL^{-1}\|:=\sup_u \frac{\|\cL^{-1} u\|_{H^{s}_0(\Omega)}}{\|u\|_{H^{-s}(\Omega)}} < \infty\, ;\]
    \item locality: $[u, \cL v] = 0$ if $u$ and $v$ have disjoint supports.
\end{enumerate}
\end{assumption}
We assume $s>d/2$ so Sobolev's embedding theorem shows that $H^s_0(\Omega) \subset C(\Omega)$, and thus $\updelta_{\vx} \in H^{-s}(\Omega)$ for $\vx \in \Omega$. We consider the kernel function to be the Green function $K(\vx,\vy) := [\updelta_{\vx},  \mathcal{L}^{-1} \updelta_{\vy}]$. An example of $\cL$ could be $(-\Delta)^{s}$; we use the zero Dirichlet boundary condition to define $\cL^{-1}$, which leads to a Mat\'ern-like kernel.
\subsubsection*{Physical points}
Consider a scattered set of points $\{\vx_i\}_{i\in I} \subset \Omega$, where $I = \{1,2,...,M\}$ as in Subsection \ref{sec: Previous results for derivative-free measurements}; the homogeneity parameter of these points is assumed to be positive:
\[\delta(\{\vx_i\}_{i\in I}; \partial\Omega) = \frac{\min_{\vx_i\neq\vx_j \in I} \mathrm{dist}(\vx_i,\{\vx_j\}\cup \partial\Omega)}{\max_{\vx\in\Omega} \mathrm{dist}(\vx, \{\vx_i\}_{i\in I} \cup \partial \Omega)} > 0\, .  \]
This condition ensures that the points are scattered homogeneously. Here we set $\sfA = \partial\Omega$ since we consider zero Dirichlet's boundary condition and no points will be on the boundary. The accuracy in our theory will depend on $\delta(\{\vx_i\}_{i\in I}; \partial\Omega)$.
\subsubsection*{Measurements}
The setting is the same as in Subsection \ref{sec: derivative measurements sparse cholesky factorization}. We assume $\bphi$ contains Diracs measurements at \textit{all} of the scattered points, and it also contains derivative-type measurements at some of these points up to order $J \in \bN$. We require $J < s - d/2$ so that the Sobolev embedding theorem guarantees these derivative measurements are well-defined.

For simplicity of analysis, we assume all the measurements are of the type $\updelta_{\vx_i} \circ D^{\gamma}$ with the multi-index $\gamma = (\gamma_1,...,\gamma_d) \in \bN^d$ and $|\gamma|:=\sum_{k=1}^d \gamma_k \leq J$; here $D^{\gamma}=D^{\gamma_1}_{\vx^{(1)}}\cdots D^{\gamma_d}_{\vx^{(d)}}$; see Remark \ref{remark: def of derivative measurements}. Note that when $|\gamma|=0$, $\updelta_{\vx_i} \circ D^{\gamma}$ corresponds to Diracs measurements.
The total number of measurements is denoted by $N$. 

Note that the aforementioned assumption does not apply to the scenario of Laplacian measurements in the case of a nonlinear elliptic PDE example. This exclusion is solely for the purpose of proof convenience, as it necessitates linear independence of measurements. However, similar proofs can be applied to Laplacian measurements once linear independence between the measurements is ensured; \blue{see Remark \ref{appendix-laplace-measurements-proof}.}

\subsection{Theory}
Under the setting in Subsection \ref{sec: theory, set-up}, we consider the ordering $P:\{1,2,...,N\} \to \{1,2,...,M\}$ described in Subsection \ref{sec: derivative measurements sparse cholesky factorization}. Recall that for this $P$, we first order the Diracs measurements using the maximin ordering conditioned on $\partial\Omega$ (since there are no boundary points); then, we follow the ordering with an arbitrary order of the derivative measurements. The lengthscale parameters are defined via
\begin{equation*}
    l_i = 
    \begin{cases}
    \mathrm{dist}(\vx_{P(i)}, \{\vx_{P(1)},...,\vx_{P(i-1)}\} \cup \partial\Omega) ,& \text{if } i \leq M,\\
    l_M,              & \text{otherwise.} \
\end{cases}
\end{equation*}
We write $\Theta = K(\tilde{\bphi},\tilde{\bphi}) \in \bR^{N\times N} $, which is the kernel matrix after reordering the measurements in $\bphi$ to $\tilde{\bphi} = (\phi_{P(1)},...,\phi_{P(N)})$. 
\begin{theorem}
\label{thm: cholesky factor decay}
Under the setting in Subsection \ref{sec: theory, set-up} and the above given ordering $P$, we consider the upper triangular Cholesky factorization $\Theta^{-1}=U^\star {U^\star}^T$. Then, for $1\leq i\leq j \leq N$, we have
\[\left|\frac{U^\star_{ij}}{U^\star_{jj}}\right|  \leq C l_j^{-2s}\exp\left(-\frac{\mathrm{dist}(\vx_{P(i)}, \vx_{P(j)})}{Cl_j}\right)\ \ \mathrm{ and }\ \ |U^\star_{jj}|\leq Cl_M^{-s+d/2}\, ,\]
where $C$ is a generic constant that depends on $\Omega, \delta(\{\vx_i\}_{i\in I}; \partial\Omega), d, s, J, \|\cL\|, \|\cL^{-1}\|$.
\end{theorem}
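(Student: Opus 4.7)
The plan is to convert the claim into an estimate on conditional moments of the Gaussian process $\xi\sim\mathcal{GP}(0,K)$ via the identity stated in the remark preceding this subsection, and then analyze these moments through the operator-adapted basis (``gamblet'') framework of numerical homogenization. Writing $Y_k=[\xi,\tilde\phi_k]$, the identity $U^\star_{ij}/U^\star_{jj}=(-1)^{i\neq j}\mathrm{Cov}[Y_i,Y_j\mid Y_{1:j-1\backslash\{i\}}]/\mathrm{Var}[Y_i\mid Y_{1:j-1\backslash\{i\}}]$, together with $|U^\star_{jj}|^{2}=1/\mathrm{Var}[Y_j\mid Y_{1:j-1}]$, reduces the theorem to (a) a lower bound of order $l_M^{2s-d}$ on the conditional variances appearing in the denominators and (b) an exponentially small upper bound on the conditional covariance.

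Since $K$ is the Green function of $\cL$, the RKHS $\mcl{U}=H^s_0(\Omega)$ carries the inner product $\langle u,v\rangle_\mcl{U}=[u,\cL v]$, and standard RKHS duality rewrites both quantities in terms of the gamblet $\chi_i^{(j-1)}\in\mcl{U}$, defined as the minimizer of $\|\chi\|_\mcl{U}^{2}$ subject to $\tilde\phi_i(\chi)=1$ and $\tilde\phi_k(\chi)=0$ for $k\in\{1,\ldots,j-1\}\backslash\{i\}$. Concretely, $|U^\star_{ij}/U^\star_{jj}|=|\tilde\phi_j(\chi_i^{(j-1)})|$ and $|U^\star_{jj}|^{2}=\|\chi_j^{(j-1)}\|_\mcl{U}^{2}$. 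For the diagonal bound I would exploit energy-minimality: because the Diracs are ordered first by the conditioned maximin rule, the previously-chosen Dirac points form a quasi-uniform grid of spacing $\gtrsim l_M$, so a smooth bump of width $\sim l_M$ centered at $\vx_{P(j)}$, corrected to annihilate the finitely many Dirac and local derivative constraints sharing that physical point, is feasible and has $\mcl{U}$-norm $\lesssim l_M^{d/2-s}$ by Sobolev scaling (using $J<s-d/2$). This yields $|U^\star_{jj}|\leq Cl_M^{-s+d/2}$.

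The technical heart is the exponential decay of $\chi_i^{(j-1)}$ away from $\vx_{P(i)}$. Outside a neighborhood of that point, $\chi_i^{(j-1)}$ is $\cL$-harmonic apart from Lagrange-multiplier sources concentrated at the other measurement points, and it vanishes at every conditioned Dirac point. Because all Diracs precede the derivative measurements in our ordering, these zero-points form a homogeneous sampling at scale $l_M$ throughout $\Omega$. Combining the locality of $\cL$ (Assumption (iv)) with a Caccioppoli-type iteration on concentric annuli around $\vx_{P(i)}$ --- the same machinery used for the Diracs-only case in \cite{schafer2021compression} --- I expect to obtain $\|\chi_i^{(j-1)}\|_{\mcl{U}(\Omega\backslash B_R(\vx_{P(i)}))}\lesssim\exp(-R/(Cl_j))\|\chi_i^{(j-1)}\|_\mcl{U}$. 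A rescaled Sobolev embedding $H^s\hookrightarrow C^J$ then converts this energy estimate into a pointwise-plus-derivative bound on $|\tilde\phi_j(\chi_i^{(j-1)})|$, collecting the polynomial prefactor $l_j^{-2s}$ from the combined scalings of the rescaled embedding and of $\|\chi_i^{(j-1)}\|_\mcl{U}$.

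The principal obstacle is extending the Caccioppoli iteration from the purely-Dirac setting of prior work to the mixed setting with derivative conditioning. The conditional subspace $V_j=\{\chi:\tilde\phi_k(\chi)=0,\ k<j\}$ is now the intersection of Dirac kernels and higher-order jet conditions at the same physical points, and one must verify that these extra local constraints neither degrade the Poincar\'e/Caccioppoli constants in the induction nor create new obstructions to the construction of cut-off trial functions. I believe the argument still goes through, because the derivative constraints are strictly local to the physical points and therefore only act at scales below $l_M$, whereas the exponential decay is driven by the coarse Dirac sampling at scale $l_M$, which the ``Diracs-first'' ordering preserves intact regardless of how many derivative measurements are subsequently appended.
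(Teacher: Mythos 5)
Your proposal follows the same high-level route as the paper's proof: rewrite the Cholesky ratio $U^\star_{ij}/U^\star_{jj}$ via conditional GP moments, convert those moments into evaluations of the operator-adapted basis (gamblet) $\chi_i^{(j-1)}=\psi^i_j$ solving the constrained energy minimization problem, and then invoke exponential localization of gamblets with a Sobolev embedding to get the pointwise (and derivative) decay. Your identities $|U^\star_{ij}/U^\star_{jj}|=|\tilde\phi_j(\chi^{(j-1)}_i)|$ and $|U^\star_{jj}|^2=\|\chi^{(j-1)}_j\|_{\mcl U}^2$ are correct and are exactly what the paper derives in Appendix~\ref{appendix: Connections between Cholesky factors, conditional covariance, conditional expectation, and Gamblets} (via the conditional-expectation formula \eqref{eqn: conditional covariance to expectation}--\eqref{eqn: conditional expectation to gamblets}); you simply present this as RKHS representer duality rather than via the intermediate conditional-expectation formulation the paper emphasizes.

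Two secondary divergences are worth flagging. First, for the diagonal bound the paper does not construct a trial function directly: it deduces $|U^\star_{jj}|\le C l_M^{-s+d/2}$ from the global eigenvalue estimate $\Theta^{-1}\preceq C l_M^{-2s+d}\mathrm I$ of Proposition~\ref{prop: eigenvalue lower bounds on K phi phi} (whose proof, in turn, does use localized bumps through Fenchel duality, so the substance is similar to your competitor argument, but it is packaged as a matrix bound that is also reused in Theorem~\ref{thm: KL minimization accuracy}). Second, for the exponential localization you propose a Caccioppoli/hole-filling iteration in the style of M{\aa}lqvist--Peterseim; the paper deliberately avoids this route because, as noted in the discussion around Theorem~\ref{theorem: exp decay of Gamblets}, mass-chasing/Caccioppoli arguments are difficult to carry out for general higher-order $\cL: H^s_0\to H^{-s}$. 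Instead it uses the algebraic subspace-iteration framework of Owhadi--Scovel and reduces the decay estimate to verifying three quantitative conditions \eqref{eqn: decay cond1}--\eqref{eqn: decay cond3} (an approximation/Poincar\'e inequality on the conditioned subspace $\Phi^\perp$, a dual stability inequality for the measurement functionals, and a local inverse inequality). The technical content you identify as the ``principal obstacle'' — checking that adding derivative constraints does not spoil the relevant constants — is exactly the content of Proposition~\ref{prop: verify conditions of exp decay for derivative meas}, where the key point is that the derivative measurements are supported at the same points as the Diracs, so they only strengthen $\Phi^\perp$ for \eqref{eqn: decay cond1}, while \eqref{eqn: decay cond2} and \eqref{eqn: decay cond3} follow by rescaled Sobolev embedding (needing $J<s-d/2$) and linear independence of the local jet functionals, respectively. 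If you insist on the Caccioppoli route for general $s$, you will face the high-order versions of the hole-filling lemma; the algebraic conditions are the cleaner path here.
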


The proof for Theorem \ref{thm: cholesky factor decay} can be found in Appendix \ref{appendix: Proof of thm: cholesky factor decay}. The proof relies on the interplay between GP regression, linear algebra, and numerical homogenization. Specifically, we use \eqref{eqn: U as conditional GP} to represent the ratio $U_{ij}^\star/U_{jj}^\star$ as the normalized conditional covariance of a GP. Our technical innovation is to connect this normalized term to the conditional expectation of the GP, leading to the identity
\begin{equation*}
\left|\frac{U^\star_{ij}}{U^\star_{jj}}\right| = \left|\frac{\mathrm{Cov}[Y_i,Y_j|Y_{1:j-1\backslash {i}}]}{\mathrm{Var}[Y_i|Y_{1:j-1\backslash {i}}]}\right| = \left|\bE[Y_j|Y_i = 1, Y_{1:j-1\backslash {i}} = 0]\right|\, ,
\end{equation*}
where $Y \sim \cN(0,\Theta)$. This conditional expectation directly connects to the operator-valued wavelets, or \textit{Gamblets}, in the numerical homogenization literature \cite{owhadi2017multigrid,owhadi2019operator}. We can apply PDE tools to establish the decay result of Gamblets. Remarkably, the connection to the conditional expectation simplifies the analysis for general measurements, compared to the more lengthy proof based on exponential decay matrix algebra in \cite{schafer2021compression} for Diracs measurements only.

In our setting, we need additional analytic results regarding the derivative measurements to prove the exponential decay of the Gamblets, which is one of the technical contributions of this paper. Finally, for $|U_{jj}^\star|$, we obtain the estimate by bounding the lower and upper eigenvalues of $\Theta$. For details, see Appendix \ref{appendix: Proof of thm: cholesky factor decay} and \ref{prop: eigenvalue lower bounds on K phi phi}.

With Theorem \ref{thm: cholesky factor decay}, we can establish that $|U_{ij}^\star|$ is exponentially small when $(i,j)$ is outside the sparsity set $S_{P, l,\rho}$. This property enables us to show that the sparse Cholesky factorization algorithm leads to provably accurate sparse factors when $\rho = O(\log(N/\epsilon))$. See Theorem \ref{thm: KL minimization accuracy} for details.
\begin{theorem}
\label{thm: KL minimization accuracy}
Under the setting in Theorem \ref{thm: cholesky factor decay}, suppose $U^{\rho}$ is obtained by the KL minimization \eqref{eqn: KL min} with the sparsity parameter $\rho \in \bR^{+}$. Then, there exists a constant depending on $\Omega, \delta(\{\vx_i\}_{i\in I}; \partial\Omega), d, s, J, \|\cL\|,$ $\|\cL^{-1}\|$, such that if $\rho \geq C \log(N/\epsilon)$, we have
\begin{equation*}
    \operatorname{KL}\left(\cN(0,\Theta)\parallel\cN(0,(U^{\rho} {U^{\rho}}^T)^{-1})\right) + \|\Theta^{-1} - U^{\rho} {U^{\rho}}^T\|_{\mathrm{Fro}} + \|\Theta - (U^{\rho} {U^{\rho}}^T)^{-1}\|_{\mathrm{Fro}} \leq \epsilon\, ,
\end{equation*}
where $\epsilon<1$ and $\|\cdot\|_{\mathrm{Fro}}$ is the Frobenius norm. 
\end{theorem}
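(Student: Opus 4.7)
The plan is to upper bound $\operatorname{KL}(\cN(0,\Theta)\parallel\cN(0,(U^\rho{U^\rho}^T)^{-1}))$ by evaluating the KL at a concrete sparse candidate obtained by truncating $U^\star$ to the sparsity pattern, use Theorem \ref{thm: cholesky factor decay} to make the truncation exponentially small, and then translate the resulting KL bound into the two Frobenius bounds.

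First I would define $\tilde U$ by $\tilde U_{ij}=U^\star_{ij}$ for $(i,j)\in S_{P,l,\rho}$ and $\tilde U_{ij}=0$ otherwise; since all diagonal indices lie in $S_{P,l,\rho}$, $\tilde U$ inherits the positive diagonal of $U^\star$ and so $\tilde U\in\mathcal S_{P,l,\rho}$. By the optimality of $U^\rho$ in \eqref{eqn: KL min},
\begin{equation*}
\operatorname{KL}\bigl(\cN(0,\Theta)\parallel\cN(0,(U^\rho{U^\rho}^T)^{-1})\bigr)\leq\operatorname{KL}\bigl(\cN(0,\Theta)\parallel\cN(0,(\tilde U\tilde U^T)^{-1})\bigr).
\end{equation*}
To evaluate the right-hand side I would introduce $V:=(U^\star)^{-1}\tilde U=I+E$ with $E:=(U^\star)^{-1}(\tilde U-U^\star)$. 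Because $\tilde U-U^\star$ is strictly upper triangular (the diagonals agree) and $(U^\star)^{-1}$ is upper triangular, $E$ is strictly upper triangular and $\det V=1$. Using $\Theta=(U^\star)^{-T}(U^\star)^{-1}$ one checks $\tilde U\tilde U^T\Theta=U^\star VV^T(U^\star)^{-1}$, so $\operatorname{tr}(\tilde U\tilde U^T\Theta)=\|V\|_{\mathrm{Fro}}^2=N+\|E\|_{\mathrm{Fro}}^2$ and $\det(\tilde U\tilde U^T\Theta)=1$, and the KL collapses to the clean identity $\tfrac{1}{2}\|E\|_{\mathrm{Fro}}^2$.

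Next I would control $\|E\|_{\mathrm{Fro}}\leq\|(U^\star)^{-1}\|_{\mathrm{op}}\,\|\tilde U-U^\star\|_{\mathrm{Fro}}$. For $(i,j)\notin S_{P,l,\rho}$, Theorem \ref{thm: cholesky factor decay} together with the monotonicity $l_j\geq l_M$ yields
\begin{equation*}
|U^\star_{ij}|\leq|U^\star_{jj}|\,C l_j^{-2s}e^{-\rho/C}\leq C^2 l_M^{-3s+d/2}\,e^{-\rho/C}.
\end{equation*}
Since there are at most $N^2$ such entries, $\|\tilde U-U^\star\|_{\mathrm{Fro}}\leq CN\,l_M^{-3s+d/2}e^{-\rho/C}$. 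Combined with $\|(U^\star)^{-1}\|_{\mathrm{op}}^2=\lambda_{\max}(\Theta)\leq N\max_{ij}|\Theta_{ij}|$ (bounded since the Green function $K$ and its derivatives up to order $2J<2s-d$ are continuous on $\overline\Omega\times\overline\Omega$) and the homogeneity estimate $l_M^{-1}\leq CN^{1/d}$, this produces $\|E\|_{\mathrm{Fro}}^2\leq CN^{\alpha}e^{-2\rho/C}$ for some exponent $\alpha=\alpha(s,d,J)$. Taking $\rho\geq C'\log(N/\epsilon)$ with $C'$ sufficiently large then forces $\operatorname{KL}(U^\rho)\leq\epsilon^2/2$.

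Finally I would convert the KL bound into the two Frobenius bounds. With $X:=U^\rho{U^\rho}^T\Theta$, whose positive eigenvalues $\mu_i$ coincide with those of the symmetric positive $Y:=\Theta^{1/2}U^\rho{U^\rho}^T\Theta^{1/2}$, the identity $\operatorname{KL}=\tfrac{1}{2}\sum_i(\mu_i-1-\log\mu_i)$ combined with the elementary inequality $\mu-1-\log\mu\geq(\sqrt\mu-1)^2$ gives $\|Y^{1/2}-I\|_{\mathrm{Fro}}^2\leq 2\operatorname{KL}$. Factoring $Y-I=(Y^{1/2}-I)(Y^{1/2}+I)$ and using $\Theta^{-1}-U^\rho{U^\rho}^T=-\Theta^{-1/2}(Y-I)\Theta^{-1/2}$ then yields $\|\Theta^{-1}-U^\rho{U^\rho}^T\|_{\mathrm{Fro}}\leq\|\Theta^{-1}\|_{\mathrm{op}}(\sqrt{\|Y\|_{\mathrm{op}}}+1)\sqrt{2\operatorname{KL}}$, and an analogous manipulation from the other side controls $\|\Theta-(U^\rho{U^\rho}^T)^{-1}\|_{\mathrm{Fro}}$. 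Here $\|\Theta\|_{\mathrm{op}}$ is bounded above by $CN$ as above, while a Riesz-representation argument on the measurements (essentially Proposition \ref{prop: eigenvalue lower bounds on K phi phi}) gives a lower bound $\lambda_{\min}(\Theta)\geq cN^{-2s/d}$; enlarging $C'$ absorbs these polynomial factors. The main obstacle is precisely this bookkeeping: every polynomial factor in $N$ arising from $l_M^{-1}$, $\|\Theta\|_{\mathrm{op}}$, $\|\Theta^{-1}\|_{\mathrm{op}}$, and the entry counting must be simultaneously dominated by the single exponential $e^{-\rho/C}$, forcing the constant in $\rho\geq C\log(N/\epsilon)$ to depend on $s$, $d$, $J$, and the homogeneity parameter.
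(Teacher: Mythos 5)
Your plan is correct and leads to the stated result, but it is not the route the paper takes in several of its technical steps, so a comparison is worthwhile. Both proofs share the same skeleton: truncate $U^\star$ to the sparsity set to get a feasible competitor, then invoke KL optimality and translate between KL and Frobenius norms using the polynomial-in-$N$ eigenvalue bounds of Proposition \ref{prop: eigenvalue lower bounds on K phi phi} (with $l_M = O(N^{-1/d})$). The paper, however, \emph{imports a two-sided comparison inequality} (Lemma B.8 of \cite{schafer2021sparse}, reproduced as Lemma \ref{lemma: comparison between KL and Fro}) to pass from $\|\Theta^{-1}-M^\rho (M^\rho)^T\|_{\mathrm{Fro}}$ to a KL bound at the truncated factor and then back from the KL at the optimal factor to $\|\Theta^{-1}-U^\rho(U^\rho)^T\|_{\mathrm{Fro}}$. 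You instead replace both directions with self-contained calculations. Forward: the identity $\operatorname{KL}\left(\cN(0,\Theta)\parallel\cN(0,(\tilde U\tilde U^T)^{-1})\right)=\tfrac12\|E\|_{\mathrm{Fro}}^2$ with $E=(U^\star)^{-1}(\tilde U-U^\star)$ strictly upper triangular is an exact equality rather than an inequality-under-a-smallness-hypothesis, which is a genuine simplification and avoids having to first verify the threshold condition $\lambda_{\max}\|\cdot\|_{\mathrm{Fro}}\leq\eta$. Backward: the spectral inequality $\mu-1-\log\mu\geq(\sqrt{\mu}-1)^2$ applied to the eigenvalues of $Y=\Theta^{1/2}U^\rho(U^\rho)^T\Theta^{1/2}$ replays the essential content of the cited Lemma B.8. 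The only place where you are slightly more elliptical than you should be is in bounding $\|Y\|_{\mathrm{op}}$ and $\|Y^{-1}\|_{\mathrm{op}}$; these need to be justified (e.g.\ by noting that $\operatorname{KL}\leq 1$ bounds each $\mu_i-1-\log\mu_i$ by $2$, which confines every $\mu_i$ to a fixed compact interval away from $0$ and $\infty$), but this is the same smallness book-keeping the paper also relies on implicitly when invoking its lemma. What your approach buys is a proof that is entirely internal to the paper and exposes the KL at the truncated factor as exactly a Frobenius norm rather than merely bounding it; what the paper's approach buys is brevity by leaning on a previously published lemma.
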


The proof can be found in Appendix \ref{appendix: Proof of thm: KL minimization accuracy}. It is based on the KL optimality of $U^{\rho}$ and a comparison inequality between KL divergence and Frobenious norm shown in Lemma B.8 of \cite{schafer2021sparse}. 
\begin{newremark}
    Theorem \ref{thm: KL minimization accuracy} will still hold when the idea of supernodes in Remark \ref{rmk: derivative-free meas, supernodes, complexity} is used since it only makes the sparsity pattern larger.
\end{newremark}

\section{Second order optimization methods} 
\label{sec: Gauss-Newton and preconditioned conjugate gradient}
Using the algorithm in Subsection \ref{sec: The case of derivative-type measurements}, we get a sparse Cholesky factorization for $K(\bphi, \bphi)^{-1}$, and thus we have a fast evaluation of the loss function in \eqref{eqn: finite dim optimization for elliptic eqn} (and more generally in \eqref{eqn: general opt for general nonlinear PDEs}) and its gradient. Therefore, first-order methods can be implemented efficiently.

In \cite{chen2021solving}, a second-order Gauss-Newton algorithm is used to solve the optimization problem and is observed to converge very fast, typically in $3$ to $8$ iterations. In this subsection, we discuss how to make such a second-order method scalable based on the sparse Cholesky idea. As before, we first illustrate our ideas on the nonlinear elliptic PDE example \eqref{running-example-optimization-problem} and then describe the general algorithm.
\subsection{Gauss-Newton iterations} 
\label{sec: Gauss-Newton iterations} For the nonlinear elliptic PDE example, the optimization problem we need to solve is \eqref{eqn: finite dim optimization for elliptic eqn}. 
Using the equation $ z^{(2)}_m =\tau(z^{(1)}_m)-f(\bx_m)$ and the boundary data,
  we can eliminate $\bz^{(2)}$ and rewrite \eqref{eqn: finite dim optimization for elliptic eqn} as an unconstrained problem:
   \begin{equation}\label{running-example-unconstrained-optimization-problem}
     \minimize_{\bz^{(1)}_{\Omega} \in \R^{\Md}}~
     \big(\bz^{(1)}_{\Omega}, g(\bx_{\partial\Omega}), f(\bx_{\Omega})-\tau(\bz^{(1)}_\Omega)\big)
     \cc(\bphi,\bphi)^{-1}
     \begin{pmatrix}\bz^{(1)}_{\Omega} \\
       g(\bx_{\partial\Omega}) \\
       f(\bx_{\Omega})-\tau(\bz^{(1)}_{\Omega})\end{pmatrix},
   \end{equation}
where $\bz^{(1)}_{\Omega}$ denotes the $\Md$-dimensional vector of the $z_i$ for $i=1, \dots, \Md$ associated to the interior points $\bx_{\Omega}$ while $f(\bx_{\Omega}), g(\bx_{\partial\Omega})$ and $\tau(\bz^{(1)}_\Omega)$ are vectors obtained by applying the corresponding functions to entries of their input vectors. 
To be clear, the expression in \eqref{running-example-unconstrained-optimization-problem} represents a weighted least-squares optimization problem, and the transpose signs in the row vector multiplying the matrix have been suppressed for notational brevity. In \cite{chen2021solving}, a Gauss-Newton method has been proposed to solve this problem. This method linearizes the nonlinear function $\tau$ at the current iteration and solves the resulting quadratic optimization problem to obtain the next iterate.

In this paper, we present the Gauss-Newton algorithm in a slightly different but equivalent way that is more convenient for exposition. To that end, we consider the general formulation in \eqref{eqn: general opt for general nonlinear PDEs}. In the nonlinear elliptic PDE example, we have $\bphi = (\updelta_{\vx_{\Omega}}, \updelta_{\vx_{\partial \Omega}}, \updelta_{\vx_\Omega}\circ \Delta)$ where $\updelta_{\vx_{\Omega}}$ denotes the collection of Diracs measurements $(\updelta_{\vx_{1}}, ..., \updelta_{\vx_{M_{\Omega}}})$; the definition of $\updelta_{\vx_{\partial \Omega}}$ and $\updelta_{\vx_\Omega}\circ \Delta$ follows similarly. We also write correspondingly $\vz = (\vz_{\Omega},\vz_{\partial\Omega}, \vz_{\Omega}^{\Delta}) \in \bR^{N} $ with $N = M+M_{\Omega}$. Then $F(\vz) = (-\vz_{\Omega}^{\Delta} + \tau(\vz_{\Omega}), \vz_{\partial\Omega}) \in \bR^M$ and $\vy = (f(\vx_{\Omega}), g(\vx_{\partial\Omega})) \in \bR^M$, such that $F(\vz) = \vy$.

The Gauss-Netwon iterations for solving \eqref{running-example-unconstrained-optimization-problem} is equivalent to the following sequential quadratic programming approach for solving \eqref{eqn: general opt for general nonlinear PDEs}: for $k \in \bN$, assume $\vz^{k}$ obtained, then $\vz^{k+1}$ is given by
\begin{equation}
\label{eqn: linearize the constraint}
\begin{aligned}
    \vz^{k+1} = &\argmin_{\vz \in \bR^N} \quad  \vz^T K(\bphi, \bphi)^{-1} \vz \\
    &\mathrm{s.t.} \quad   { F(\vz^k) + DF(\vz^k)(\vz - \vz^k)} = { \vy}\, ,
\end{aligned}
\end{equation}
where $DF(\vz^k) \in \bR^{M\times N}$ is the Jacobian of $F$ at $\vz^k$. The above is a quadratic optimization with a linear constraint. Using Lagrangian multipliers, we get the explicit formula of the solution: $\vz^{k+1} = K(\bphi,\bphi)(DF(\vz^k))^T \gamma$, where $\gamma \in \bR^{M}$ solves the linear system
\begin{equation}
    \left(DF(\vz^k)K(\bphi,\bphi)(DF(\vz^k))^T\right)\gamma = \vy - F(\vz^k) + DF(\vz^k)\vz^k\, .
\end{equation}
Now, we introduce the \textit{reduced} set of measurements $\bphi^{k} = DF(\vz^k)\bphi$. For the nonlinear elliptic PDE, we have \[\bphi^{k} = (-\updelta_{\vx_\Omega}\circ \Delta + \tau(\vz_{\Omega}^k) \cdot \updelta_{\vx_\Omega}, \updelta_{\vx_{\partial\Omega}})\] where $\tau(\vz^k_{\Omega}) \cdot \updelta_{\vx_\Omega}:= (\tau(\vz_1^k)\updelta_{\vx_1}, ..., \tau(\vz^k_{M_{\Omega}})\updelta_{\vx_{M_{\Omega}}})$. Then, we can equivalently write the solution as $\vz^{k+1} = K(\bphi,\bphi)(DF(\vz^k))^T\gamma$ where $\gamma$ satisfies
\begin{equation}
\label{eqn: reduced mtx linear system}
    K(\bphi^k,\bphi^k)\gamma = \vy - F(\vz^k) + DF(\vz^k)\vz^k\, .
\end{equation}
Note that $K(\bphi^k,\bphi^k) \in \bR^{M\times M}$, in contrast to  $K(\bphi,\bphi) \in \bR^{N\times N}$. The dimension is reduced.
The computational bottleneck lies in 
the linear system with the reduced kernel matrix $K(\bphi^k,\bphi^k)$. 
\subsection{Sparse Cholesky factorization for the reduced kernel matrices} 
\label{sec: Sparse Cholesky factorization for the reduced kernel matrices}
As $K(\bphi^k,\bphi^k)$ is also a kernel matrix with derivative-type measurements, we hope to use the sparse Cholesky factorization idea to approximate its inverse. The first question, again, is how to order these measurements. 

To begin with, we examine the structure of the reduced kernel matrix. Note that as $F(\vz)=\vy$ encodes the PDE at the collocation points, the linearization of $F$ in \eqref{eqn: linearize the constraint} is also equivalent to first linearizing the PDE at the current solution and then applying the kernel method.  Thus, $\bphi^k$ will typically contain $M_{\Omega}$ interior measurements corresponding to the linearized PDE at the interior points and $M-M_{\Omega}$ boundary measurements corresponding to the sampled boundary condition. For problems with Dirichlet's boundary condition, which are the main focus of this paper, the boundary measurements are of Diracs type. It is worth noting that, in contrast to $K(\bphi,\bphi)$, we no longer have Diracs measurements at every interior point now.

We propose to order the boundary Diracs measurements first, using the maximin ordering on $\partial \Omega$. Then, we order the interior derivative-type measurements using the maximin ordering in $\Omega$, conditioned on $\partial \Omega$. We use numerical experiments to investigate the screening effects under such ordering. Suppose $\Theta$ is the reordered version of the reduced kernel matrix $K(\bphi^k,\bphi^k)$, then similar to Figures \ref{fig:screening effcts, diracs} and \ref{fig:screening effcts, derivatives}, we show the magnitude of the corresponding Cholesky factor of $\Theta^{-1}=U^\star {U^\star}^T$, i.e., we plot $|U^\star_{ij}|$ for $i \leq j$; here $j$ is selected to correspond to some boundary and interior points.  The result can be found in Figure \ref{fig:screening effcts, reduced mtx}.
\begin{figure}[ht]
    \centering
    \includegraphics[width=6cm]{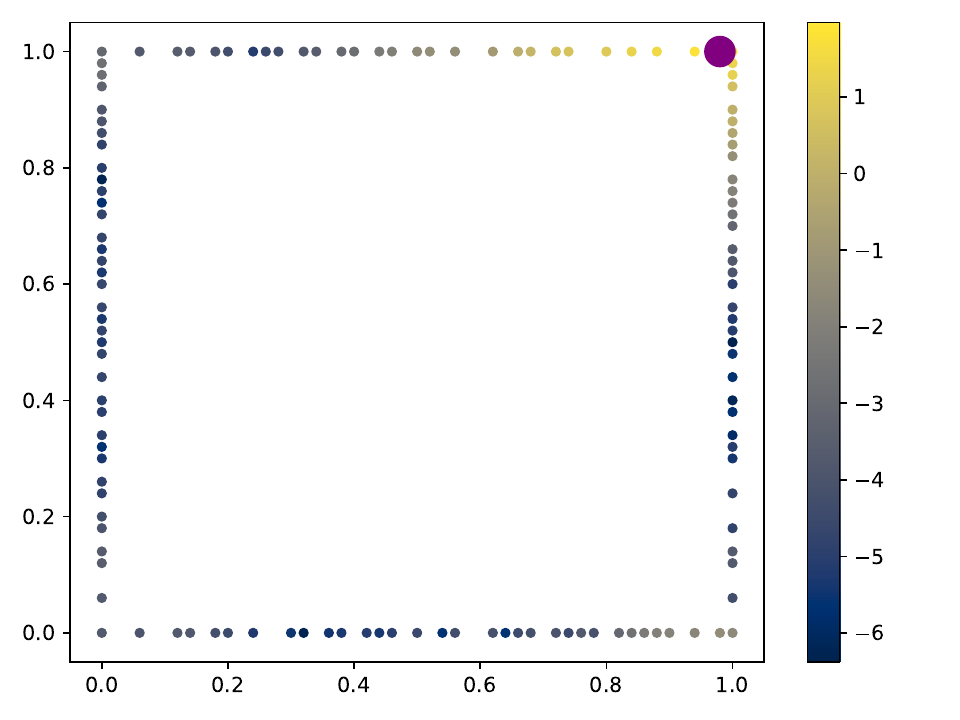}
    \includegraphics[width=6cm]{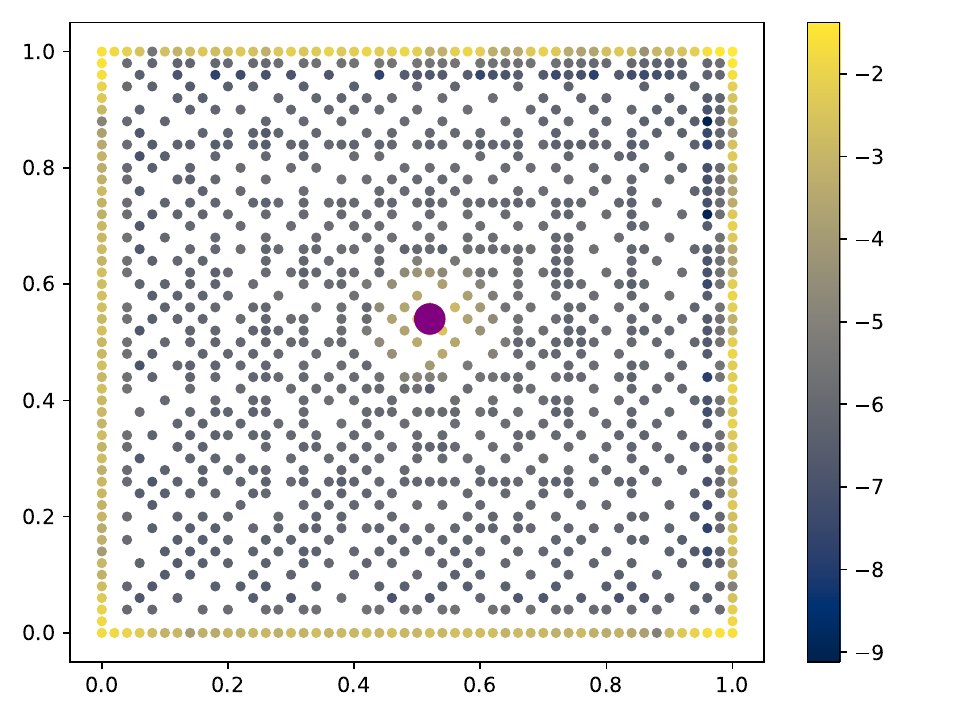}
    \caption{Demonstration of screening effects for the reduced kernel matrix. We choose the Mat\'ern kernel with $\nu = 5/2$; the lengthscale parameter is $0.3$. The data points are equidistributed in $[0,1]^2$ with grid size $h=0.02$. In the left figure, we show a boundary point, and all the points ordered before are marked with a color whose intensity scales with the entry value $|U_{ij}^\star|$. The right figure is obtained in the same manner but for an interior measurement.}
    \label{fig:screening effcts, reduced mtx}
\end{figure}

From the left figure, we observe a desired screening effect for boundary Diracs measurements. However, in the right figure, we observe that the interior derivative-type measurements still exhibit a strong conditional correlation with boundary points. That means that the correlation with boundary points is not screened thoroughly. This also implies that the presence of the interior Diracs measurements is the key to the sparse Choleksy factors for the previous $K(\tilde{\bphi},\tilde{\bphi})^{-1}$.

The right of Figure \ref{fig:screening effcts, reduced mtx} demonstrates a negative result: one cannot hope that the Cholesky factor of $\Theta^{-1}$ will be as sparse as before. However, algorithmically, we can still apply the sparse Cholesky factorization to the matrix.
We present numerical experiments to test the accuracy of such factorization. In the right of Figure \ref{fig:K phi phi time and reduced kernel accuracy}, we show the KL errors of the resulting factorization concerning the sparsity parameter $\rho$. Even though the screening effect is not perfect, as we discussed above, we still observe a consistent decay of the KL errors when $\rho$ increases.

In addition, although we cannot theoretically guarantee the factor is as accurate as before, we can use it as a preconditioner to solve linear systems involving $K(\bphi^k,\bphi^k)$. In practice, we observe that this idea works very well, and nearly constant steps of preconditioned conjugate gradient (pCG) iterations can lead to an accurate solution to \eqref{eqn: reduced mtx linear system}. As a demonstration, in Figure \ref{fig:pcg convergence history}, we show the pCG iteration history when the preconditioning idea is employed. The stopping criterion for pCG is that the relative tolerance is smaller than $2^{-26} \approx 10^{-8}$, which is the default criterion in Julia. From the figures, we can see that pCG usually converges in 10-40 steps, and this fast convergence is insensitive to the numbers of points. When $\rho$ is large, the factor is more accurate, and the preconditioner is better, leading to smaller number of required pCG steps. Among all the cases, the number of pCG steps required to reach the stopping criterion is of the same magnitude, and is not large.
\begin{figure}[ht]
    \centering
    \includegraphics[width=6cm]{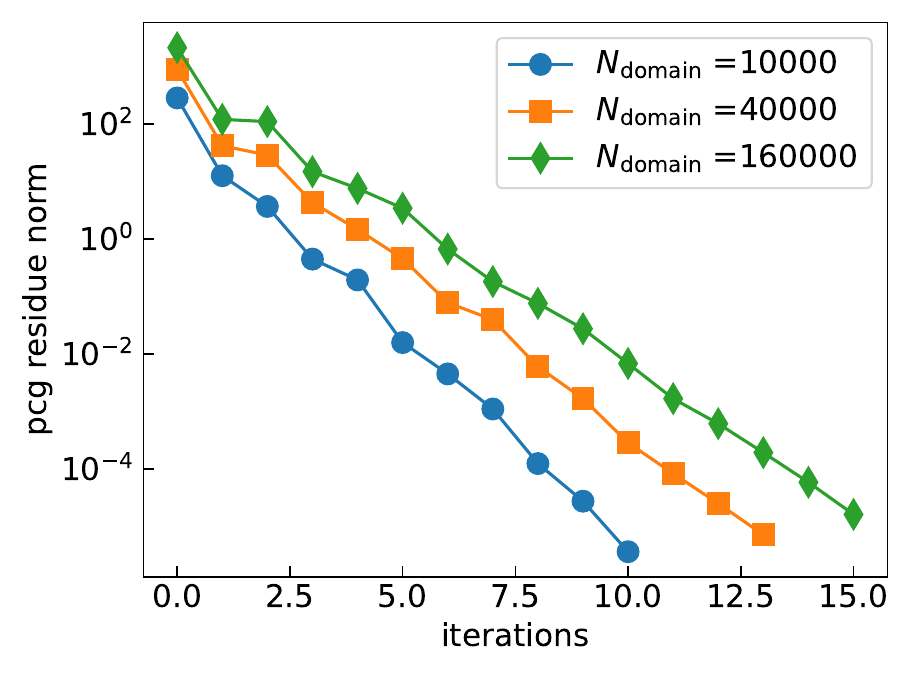}
    \includegraphics[width=6cm]{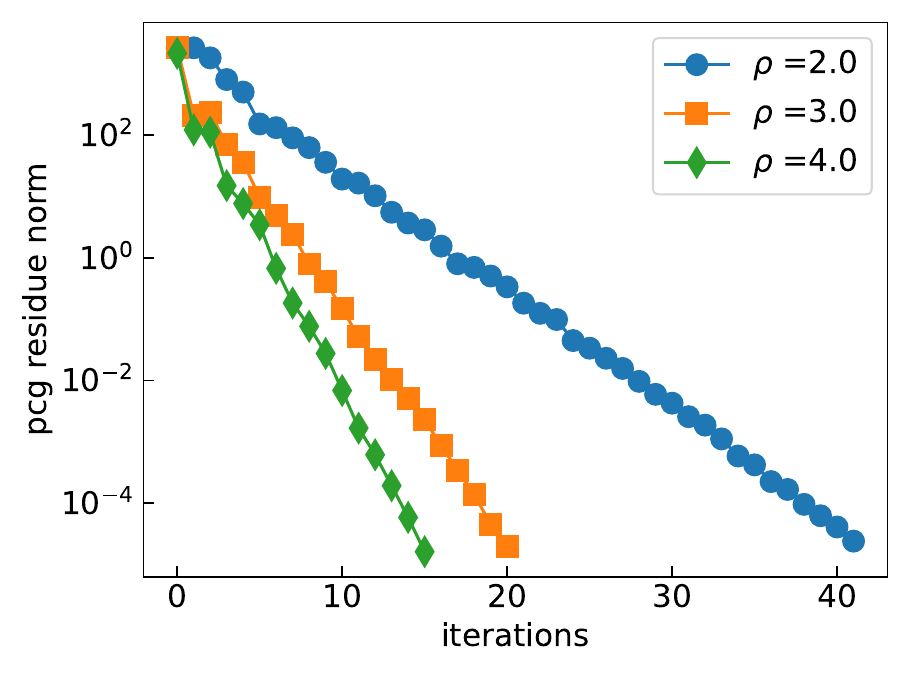}
    \caption{Demonstration of the convergence history of the pCG iteration. We choose the Mat\'ern kernel with $\nu = 5/2$; the lengthscale parameter is $0.3$. 
    In the left figure, we choose the data points to be equidistributed in $[0,1]^2$ with different grid sizes; $\rho = 4.0$. We show the equation residue norms of the iterates in each pCG iteration. In the right figure, we choose the data points to be equidistributed in $[0,1]^2$ with grid size $0.0025$ so that $N_{\rm domain} = 160000$. We plot the pCG iteration history for $\rho=2.0,3.0,4.0$.}
    \label{fig:pcg convergence history}
\end{figure}

It is worth noting that since $K(\bphi^k,\bphi^k) = DF(\vz^k)K(\bphi,\bphi)(DF(\vz^k))^T$ and we have a provably accurate sparse Cholesky factorization for $K(\bphi,\bphi)^{-1}$, the matrix-vector multiplication for $K(\bphi^k,\bphi^k)$ in each pCG iteration is efficient.

\subsection{General case} The description in the last subsection applies directly to general nonlinear PDEs, which correspond to general $\bphi$ and $F$ in \eqref{eqn: general opt for general nonlinear PDEs}. We use the maximin ordering on the boundary, followed by the conditioned maximin ordering in the interior. We denote the ordering by $Q: I \to I$. The lengthscale is defined by
\begin{equation}
    l_i = \mathrm{dist}(\vx_{Q(i)}, \{\vx_{Q(1)},...,\vx_{Q(i-1)}\} \cup \sfA)\, ,
\end{equation}
where for a boundary measurement, $\sfA=\emptyset$ and for an interior measurement $\sfA = \partial \Omega$.
With the ordering and lengthscales, we create the sparse pattern through \eqref{eqn: def sparsity pattern} (and aggregate it using the supernodes idea) and apply the KL minimization in Subsection \ref{sec: Previous results for derivative-free measurements} to obtain an approximate factor for $K(\bphi^k,\bphi^k)^{-1}$. The general algorithmic procedure is outlined in Algorithm \ref{alg:Sparse Cholesky for K bphi bphi reduced}. We now denote the sparsity parameter for the reduced kernel matrix by $\rho_{\rm r}$.
\begin{algorithm}
\caption{Sparse Cholesky factorization for $K(\bphi^k,\bphi^k)^{-1}$}
\label{alg:Sparse Cholesky for K bphi bphi reduced}
\begin{algorithmic}[1]
\STATE{\textbf{Input}: Measurements $\bphi^k$, kernel function $K$, sparsity parameter $\rho_{\rm r}$, supernodes parameter $\lambda$}
\STATE{\textbf{Output}: $U^{\rho_{\rm r}}_{\rm r}, Q_{\rm perm}$}
\STATE{Reordering and sparsity pattern: 
we first order the boundary measurements using the maximin ordering. Next, we order the interior measurements using the maximin ordering conditioned on $\partial\Omega$. This process yields a permutation matrix denoted by $Q_{\rm perm}$ such that $Q_{\rm perm}\bphi^k =\tilde{\bphi}^k$, and lengthscales $l$ for each measurement in $\tilde{\bphi}^k$. Under the ordering, we construct the aggregate sparsity pattern $S_{Q,l,\rho_{\rm r}, \lambda}$ based on the chosen values of $\rho_{\rm r}$ and $\lambda$.
}
\STATE{KL minimization: solve \eqref{eqn: KL min} with $\Theta = K(\tilde{\bphi}^k,\tilde{\bphi}^k)$, by \eqref{eqn: explicit formula for KL min}, to obtain $U^{\rho_{\rm r}}_{\rm r}$}
\RETURN $U^{\rho_{\rm r}}_{\rm r}, Q_{\rm perm}$
\end{algorithmic} \end{algorithm}

Now, putting all things together, we outline the general algorithmic procedure for solving the PDEs using the second-order Gauss-Newton method, in Algorithm \ref{alg:Sparse Cholesky accelerated Gauss-Newton}. 
\begin{algorithm}
\caption{Sparse Cholesky accelerated Gauss-Newton for solving \eqref{eqn: general opt for general nonlinear PDEs}}
\label{alg:Sparse Cholesky accelerated Gauss-Newton}
\begin{algorithmic}[1]
\STATE{\textbf{Input}: Measurements $\bphi$, data functional $F$, data vector $\vy$, kernel function $K$, number of Gauss-Newton steps $t$, sparsity parameters $\rho, \rho_{\mathrm{r}}$, supernodes parameter $\lambda$}
\STATE{\textbf{Output}: Solution $\bz^t$}
\STATE{Factorize $K(\bphi,\bphi)^{-1} \approx P_{\rm perm}^TU^{\rho}{U^{\rho}}^TP_{\rm perm}$ using Algorithm \ref{alg:Sparse Cholesky for K bphi bphi}}
\STATE{Set $k = 0$, $\bz^k = \textbf{0}$ or other user-specified initial guess}
\WHILE{$k < t$}
\STATE{Form the reduced measurements $\bphi^{k} = DF(\vz^k)\bphi$}
\STATE{Factorize $K(\bphi^k,\bphi^k)^{-1}$ to get $Q_{\rm perm}^TU_{\rm r}^{\rho_{\rm r}}{U_{\rm r}^{\rho_{\rm r}}}^T Q_{\rm perm}$ using Algorithm \ref{alg:Sparse Cholesky for K bphi bphi reduced}}
\STATE{Use pCG to solve \eqref{eqn: reduced mtx linear system} with the preconditioner $Q_{\rm perm}^TU_{\rm r}^{\rho_{\rm r}}{U_{\rm r}^{\rho_{\rm r}}}^T Q_{\rm perm}$}
\STATE{$\vz^{k+1} = (P^T_{\rm perm}U^{\rho}{U^{\rho}}^TP_{\rm perm}) \backslash ((DF(\vz^k))^T \gamma)$}
\STATE{$k=k+1$}
\ENDWHILE
\RETURN $\bz^t$
\end{algorithmic} \end{algorithm}

 For the choice of parameters, we usually set $t$ to be between 2 to 10. Setting $\rho = O(\log(N/\epsilon))$ suffices to obtain an $\epsilon$-accurate approximation of $K(\bphi,\bphi)^{-1}$. We do not have a theoretical guarantee for the factorization algorithm applied to the reduced kernel matrix $K(\bphi^k,\bphi^k)^{-1}$. Still, our experience indicates that setting $\rho_{\rm r} = \rho$ or a constant such as $\rho_{\rm r} =3$ works well in practice. 
 We note that a larger $\rho_{\rm r}$ increases the factorization time while decreasing the necessary pCG steps to solve the linear system, as demonstrated in the right of Figure \ref{fig:pcg convergence history}. There is a trade-off here in general. 
 
 The overall complexity of Algorithm \ref{alg:Sparse Cholesky accelerated Gauss-Newton} for solving \eqref{eqn: linearize the constraint} is $O(N\log^2(N)\rho^d + N\rho^{2d}+Mt\rho_{\rm r}^{2d} + T_{\rm pCG})$ in time and $O(N\rho^{d} + M\rho_{\rm r}^d)$ in space, where $O(N\log^2(N)\rho^d)$ is the time for generating the ordering and sparsity pattern, $O(N\rho^{2d})$ is for the factorization, and $O(Mt\rho_{\rm r}^{2d})$ is for the factorizations in all the GN iterations, $T_{\rm pCG}$ is the time that the pCG iterations take. 
 

Based on empirical observations, we have found that $T_{\rm pCG}$ scales nearly linearly with respect to $N\rho^d$. This is because a nearly constant number of pCG iterations are sufficient to obtain an accurate solution, and each pCG iteration takes at most $O(N\rho^d)$ time, as explained in the matrix-vector multiplication mentioned at the end of Subsection \ref{sec: Sparse Cholesky factorization for the reduced kernel matrices}. Additionally, it is worth noting that the time required for generating the ordering and sparsity pattern ($O(N\log^2(N)\rho^d)$) is negligible in practice, compared to that for the KL minimization. Furthermore, the ordering and sparsity pattern can be pre-computed once and reused for multiple runs.


\section{Numerical experiments} In this section, we use Algorithm \ref{alg:Sparse Cholesky accelerated Gauss-Newton} to solve nonlinear PDEs. The numerical experiments are conducted on the personal computer MacBook Pro 2.4 GHz Quad-Core Intel Core i5. In all the experiments, the physical data points are equidistributed on a grid; we specify its size in each example. We always set the sparsity parameter for the reduced kernel matrix $\rho_{\rm r} = \rho$, the sparsity parameter for the original matrix. We adopt the supernodes ideas in all the examples and set the parameter $\lambda = 1.5$. 

Our theory guarantees that once the Diracs measurements are ordered first by the maximin ordering, the derivative measurements can be ordered arbitrarily. In practice, for convenience, we order them from lower-order to high-order derivatives, and for the same type of derivatives, we order the corresponding measurements based on their locations, in the same maximin way as the Diracs measurements.

Our codes are in \url{https://github.com/yifanc96/PDEs-GP-KoleskySolver}.
\label{sec: Numerical experiments}
\subsection{Nonlinear elliptic PDEs}
Our first example is the nonlinear elliptic equation
\begin{equation}
    \left\{\begin{aligned}
    -\Delta u + \tau(u) & = f \quad \text{in }\Omega\, ,\\
    u &= g \quad \text{on }\partial\Omega\, ,
    \end{aligned}
    \right.
\end{equation}
with $\tau(u) = u^3$. Here $\Omega = [0,1]^2$. We set 
\[ u(\vx) = \sum_{k=1}^{600} \frac{1}{k^6}\sin(k\pi x_1)\sin(k\pi x_2) \]
as the ground truth and use it to generate the boundary and right hand side data. We set the number of Gauss-Newton iterations to be 3. The initial guess for the iteration is a zero function. The lengthscale of the kernels is set to be $0.3$.

We first study the solution error and the CPU time regarding $\rho$. We choose the number of interior points to be $N_{\rm domain} = 40000$; or equivalently, the grid size $h = 0.005$. In the left side of Figure \ref{fig:NonLinElliptic regarding rho}, we observe that a larger $\rho$ leads to a smaller $L^2$ error of the solution. For the Mat\'ern kernel with $\nu = 5/2, 7/2$, we observe that such accuracy improvement saturates at $\rho = 2$ or $4$, while when $\nu = 9/2$. the accuracy keeps improving until $\rho=10$. This high accuracy for large $\nu$ is because the solution $u$ is fairly smooth. Using smoother kernels can lead to better approximation accuracy. On the other hand, smoother kernels usually need a larger $\rho$ to achieve the same level of approximation accuracy, as we have demonstrated in the left of Figure \ref{fig:K phi phi accuracy}.

In the right side of Figure \ref{fig:NonLinElliptic regarding rho}, we show the CPU time required to compute the solution for different kernels and $\rho$. A larger $\rho$ generally leads to a longer CPU time. But there are some exceptions: for the Mat\'ern kernel with $\nu = 7/2, 9/2$, the CPU time for $\rho=3$ is shorter than that for $\rho=2$. Again, the reason is that these smoother kernels often require a larger $\rho$ for accurate approximations. When $\rho$ is very small, although the sparse Cholesky factorization is very fast, the pCG iterations could take long since the preconditioner matrix does not approximate the matrix well. 

\begin{figure}[ht]
    \centering
    \includegraphics[width=6cm]{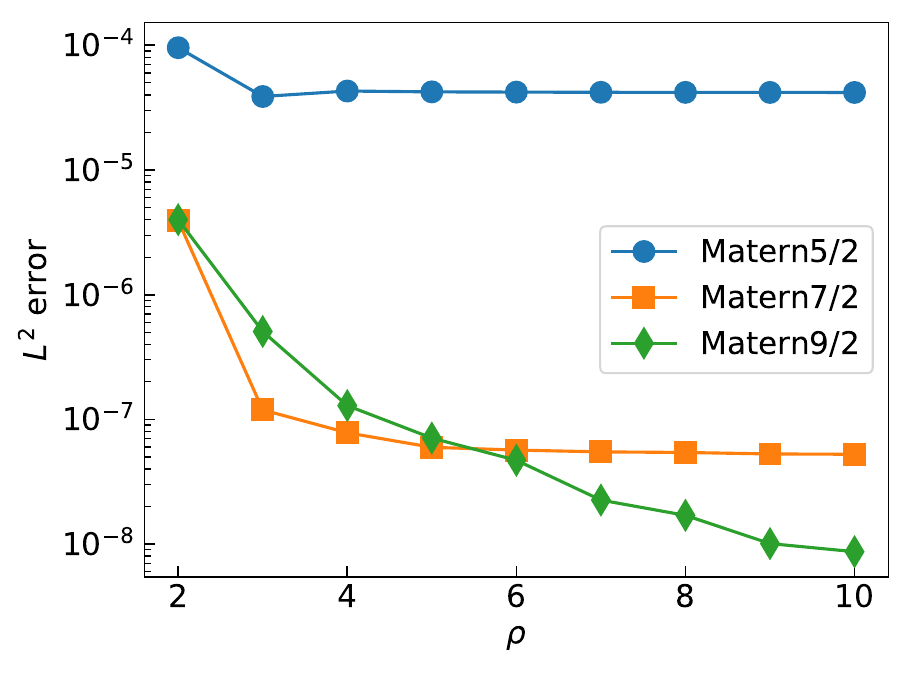}
    \includegraphics[width=6cm]{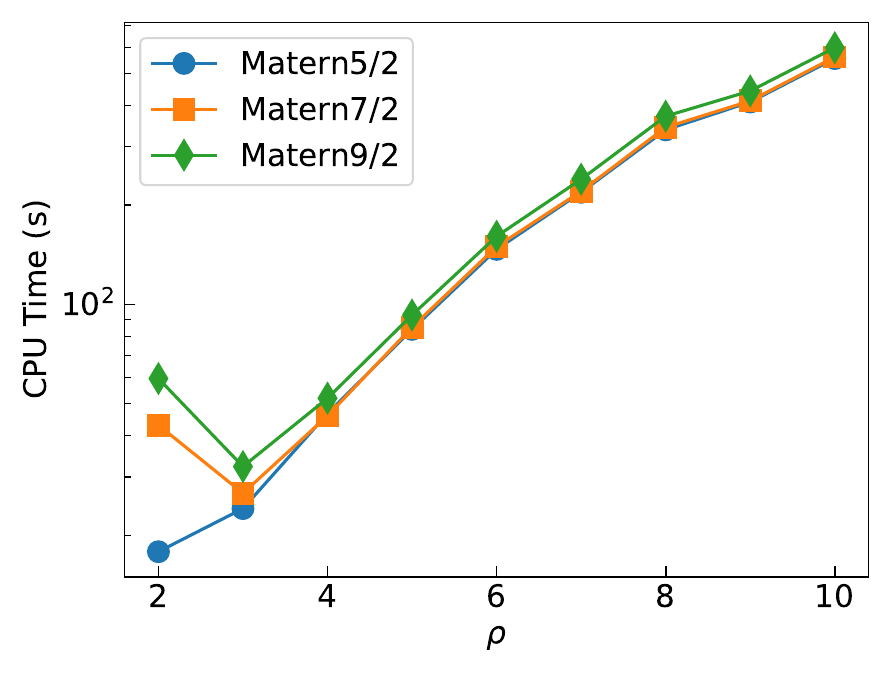}
    \caption{Nonlinear elliptic PDE example. The left figure concerns the $L^2$ errors of the solution, while the right figure concerns the CPU time. Both plots are with regard to $\rho$. We set $N_{\rm domain} = 40000$.}
    \label{fig:NonLinElliptic regarding rho}
\end{figure}

We then study the $L^2$ errors and CPU time regarding the number of physical points. We fix $\rho = 4.0$. In the left of Figure \ref{fig:NonLinElliptic accuray and time}, we observe that the accuracy improves when $N_{\rm domain}$ increases. For the smoother Mat\'ern kernels with $\nu=7/2, 9/2$, they will hit an accuracy floor of $10^{-7}$. This is because we only have a finite number of Gauss-Newton steps and a finite $\rho$. In the right of Figure \ref{fig:NonLinElliptic accuray and time}, a near-linear complexity in time regarding the number of points is demonstrated.
\begin{figure}[ht]
    \centering
    \includegraphics[width=6cm]{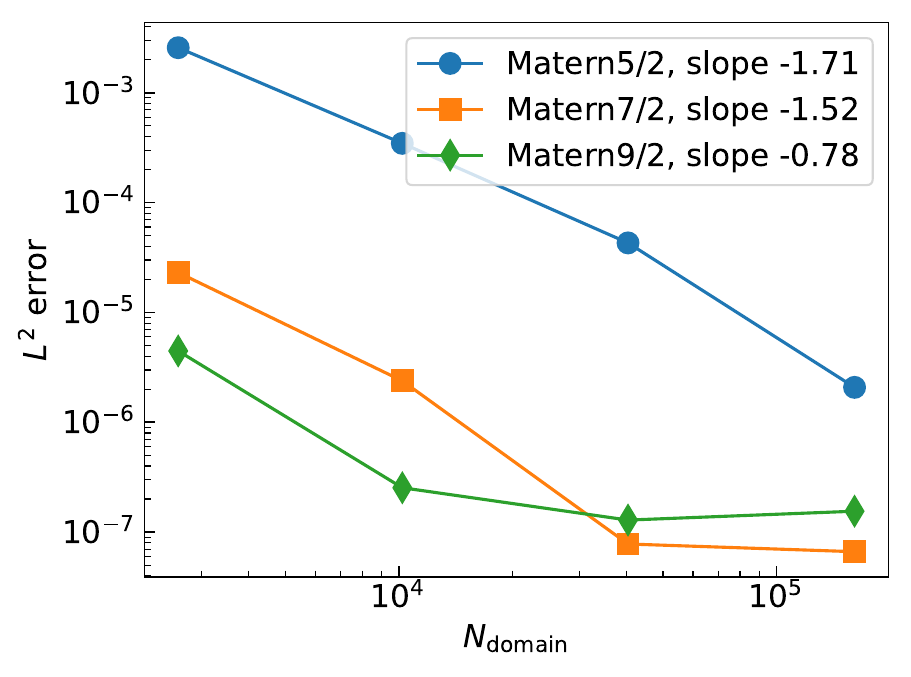}
    \includegraphics[width=6cm]{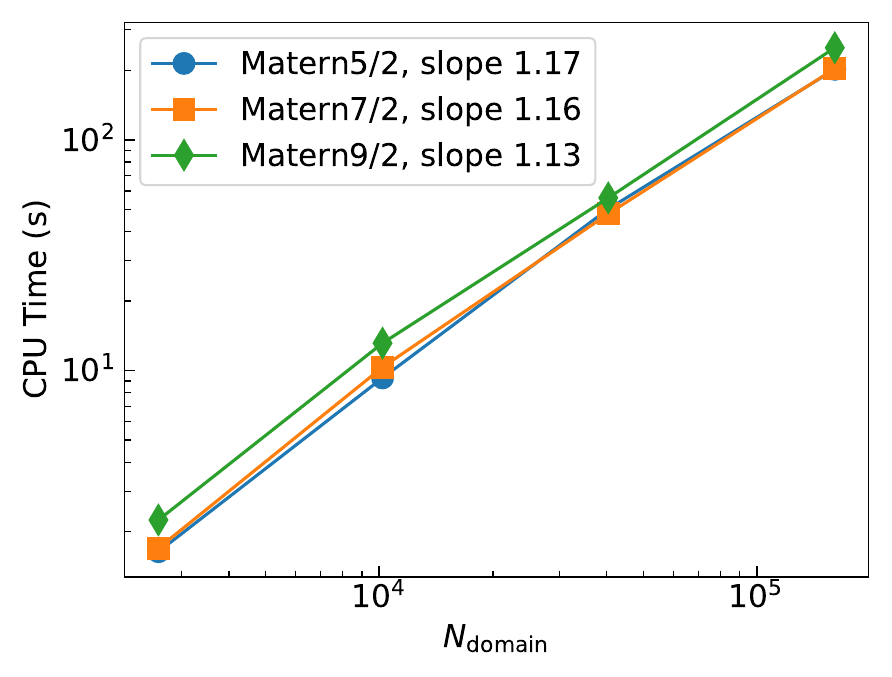}
    \caption{Nonlinear elliptic PDE example. The left figure concerns the $L^2$ errors of the solution, while the right figure concerns the CPU time. Both plots are with regard to the number of physical points in the domain. We set $\rho =4.0$.}
    \label{fig:NonLinElliptic accuray and time}
\end{figure}

\subsection{Burgers' equation}
Our second example concerns the time-dependent Burgers equation:
\begin{equation}
    \label{Burgers-proto-PDE}
    \begin{aligned}
      \partial_t u +  u \partial_x u  - 0.001  \partial_x^2 u  &= 0, \quad \forall
      (x,t) \in (-1, 1)  \times (0,1]\, , \\
      u(x, 0) & = - \sin( \pi x)\, , \\
      u(-1, t) & = u(1, t)  = 0\, .
  \end{aligned}
  \end{equation}
Rather than using a spatial-temporal GP as in \cite{chen2021solving}, we first discretize the equation in time and then use a spatial GP to solve the resulting PDE in space. This reduces the dimensionality of the system and is more efficient. More precisely, we use the Crank–Nicolson scheme with time stepsize $\Delta t$ to obtain
\begin{equation}
\label{Burgers Crank Nicolson}
\begin{aligned}
    \frac{\hat{u}(x,t_{n+1}) - \hat{u}(x,t_n)}{\Delta t} + \frac{1}{2}&\left(\hat{u}(x,t_{n+1})\partial_x\hat{u}(x,t_{n+1}) + \hat{u}(x,t_{n})\partial_x\hat{u}(x,t_{n})\right) \\
    & = \frac{0.001}{2}\left(\partial_x^2\hat{u}(x,t_{n+1})+\partial_x^2\hat{u}(x,t_{n})\right)\, ,
\end{aligned}
\end{equation}
where $\hat{u}(t_n, x)$ is an approximation of the true solution $u(t_n,x)$ with $t_n = n\Delta t$. When $\hat{u}(\cdot, t_n)$ is known, \eqref{Burgers Crank Nicolson} is a spatial PDE for the function $\hat{u}(\cdot, t_{n+1})$. We can solve \eqref{Burgers Crank Nicolson} iteratively starting from $n=0$. We use two steps of Gauss-Newton iterations with the initial guess as the solution at the last time step.

We set $\Delta t = 0.02$ and compute the solution at $t = 1$. The lengthscale of the kernels is chosen to be $0.02$. We set $\rho =4.0$ in the factorization algorithm. In the left of Figure \ref{fig:burgers solution and time}, we show our numerical solution by using a grid of size $h=0.001$ and the true solution computed by using the Cole-Hopf transformation. We see that they match very well, and the shock is captured. This is possible because we use a grid of small size so that the shock is well resolved. With a very small grid size, we need to deal with many large-size dense kernel matrices, and we use the sparse Cholesky factorization algorithm to handle such a challenge.

In the right of Figure \ref{fig:burgers solution and time}, we show the CPU time of our algorithm regarding different $N_{\rm domain}$. We clearly observe a near-linear complexity in time. The total CPU time is less than $10$ seconds to handle $50$ dense kernel matrices (since $1/\Delta t = 50$) of size larger than $10^4$ (the dimension of $K(\bphi,\bphi)$ is around $3\times N_{\rm domain}$ since we have three types of measurements) sequentially.
\begin{figure}[ht]
    \centering
    \includegraphics[width=5.5cm]{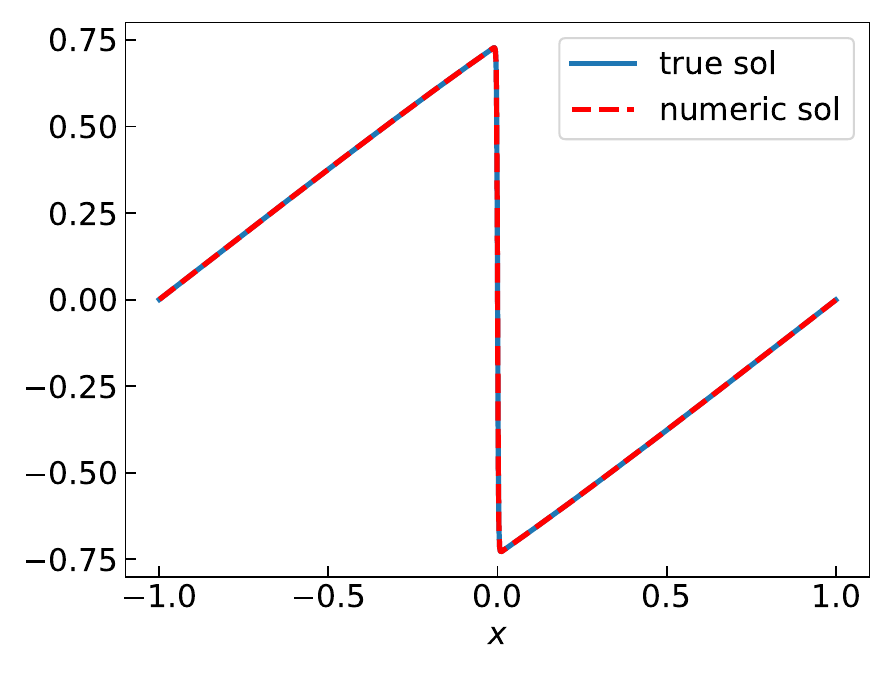}
    \includegraphics[width=6cm]{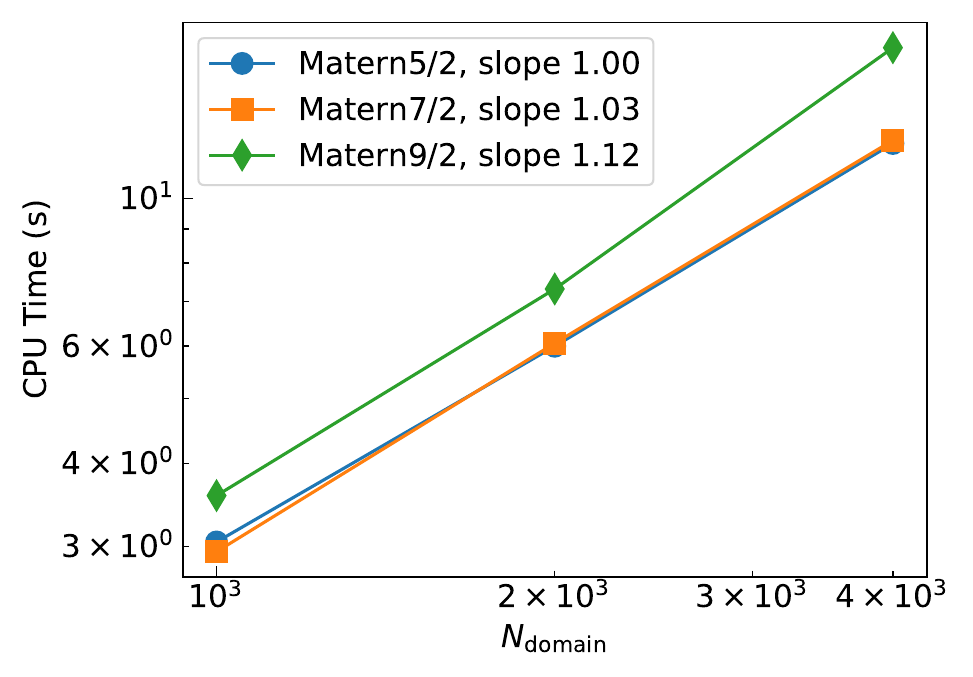}
    \caption{Burgers' equation example. The left figure is a demonstration of the numerical solution and true solution at $t=1$. The right figure concerns the CPU time regarding the number of physical points. We set $\rho = 4.0$.}
    \label{fig:burgers solution and time}
\end{figure}

We also show the accuracy of our solutions in the following Table \ref{table: burgers accuracy}. 
\begin{table}[ht]
\centering
\begin{tabular}{llll}
\hline
$N_{\rm domain}$ & $1000$   & $2000$   & $4000$   \\ \hline
$L^2$ error      & 1.729e-4 & 6.111e-5 & 7.453e-5 \\
$L^{\infty}$ error & 1.075e-3 & 2.745e-4 & 1.075e-4 \\ \hline
\end{tabular}
\caption{Burgers' equation example. The $L^2$ and $L^\infty$ errors of the computed solution at $t=1$. We use the Mat\'ern kernel with $\nu = 7/2$. The sparsity parameter $\rho = 4.0$.}
\label{table: burgers accuracy}
\end{table}
We observe high accuracy, $O(10^{-5})$ in the $L^2$ norm and $O(10^{-4})$ in the $L^\infty$ norm. The $L^2$ errors do not decrease when we increase the number of points from $2000$ to $4000$. It is because we use a fixed time stepsize $\Delta t= 0.02$ and a fixed $\rho = 4.0$.
\subsection{Monge-Amp\`ere equation} Our last example is the Monge-Amp\`ere equation in two dimensional space.
\begin{equation}
    \operatorname{det}(D^2 u) = f, \quad \vx \in (0,1)^2\, .
\end{equation}
Here, we choose $u(\vx) = \exp(0.5((x_1-0.5)^2+(x_2-0,5)^2))$ to generate the boundary and right hand side data. To ensure uniqueness of the solution, some convexity assumption is usually needed. Here, to test the wide applicability of our methodology, we directly implement Algorithm \ref{alg:Sparse Cholesky accelerated Gauss-Newton}. We adopt $3$ steps of Gauss-Newton iterations with the initial guess $u(\vx) = \frac{1}{2}\|\vx\|^2$. We choose the Mat\'ern kernel with $\nu = 5/2$. The lengthscale of the kernel is set to be $0.3$.

\begin{figure}[ht]
    \centering
    \includegraphics[width=6cm]{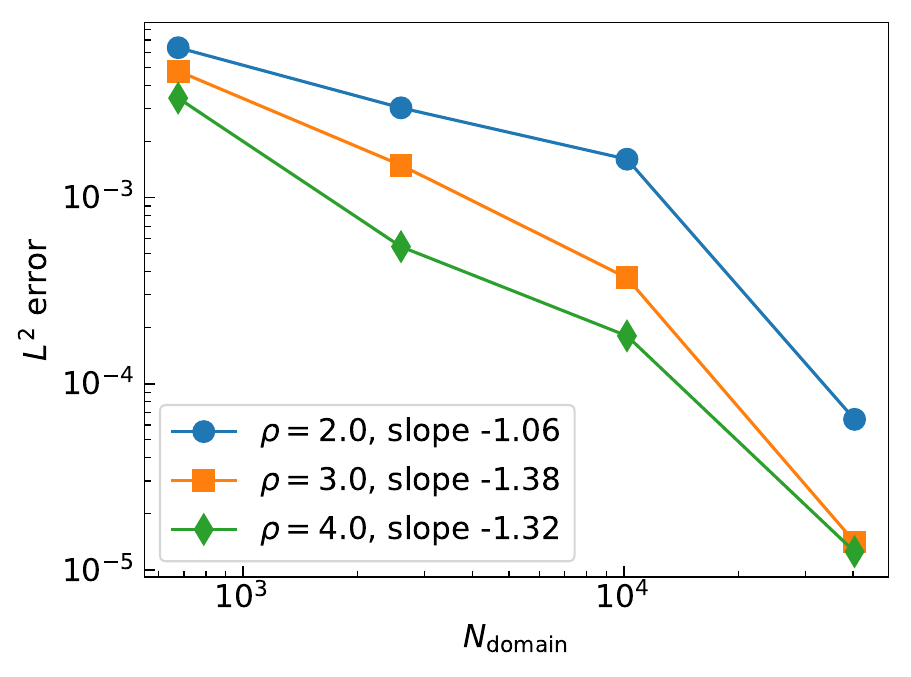}
    \includegraphics[width=6cm]{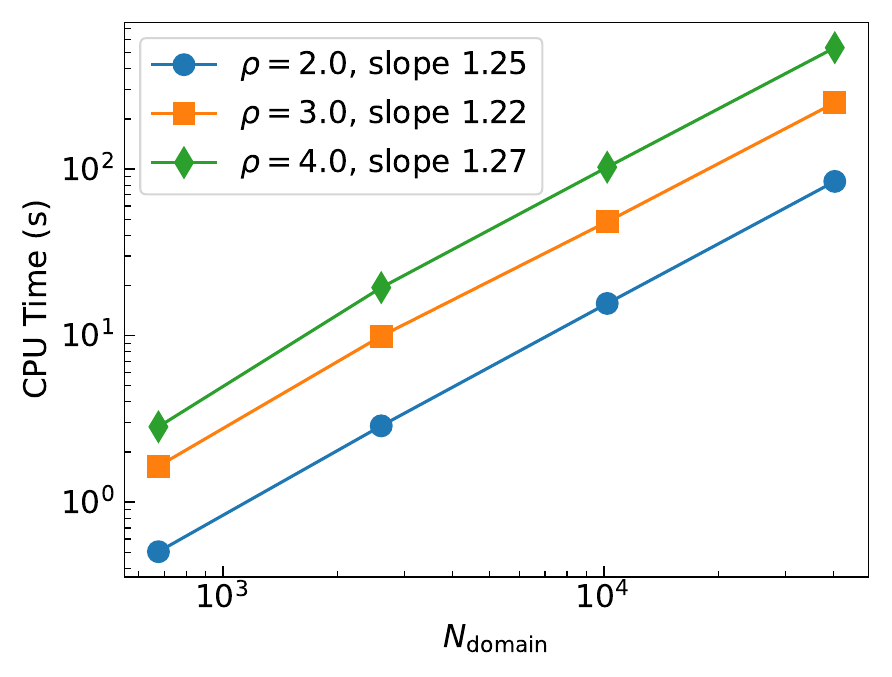}
    \caption{The Monge-Amp\`ere equation example. The left figure concerns the $L^2$ errors, while the right figure concerns the CPU time. Both are with respect to the number of physical points in space, and in both figures, we consider $\rho = 2.0,3.0,4.0$. We choose the Mat\'ern kernel with $\nu = 5/2$ in this example.}
    \label{fig:MongeAmpere accuray and time}
\end{figure}

In Figure \ref{fig:MongeAmpere accuray and time}, we present the $L^2$ errors of the solution and the CPU time with respect to $N_{\rm domain}$. Once again, we observe a nearly linear complexity in time. However, since $\det (D^2 u)$ involves several partial derivatives of the function, we need to differentiate our kernels accordingly; we use auto-differentiation in Julia for convenience, which is slightly slower than the hand-coded derivatives used in our previous numerical examples. Consequently, the total CPU time is longer compared to the earlier examples, although the scaling regarding $N_{\rm domain}$ remains similar.

As $N_{\rm domain}$ increases, the $L^2$ solution errors decrease for $\rho = 2.0,3.0,4.0$. This indicates that our kernel method is convergent for such a fully nonlinear PDE. However, since we do not incorporate singularity into the solution, this example may not correspond to the most challenging setting. Nonetheless, the success of this simple methodology combined with systematic fast solvers demonstrates its potential for promising automation and broader applications in solving nonlinear PDEs.
\section{Conclusions}
\label{sec: conclusions}
In this paper, we have investigated a sparse Cholesky factorization algorithm that enables scaling up the GP method for solving nonlinear PDEs. Our algorithm relies on a novel ordering of the Diracs and derivative-type measurements that arise in the GP-PDE methodology. With this ordering, the Cholesky factor of the inverse kernel matrix becomes approximately sparse, and we can use efficient KL minimization, equivalent to Vecchia approximation, to compute the sparse factors. We have provided rigorous analysis of the approximation accuracy by showing the exponential decay of the conditional covariance of GPs and the Cholesky factors of the inverse kernel matrix, for a wide class of kernel functions and derivative measurements.

When using second-order Gauss-Newton methods to solve the nonlinear PDEs, a reduced kernel matrix arises, in which many interior Dirac measurements are absent. In such cases, the decay is weakened, and the accuracy of the factorization deteriorates. To compensate for this loss of accuracy, we use pCG iterations with this approximate factor as a preconditioner. In our numerical experiments, our algorithm achieves high accuracy, and the computation time scales near-linearly with the number of points. This justifies the potential of GPs for solving general PDEs with automation, efficiency, and accuracy. We anticipate extending our algorithms to solving inverse problems and our theories to more kernel functions and measurement functionals in the future.

\blue{Due to the power of $d$ in the complexity, the algorithm in this paper is best suited for low-dimensional PDE problems. However, we note that for high-dimensional problems, if the points lie on a low-dimensional manifold of dimension $\tilde{d}$, then the complexity of our algorithm will depend on $\tilde{d}$ rather than $d$. This observation has been discussed in \cite{schafer2021sparse} for the case where the measurements are Diracs and will also apply to our paper, where derivative measurements induced by PDEs are considered. In truly high-dimensional problems, low-rank approximations should be pursued rather than the more ambitious full-scale approximations achieved by sparse Cholesky factorization in this paper. Potential approaches could include randomly pivoted Cholesky \cite{chen2022randomly}, which also involves selecting a specific ordering and $P_{\rm perm}$ (based on adaptive randomized ordering rather than the physically motivated coarse-to-fine ordering) but without further sparsifying these Cholesky factors. It is of interest to explore their connections to refine the low rank and sparse approximations for high dimensional scientific computing.}



\section*{Acknowledgements}
YC and HO acknowledge  support from the Air Force Office of Scientific Research under MURI award number FA9550-20-1-0358 (Machine Learning and Physics-Based Modeling and Simulation). YC is also partly supported by NSF Grants DMS-2205590.
HO also acknowledges  support from the Department of Energy under award number DE-SC0023163 (SEA-CROGS: Scalable, Efficient and Accelerated Causal Reasoning Operators, Graphs and Spikes for Earth and Embedded Systems).
FS acknowledges support from the Office of Naval Research under award number N00014-23-1-2545 (Untangling Computation). We thank Xianjin Yang for helpful comments on an earlier version of this article.

\bibliographystyle{plain}

\newpage
\appendix
\section{Supernodes and aggregate sparsity pattern}
\label{appendix: Supernodes and aggregate sparsity pattern}
The supernode idea is adopted from \cite{schafer2021sparse}, which allows to \textit{re-use} the Cholesky factors in the computation of \eqref{eqn: explicit formula for KL min} to update multiple columns at once.

We group the measurements into supernodes consisting of measurements whose points are \textit{close in location and have similar lengthscale parameters $l_i$}. To do this, we select the last index $j \in I$ of the measurements in the ordering that has not been aggregated into a supernode yet and aggregate the indices in $\{i : (i,j) \in S_{P,l,\rho}, l_i \leq \lambda l_j\}$ that have not been aggregated yet into a common supernode, for some $\lambda > 1$. We repeat this procedure until every measurement has been aggregated into a supernode. We denote the set of all supernodes as $\tilde{I}$ and write $i \leadsto \tilde{i}$ for $i \in I$ and $\tilde{i} \in \tilde{I}$ if $\tilde{i}$ is the supernode to which $i$ has been aggregated.

The idea is to assign the same sparsity pattern to all the measurements of the same supernode. To achieve so, we define the sparsity set for a supernode as the union of the sparsity sets of all the nodes it contains, namely $s_{\tilde{i}} := \{j : \exists i \leadsto  \tilde{i}, j \in s_i\}$. Then, we introduce the aggregated sparsity pattern
\[S_{P,l,\rho, \lambda} : = \bigcup_{\tilde{j}} \bigcup_{j \leadsto \tilde{j}} \{(i,j) \subset I\times I: i\leq j, i \in s_{\tilde{j}}\}\, .  \]
Under mild assumptions (Theorem B.5 in \cite{schafer2021sparse}), one can show that there are $O(N/\rho^d)$ number of supernodes and each supernode contains $O(\rho^d)$ measurements. The size of the sparsity set for a supernode $\# s_{\tilde{j}} = O(\rho^d)$. For a visual demonstration of the grouping and aggregate sparsity pattern, see Figure \ref{fig:aggregation}, which is taken from Figure 3 in \cite{schafer2021sparse}.
\begin{figure}[ht]
	\centering
		\begin{tikzpicture}[scale=0.7]
			\input{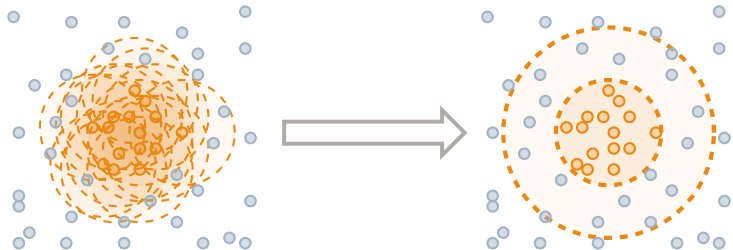}
		\end{tikzpicture}
    \caption{
    The figure on the left illustrates the original pattern $S_{P,\ell,\rho}$. For each orange point $j$, its sparsity pattern $s_j$ includes all points within a circle with a radius of $\rho$. On the right, all points $j$ that are located close to each other and have similar lengthscales are grouped into a supernode $\tilde{j}$. The supernode can be represented by a list of \emph{parents} (the orange points within an inner sphere of radius $\approx \rho$, or all $j \leadsto \tilde{j}$) and \emph{children} (all points within a radius $\leq 2\rho$, which correspond to the sparsity set $s_{\tilde{j}}$).  Figure reproduced from \cite{schafer2021sparse} with author permission.}
    \label{fig:aggregation}
\end{figure}

Now, we can compute \eqref{eqn: explicit formula for KL min} with the aggregated sparsity pattern more efficiently. Let
$s_j^* = \{i: (i,j) \in S_{P,l,\rho,\lambda}\}$ be the individual sparsity pattern for $j$ in the aggregated pattern $S_{P,l,\rho, \lambda}$. In \eqref{eqn: explicit formula for KL min}, we need to compute matrix-vector products for $\Theta_{s_j^*,s_j^*}^{-1}$. For that purpose, one can apply the Cholesky factorization to $\Theta_{s_j^*,s_j^*}$. Na\"ively computing Cholesky factorizations of every $\Theta_{s_j^*,s_j^*}$
will result in $O(N\rho^{3d})$ arithmetic complexity. However, due to the supernode construction, we can factorize $\Theta_{s_{\tilde{j}},s_{\tilde{j}}}$ once and then use the resulting factor to obtain the Cholesky factors of $\Theta_{s_j^*,s_j^*}$ directly for all $j \leadsto \tilde{j}$. This is because our construction guarantees that \[\Theta_{s_j^*,s_j^*} = \Theta_{s_{\tilde{j}},s_{\tilde{j}}} [1:\#s_j^*,1:\#s_j^*]\, ,\] where we used the MATLAB notation.   The above relation shows that sub-Cholesky factors of $\Theta_{s_{\tilde{j}},s_{\tilde{j}}}$ become the Cholesky factors of $\Theta_{s_j^*,s_j^*}$ for $j \leadsto \tilde{j}$.

\textit{Therefore, one step of Cholesky factorization works for all $O(\rho^d)$ measurements in the supernode}. In total, the arithmetic complexity is upper bounded by $O(\rho^{3d}\times N/\rho^d) = O(N\rho^{2d})$. For more details of the algorithm, we refer to section 3.2, in particular Algorithm 3.2, in \cite{schafer2021sparse}.

It was shown that the aggregate sparsity pattern could be constructed with time complexity $O(N\log(N)+N\rho^d)$ and space complexity $O(N)$; see Theorem C.3 in \cite{schafer2021sparse}. They are of a lower order compared to the time complexity $O(N\log^2(N)\rho^d)$ and space complexity $O(N\rho^d)$ for generating the maximin ordering and the original sparsity pattern $S_{P,l,\rho}$ (see Remark \ref{rmk: complexity maximin ordering sparsity patterns}).
\section{Ball-packing arguments} The ball-packing argument is useful to bound the cardinality of the sparsity pattern.
\label{appendix: Ball-packing arguments}
\begin{proposition}
    Consider the maximin ordering (Definition \ref{def: maximin ordering}) and the sparsity pattern defined in \eqref{eqn: def sparsity pattern}. For each column $j$, denote $s_j = \{i: (i,j) \in S_{P,l,\rho}\}$. The cardinality of the set $s_j$ is denoted by $\# s_j$. Then, it holds that $\# s_j = O(\rho^d)$.
\end{proposition}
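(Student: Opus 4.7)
The plan is a standard volumetric ball-packing argument exploiting the separation property built into the maximin ordering. First I would record the key monotonicity: by Definition~\ref{def: maximin ordering}, the point $\vx_{P(q+1)}$ is chosen to maximize distance to the previously selected points (together with $\sfA$), so the sequence of lengthscales $l_1 \geq l_2 \geq \cdots \geq l_N$ is nonincreasing. Consequently, for any two indices $i_1 < i_2 \leq j$, the point $\vx_{P(i_1)}$ was already in the chosen set when $\vx_{P(i_2)}$ was picked, so
\[
\mathrm{dist}(\vx_{P(i_1)}, \vx_{P(i_2)}) \geq l_{i_2} \geq l_j.
\]

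Next I would use this separation together with the sparsity condition. By definition of $S_{P,l,\rho}$, every $i \in s_j$ satisfies $\mathrm{dist}(\vx_{P(i)}, \vx_{P(j)}) \leq \rho l_j$, so the set $\{\vx_{P(i)} : i \in s_j\}$ lies entirely in the closed Euclidean ball $B(\vx_{P(j)}, \rho l_j)$. Open balls of radius $l_j/2$ centered at these points are pairwise disjoint, since any two centers are at distance $\geq l_j$, and each such small ball is contained in $B(\vx_{P(j)}, \rho l_j + l_j/2)$.

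Finally I would compare $d$-dimensional Lebesgue volumes. Writing $\omega_d$ for the volume of the unit ball in $\bR^d$, disjointness gives
\[
\# s_j \cdot \omega_d (l_j/2)^d \leq \omega_d (\rho l_j + l_j/2)^d,
\]
which simplifies to $\#s_j \leq (2\rho+1)^d = O(\rho^d)$, as claimed. Summing over $j$ then yields $\#S_{P,l,\rho} = O(N\rho^d)$ as used in the paper.

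The main subtlety, rather than an obstacle, is being careful that the separation bound $\mathrm{dist}(\vx_{P(i_1)},\vx_{P(i_2)}) \geq l_j$ continues to hold when derivative-type measurements are included. This is automatic in our setting because the lengthscales for $i > M$ are set to the constant $l_M$ (see~\eqref{eqn: lengthscale def, general PDE}); however each physical point $\vx_{P(i)}$ can then appear with multiplicity up to a fixed constant $c(J,d)$ bounded by the number of multi-indices with $|\gamma| \leq J$, giving $\#s_j \leq c(J,d)(2\rho+1)^d = O(\rho^d)$ with a constant depending on the maximum derivative order but not on $N$.
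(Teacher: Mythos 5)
Your proof is correct and takes essentially the same ball-packing route as the paper's (Appendix B): the maximin separation gives $\mathrm{dist}(\vx_{P(i)},\vx_{P(i')})\geq l_j$ for distinct $i,i'\leq j$, and a volume comparison inside $B(\vx_{P(j)},(\rho+1/2)l_j)$ yields $\#s_j=O(\rho^d)$. Your explicit note that derivative measurements at the same physical point contribute only a bounded multiplicity $c(J,d)$ is a worthwhile clarification that the paper's appendix proof leaves implicit (it is stated there purely for the Dirac-only maximin setting).
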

\begin{proof}
    Fix a $j$. For any $i \in s_j$, we have $\operatorname{dist}(\vx_{P(i)},\vx_{P(j)}) \leq \rho l_j$. Moreover, by the definition of the maximin ordering, we know that for $i, i' \in s_j$ and $i \neq i'$, it holds that $\operatorname{dist}(\vx_{P(i)},\vx_{P(i')}) \geq l_j$. Thus, the cardinality of $s_i$ is bounded by the number of disjoint balls of radius $l_j$ that the ball $B(\vx_{P(i)}, 2\rho l_j)$ can contain. Clearly, $\#s_i = O(\rho^d)$. The proof is complete.
\end{proof}
\section{Explicit formula for the KL minimization} By direct calculation, one can show the KL minimization attains an explicit formula. The proof of this explicit formula follows a similar approach to that of Theorem 2.1 in \cite{schafer2021sparse}, with the only difference being the use of upper Cholesky factors.
\label{appendix: Explicit formula for the KL minimization}
\begin{proposition}
    The solution to \eqref{eqn: KL min} is given by \eqref{eqn: explicit formula for KL min}.
\end{proposition}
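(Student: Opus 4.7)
The plan is to expand the KL divergence into a sum of column-wise contributions, decouple the columns using the sparsity structure, and then solve each scalar subproblem by a direct first-order condition.

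First I would write the KL divergence between the two centered Gaussians explicitly. Since $\Sigma := (\hat U \hat U^T)^{-1}$ has $\Sigma^{-1} = \hat U \hat U^T$, standard formulas give
\begin{equation*}
2\operatorname{KL}\!\left(\cN(0,\Theta)\parallel\cN(0,(\hat U\hat U^T)^{-1})\right) = \operatorname{tr}(\hat U^T\Theta\hat U) - 2\sum_{j=1}^{N}\log\hat U_{jj} - N - \log\det\Theta,
\end{equation*}
where I used $\log\det(\hat U\hat U^T)=2\sum_j\log\hat U_{jj}$ because $\hat U$ is upper triangular with positive diagonal (the positivity may be assumed without loss of generality since flipping the sign of a column of $\hat U$ does not change $\hat U\hat U^T$). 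The last two terms are independent of $\hat U$, so they can be dropped from the optimization.

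Second, I would exploit the sparsity pattern to decouple columns. By definition of $\mcl{S}_{P,l,\rho}$ and the fact that $(i,j)\in S_{P,l,\rho}$ forces $i\leq j$, for each $j$ the $j$-th column of $\hat U$ is supported on $s_j$, with $j$ itself appearing as the largest index in $s_j$. Writing $v_j := \hat U_{s_j,j}\in\bR^{\#s_j}$, we have $\operatorname{tr}(\hat U^T\Theta\hat U)=\sum_j v_j^T\Theta_{s_j,s_j}v_j$ and $\hat U_{jj}=\mbf e_{\#s_j}^T v_j$, so the optimization problem reduces to the $N$ independent scalar problems
\begin{equation*}
\min_{v\in\bR^{\#s_j}}\ F_j(v):= v^T\Theta_{s_j,s_j}v - 2\log(\mbf e_{\#s_j}^T v),\qquad j=1,\dots,N.
\end{equation*}

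Third, I would solve each subproblem by first-order conditions. Since $\Theta_{s_j,s_j}\succ 0$ and $-\log$ is strictly convex, $F_j$ is strictly convex on the open half-space $\{\mbf e_{\#s_j}^T v>0\}$, so a critical point is the unique minimizer. Setting $\nabla F_j(v)=0$ gives $\Theta_{s_j,s_j}v=\mbf e_{\#s_j}/(\mbf e_{\#s_j}^T v)$, whence $v=\alpha\,\Theta_{s_j,s_j}^{-1}\mbf e_{\#s_j}$ with $\alpha:=1/(\mbf e_{\#s_j}^T v)$. Substituting back yields $\alpha^{-2}=\mbf e_{\#s_j}^T\Theta_{s_j,s_j}^{-1}\mbf e_{\#s_j}$, and taking the positive root gives exactly
\begin{equation*}
v_j = U_{s_j,j} = \frac{\Theta_{s_j,s_j}^{-1}\mbf e_{\#s_j}}{\sqrt{\mbf e_{\#s_j}^T\Theta_{s_j,s_j}^{-1}\mbf e_{\#s_j}}},
\end{equation*}
which is \eqref{eqn: explicit formula for KL min}. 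The resulting $U$ is upper triangular with the prescribed sparsity pattern by construction, so $U\in\mcl{S}_{P,l,\rho}$.

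The only subtle point is ensuring well-posedness: one must check that $\mbf e_{\#s_j}^T\Theta_{s_j,s_j}^{-1}\mbf e_{\#s_j}>0$, which is immediate since $\Theta_{s_j,s_j}^{-1}\succ 0$. Apart from this, the argument is a direct convex calculation and contains no real obstacle; the main conceptual content is the column decoupling, which is specific to the upper-triangular Cholesky formulation (with lower-triangular factors the analogous decoupling uses $\mbf e_1$ in place of $\mbf e_{\#s_j}$, which is the only difference from Theorem~2.1 of \cite{schafer2021sparse}).
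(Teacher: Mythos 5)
Your proof is correct and follows essentially the same route as the paper: expand the Gaussian KL divergence, observe that it decouples into independent column-wise scalar subproblems thanks to the upper-triangular sparsity structure, and solve each by the first-order condition. Your version is actually a bit cleaner — you compute $\log\det(\hat U\hat U^T)=2\sum_j\log\hat U_{jj}$ directly and flag strict convexity to justify that the critical point is the unique minimizer, whereas the paper passes through the slightly awkward expression $-\log\det(U_{:,j}^T\Theta U_{:,j})$ during the decoupling step before arriving at the same per-column objective $-2\log U_{jj} + U_{s_j,j}^T\Theta_{s_j,s_j}U_{s_j,j}$.
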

\begin{proof}
    We use the explicit formula for the KL divergence between two multivariate Gaussians:
    \begin{equation}
    \label{eqn: KL between two Gaussians}
    \begin{aligned}
        &\operatorname{KL}\left(\cN(0,\Theta)\parallel\cN(0,(UU^T)^{-1})\right)\\
        =&\frac{1}{2}[-\log\det (U^T\Theta U) + \operatorname{tr}(U^T\Theta U) - N]\, .
    \end{aligned}
    \end{equation}
    \blue{To identify the minimizer, the constant and scaling do not matter, so we focus on the $-\log\det(U^T\Theta U)$ and $\operatorname{tr}(U^T\Theta U)$ parts. By writing $U = [U_{:,1}, ..., U_{:,M}]$, we get
    \begin{equation}
        -\log\det (U^T\Theta U) + \operatorname{tr}(U^T\Theta U) = \sum_{j=1}^M [-2\log U_{jj} + \operatorname{tr}(U_{:,j}^T\Theta U_{:,j})] -\log \det \Theta\, .
    \end{equation}
    Thus, the minimization is decoupled into each column of $U$. Due to the choice of the sparsity pattern, we have $U_{:,j}^T\Theta U_{:,j} = U_{s_j,j}^T\Theta_{s_j,s_j}U_{s_j,j}$. We can further simplify the formula
    \begin{equation}
         \operatorname{tr}(U_{:,j}^T\Theta U_{:,j}) =  \operatorname{tr}(U_{s_j,j}^T\Theta_{s_j,s_j} U_{s_j,j}) \, .
    \end{equation}
    It suffices to identify the minimizer of $- 2\log U_{jj} + \operatorname{tr}(U_{s_j,j}^T\Theta_{s_j,s_j} U_{s_j,j})$. Taking the derivatives, we get the optimality condition:
    \begin{equation}
        -\frac{2}{U_{jj}}\textbf{e}_{\# s_j}  + 2\Theta_{s_j,s_j} U_{s_j,j} = 0\, ,
    \end{equation}
    where $\textbf{e}_{\# s_j}$ is a standard basis vector in $\mathbb{R}^{\#s_j}$ with the last entry being $1$ and other entries equal $0$.}
    
    Solving this equation leads to the solution
    \begin{equation}
    U_{s_j,j} = \frac{\Theta_{s_j,s_j}^{-1}\textbf{e}_{\# s_j}}{\sqrt{\textbf{e}_{\# s_j}^T\Theta_{s_j,s_j}^{-1}\textbf{e}_{\# s_j}}}\, .
\end{equation}
The proof is complete.
\end{proof}

\section{Proofs of the main theoretical results}
\subsection{Proof of Theorem \ref{thm: cholesky factor decay}}
\label{appendix: Proof of thm: cholesky factor decay}
\begin{proof}[Proof of Theorem \ref{thm: cholesky factor decay}]
We rely on the interplay between GP regression, linear algebra, and numerical homogenization to prove this theorem. Consider the GP $\xi \sim \cN(0,\cL^{-1})$. For each measurement functional, we define the Gaussian random variables $Y_i = [\xi, \tilde{\phi}_i] \sim \cN(0, [\tilde{\phi}_i, \cL^{-1}\tilde{\phi}_i]) = \cN(0, \Theta_{ii})$. As mentioned in Remark \ref{remark: why coarse-to-fine ordering} and proved in Proposition \ref{prop: connect Cholesky factor to cond var}, we have a relation between the Cholesky factor and the GP conditioning, as 
\[\frac{U^\star_{ij}}{U^\star_{jj}} = (-1)^{i\neq j} \frac{\mathrm{Cov}[Y_i,Y_j|Y_{1:j-1\backslash \{i\}}]}{\mathrm{Var}[Y_i|Y_{1:j-1\backslash \{i\}}]}, i \leq j \, . \]
Moreover, by Proposition \ref{prop: connect cond var to cond expect}, one can connect the conditional covariance of GPs with conditional expectation, such that
\begin{equation}
\label{eqn: conditional covariance to expectation}
    \frac{\mathrm{Cov}[Y_i,Y_j|Y_{1:j-1\backslash \{i\}}]}{\mathrm{Var}[Y_i|Y_{1:j-1\backslash \{i\}}]} = \bE[Y_j|Y_i = 1, Y_{1:j-1\backslash \{i\}} = 0]\, .
\end{equation}
The above conditional expectation is related to the \textit{Gamblets} introduced in the numerical homogenization literature. Indeed, using the relation $Y_i = [\xi, \tilde{\phi}_i]$, we have 
\begin{equation}
\label{eqn: conditional expectation to gamblets}
    \bE[Y_j|Y_i = 1, Y_{1:j-1\backslash \{i\}} = 0] = [\bE[\xi|Y_i = 1, Y_{1:j-1\backslash \{i\}} = 0],\tilde{\phi}_j]  = [\psi_j^i, \tilde{\phi}_j]\, ,
\end{equation}
where $\psi_j^i(\vx) := \bE[\xi(\vx) |Y_i = 1, Y_{1:j-1\backslash \{i\}} = 0]$; it is named Gamblets in \cite{owhadi2017multigrid,owhadi2019operator}. 

Importantly, for the conditional expectation, by Proposition \ref{prop: connect cond expect to Gamblets}, we have the following variational characterization \cite{owhadi2017multigrid,owhadi2019operator}:
 \begin{equation}
    \label{eqn: optimization def basis}
    \begin{aligned}
    \psi_{j}^i = \text{argmin}_{\psi \in H_0^s(\Omega)}\quad  &[\psi, \cL \psi]\\
     \text{subject to}\quad &[\psi, \tilde{\phi}_k] = \delta_{i,k}\ \  \text{for}\ \  1 \leq k \leq j-1 \, .
 \end{aligned}
    \end{equation}

A main property of the Gamblets is that they can exhibit an exponential decay property under suitable assumptions, which can be used to prove our theorem. 
We collect the related theoretical results in Appendix \ref{sec: Regarding the exponential decay of Gamblets}; we will use them in our proof.

Our proof for the theorem consists of two steps. The first step is to bound $|{U^\star_{ij}}/{U^\star_{jj}}|$, and the second step is to bound $|U_{jj}^*|$.

For the first step, we separate the cases $j \leq M$ and $j > M$. Indeed, the case $j \leq M$ has been covered in \cite{schafer2021compression, schafer2021sparse}. Here, our proof is simplified. 

For $1\leq i \leq j \leq M$, by the discussions above, we have the relation:
\begin{equation}
    \left|\frac{U^\star_{ij}}{U^\star_{jj}}\right| = \left|[\psi_j^i, \tilde{\phi}_j]\right| = |\psi_j^i(\vx_{P(j)})|\, ,
\end{equation}
where we used the fact that $\tilde{\phi}_j = \updelta_{\vx_{P(j)}}$ because all the Diracs measurements are ordered first. To bound $|\psi_j^i(\vx_{P(j)})|$, we will use the exponential decay results for Gamblets that we prove in Proposition \ref{prop: exp decay derivative measurements}. More precisely, to apply Proposition \ref{prop: exp decay derivative measurements} to this context, we need to verify its assumptions, especially Assumption \ref{assumption for Gamblets: Domain and partition}. 

In our setting, we can construct a partition of the domain $\Omega$ by using the Voronoi diagram. Denote $X_{j-1} = \{\vx_{P(1)}, ... , \vx_{P(j-1)}\}$. We note that $X_M$ will consist of all the physical points. We define $\tau_k$, for $1\leq k \leq j-1$, to be the Voronoi cell, which contains all points in $\Omega$ that are closer to $\vx_{P(k)}$ than to any other in $X_{j-1}$. Since we assume $\Omega$ is convex, $\tau_k$ is also convex. And a bounded convex domain is itself uniformly Lipschitz. \blue{We can show that these Voronoi cells are also uniformly Lipschitz (i.e., the Lipschitz constants for each cell are bounded uniformly). To see this, note that these cells are convex polygons; the Lipschitz constant depends on the size of the angles at the corners. The angles cannot be arbitrarily small since each cell contains a ball of size $\delta h$ and is inside a ball of size $h/\delta$, as shown in the arguments below. Therefore, there is a uniform lower bound on the size of angles at each corner of these convex polygons, so they are uniformly Lipschitz.}

Furthermore, to verify the other parts in Assumption \ref{assumption for Gamblets: Domain and partition}, we analyze the homogeneity parameter of $X_{j-1}$. By definition,
\begin{equation}
    \delta(X_{j-1}; \partial \Omega) = \frac{\min_{\vx\neq\vy \in X_{j-1}} \mathrm{dist}(\vx,\{\vy\}\cup \partial\Omega)}{\max_{\vx\in\Omega} \mathrm{dist}(\vx, X_{j-1} \cup \partial \Omega)} \, .  
\end{equation}
\blue{Recall the definition of the maximin ordering: the maximin ordering conditioned on a set $\sfA = \partial \Omega$ for points $\{\vx_i, i\in I\}$ is obtained by successively selecting the point $\vx_i$ that is furthest away from $\sfA$ and the already picked points. This implies \begin{equation*}
    l_{j-1} = \mathrm{dist}(\vx_{P(j-1)}, \{\vx_{P(1)},...,\vx_{P(j-2)}\} \cup \partial\Omega)\, .
\end{equation*}} We thus have
\begin{equation}
\label{eqn: numerator larger than lj}
    \min_{\vx \neq \vy \in X_{j-1}} \mathrm{dist}(\vx,\{\vy\}\cup \partial\Omega) = l_{j-1}\, .
\end{equation}
Then, by the triangle inequality, it holds that
\begin{equation}
\label{eqn: D 7 proof}
\begin{aligned}
    \max_{\vx\in\Omega} \mathrm{dist}(\vx, X_{j-1} \cup \partial \Omega) &\leq \max_{\vx\in X_M} \mathrm{dist}(\vx, X_{j-1} \cup \partial \Omega) + \max_{\vx\in \Omega} \mathrm{dist}(\vx, X_M \cup \partial\Omega)\\
    & \leq l_{j} + l_M/ \delta(X_M; \partial \Omega) \leq (1+1/\delta(X_M; \partial \Omega))l_{j}\, ,
\end{aligned}
\end{equation}
where in the second inequality, we used the definition of the lengthscales $l_{j}$ and the homogeneity assumption of $X_M$, \blue{i.e., $\delta(X_M; \partial \Omega) > 0$, which has the following relation
\[\delta(X_{M}; \partial \Omega) = \frac{\min_{\vx\neq\vy \in X_{M}} \mathrm{dist}(\vx,\{\vy\}\cup \partial\Omega)}{\max_{\vx\in\Omega} \mathrm{dist}(\vx, X_{M} \cup \partial \Omega)}=\frac{l_M}{\max_{\vx\in \Omega} \mathrm{dist}(\vx, X_M \cup \partial\Omega)} \, .   \]
}
Combining the above two estimates, we get $\delta(X_{j-1}; \partial \Omega) \geq 1/(1+1/\delta(X_M; \partial \Omega)) > 0$ where we used the fact that $l_j \leq l_{j-1}$. So $X_{j-1}$ is also homogeneously distributed with $\delta(X_{j-1}; \partial \Omega)>0$. 



We are ready to verify Assumption \ref{assumption for Gamblets: Domain and partition}. Firstly, the balls $B(\vx, l_{j}/2), \vx \in X_{j-1}$ do not intersect, and thus inside each $\tau_k$, there is a ball of center $\vx_{P(k)}$ and radius $l_{j}/2$. Secondly, since $\max_{\vx\in\Omega} \mathrm{dist}(\vx, X_{j-1} \cup \partial \Omega) \leq (1+1/\delta(X_M; \partial \Omega))l_{j}$, we know that $\tau_i$ is contained in a ball of center $\vx_{P(k)}$ and radius $(1+1/\delta(X_M; \partial \Omega))l_{j}$. 
Therefore, Assumption \ref{assumption for Gamblets: Domain and partition} holds with $h = l_{j}/2$, $\delta = \min(1/(2+2/\delta(X_M; \partial \Omega)),1)$ and $Q=j-1$. Assumption \ref{assumption for Gamblets: Measurement functionals} readily holds by our choice of measurements.

Thus, applying Proposition \ref{prop: exp decay derivative measurements}, we then get 
\begin{equation}
    \left|\frac{U^\star_{ij}}{U^\star_{jj}}\right| = |\psi_j^i(\vx_{P(j)})| \leq C l_{j}^{-2s}\exp\left(-\frac{\mathrm{dist}(\vx_{P(i)}, \vx_{P(j)})}{Cl_{j}}\right)\, ,
\end{equation}
where $C$ is a constant depending on $\Omega, \delta, d, s, \|\cL\|, \|\cL^{-1}\|$. We obtain the exponential decay of $|{U^\star_{ij}}/{U^\star_{jj}}|$ for $j \leq M$. 

For $j > M$ and $i \leq j$, we have
\begin{equation}
\label{appendix-D-9}
    \left|\frac{U^\star_{ij}}{U^\star_{jj}}\right| = \left|[\psi_j^i, \tilde{\phi}_j]\right| \leq \max_{0\leq |\gamma| \leq J } \left|D^\gamma \psi_j^i(\vx_{P(j)})\right|\, ,
\end{equation}
where we used the fact that $\tilde{\phi}_j$ is of the form $\updelta_{\vx_{P(j)}} \circ D^{\gamma}$. Now, for $\psi_j^i$, the set of points we need to deal with is $X_M$. Similar to the case $j \leq M$, we define $\tau_k$, for $1\leq k \leq M$, to be the Voronoi cell of these points in $\Omega$. Then, using the same arguments in the previous case, we know that Assumption \ref{assumption for Gamblets: Domain and partition} holds with $h = l_M/2$, $\delta = \min(\delta(X_M; \partial \Omega)/2,1)$ and $Q=M$. Assumption \ref{assumption for Gamblets: Measurement functionals} readily holds by our choice of measurements. Therefore Proposition \ref{prop: exp decay derivative measurements} implies that
\begin{equation}
\label{appendix-eqn-D-10}
    \max_{0\leq |\gamma| \leq J } \left|D^\gamma \psi_j^i(\vx_{P(j)})\right| \leq C l_M^{-2s}\exp\left(-\frac{\mathrm{dist}(\vx_{P(i)}, \vx_{P(j)})}{Cl_M}\right)\, ,
\end{equation}
where $C$ is a constant depending on $\Omega, \delta, d, s, \|\cL\|, \|\cL^{-1}\|, J$.

Summarizing the above arguments, we have obtained that 
\begin{equation}
    \left|\frac{U^\star_{ij}}{U^\star_{jj}}\right|  \leq C l_j^{-2s}\exp\left(-\frac{\mathrm{dist}(\vx_{P(i)}, \vx_{P(j)})}{Cl_j}\right)\, ,
\end{equation}
for any $1\leq i \leq j \leq N$, noting that by our definition $l_j = l_M$ for $j > M$.

Now, we analyze $|U_{jj}^*|$. Note that $\Theta = K(\tilde{\bphi},\tilde{\bphi}) \in \bR^{N\times N} $ and $\Theta^{-1}=U^\star {U^\star}^T$. By the arguments above, we know that the assumptions in Proposition \ref{prop: eigenvalue lower bounds on K phi phi} are satisfied with $h = l_M/2$, $\delta = \min(\delta(X_M; \partial \Omega)/2,1)$ and $Q=M$. Thus, for any vector $w \in \bR^{N}$, by Proposition \ref{prop: eigenvalue lower bounds on K phi phi}, we have \[w^T\Theta^{-1} w \leq C l_M^{-2s+d} |w|^2\, ,\] for some constant $C$ depending on 
$\delta(X_M), d, s, \|\cL\|, J$. This implies that \[|{U^\star}^T w|^2 \leq Cl_M^{-2s+d} |w|^2\, .\]
Taking $w$ to be the standard basis vector $\textbf{e}_{j}$, we get 
\[\sum_{k=j}^N |{U^\star_{jk}}|^2 \leq Cl_M^{-2s+d}\, ,\]
which leads to $|U_{jj}^\star| \leq C l_M^{-s+d/2}$ for some $C$ depending on 
$\delta(X_M; \partial \Omega), d, s, \|\cL\|, J$.
\end{proof}
\blue{
\begin{newremark}
\label{appendix-laplace-measurements-proof}
    We provide some remarks regarding our assumption in Section \ref{sec: theory, set-up}, where we assume all the measurements are of the type $\updelta_{\vx_i} \circ D^{\gamma}$ with the multi-index $\gamma = (\gamma_1,...,\gamma_d) \in \bN^d$ and $|\gamma|:=\sum_{k=1}^d \gamma_k \leq J$; here $D^{\gamma}=D^{\gamma_1}_{\vx^{(1)}}\cdots D^{\gamma_d}_{\vx^{(d)}}$. This assumption is for convenience of presentation and can be generalized. For example, similar proofs can be applied if Laplacian measurements are considered. In fact, for such a case, the main change of the proof of Theorem \ref{thm: cholesky factor decay} is the bound on the right hand side of \eqref{appendix-D-9}, where one now needs to bound $|\Delta \psi_j^i|$. Here $\psi_j^i$ is defined by \eqref{eqn: optimization def basis}, with $\tilde{\phi}_j = \updelta_{\vx_{P(j)}} \circ \Delta $. Note that Theorem \ref{theorem: exp decay of Gamblets} applies to general measurements, and its conditions are satisfied when Laplace measurements are added once linear independence between the measurements is ensured (which is needed for the condition \eqref{eqn: decay cond3}). With Theorem \ref{theorem: exp decay of Gamblets}, we can prove Proposition \ref{prop: exp decay derivative measurements} when Laplace measurements are added, which then implies the bound on the right hand side of \eqref{appendix-D-9}.
\end{newremark}
}
\subsection{Proof of Theorem \ref{thm: KL minimization accuracy}}
\label{appendix: Proof of thm: KL minimization accuracy}
We need the following lemma, which is taken from Lemma B.8 in \cite{schafer2021sparse}.
\begin{lemma}
\label{lemma: comparison between KL and Fro}
Let $\lambda_{\min}, \lambda_{\max}$ be the minimal and maximal eigenvalues of $\Theta \in \bR^{N\times N}$, respectively. Then there exists a universal constant $\eta > 0$ such that for any matrix $M \in \bR^{N \times N}$, we have
\begin{itemize}
    \item If $\lambda_{\max} \|\Theta^{-1} - MM^T\|_{\mathrm{Fro}} \leq \eta$, then \[\operatorname{KL}\left(\cN(0,\Theta)\parallel\cN(0,(MM^T)^{-1})\right) \leq \lambda_{\max} \|\Theta^{-1} - MM^T\|_{\mathrm{Fro}}\, ;\]
    \item If $\operatorname{KL}\left(\cN(0,\Theta)\parallel\cN(0,(MM^T)^{-1})\right) \leq \eta$, then
    \[\|\Theta^{-1} - MM^T\|_{\mathrm{Fro}} \leq \lambda_{\min}^{-1}\operatorname{KL}\left(\cN(0,\Theta)\parallel\cN(0,(MM^T)^{-1})\right) \, .\]
\end{itemize}
\end{lemma}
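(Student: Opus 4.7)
The plan is to leverage the KL optimality of $U^\rho$ together with the entrywise decay from Theorem \ref{thm: cholesky factor decay} and the comparison Lemma \ref{lemma: comparison between KL and Fro} to pass between KL divergence and Frobenius norms.

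First, I would construct an explicit competitor $\hat{U}\in\mathcal{S}_{P,l,\rho}$ by truncating the true upper Cholesky factor $U^\star$ to the sparsity pattern, i.e.\ $\hat{U}_{ij}=U^\star_{ij}$ if $(i,j)\in S_{P,l,\rho}$ and zero otherwise. The key estimate is then a Frobenius bound on $U^\star-\hat U$: for $(i,j)\notin S_{P,l,\rho}$ we have $\mathrm{dist}(\vx_{P(i)},\vx_{P(j)})>\rho l_j$, and combining the two bounds of Theorem~\ref{thm: cholesky factor decay},
\[
|U^\star_{ij}|\;\le\; C\,l_M^{-s+d/2}\,l_j^{-2s}\exp\!\bigl(-\rho/C\bigr).
\]
Summing over $i$ with the help of a dyadic ball-packing argument (for fixed $j$, the number of points at distance in $[k\rho l_j,(k+1)\rho l_j]$ from $\vx_{P(j)}$ is $O((k\rho)^d)$, and the exponential then kills the $k$-sum) yields
\[
\|U^\star-\hat U\|_{\mathrm{Fro}}^{\,2}\;\le\; N\cdot C\,l_M^{-(6s-d)}\,\rho^{2d}\exp(-\rho/C).
\]
Because the homogeneity assumption forces $l_M\gtrsim N^{-1/d}$, the prefactor is polynomial in $N$, so $\|U^\star-\hat U\|_{\mathrm{Fro}}\le N^{\alpha}\exp(-\rho/C')$ for some $\alpha$ depending only on $d,s,\delta,\|\cL\|,\|\cL^{-1}\|$.

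Next I would convert this into a bound on $\|\Theta^{-1}-\hat U\hat U^{T}\|_{\mathrm{Fro}}$ via the telescoping identity
\[
\Theta^{-1}-\hat U\hat U^{T}=(U^\star-\hat U){U^\star}^{T}+\hat U(U^\star-\hat U)^{T},
\]
controlling the operator norms $\|U^\star\|_{\mathrm{op}}$ and $\|\hat U\|_{\mathrm{op}}$ by $\|\Theta^{-1}\|_{\mathrm{op}}^{1/2}$, which in turn is polynomial in $N$ by Proposition \ref{prop: eigenvalue lower bounds on K phi phi} (invoked already in the proof of Theorem \ref{thm: cholesky factor decay}). The lower eigenvalue bound on $\Theta$ needed later follows from standard fill-distance arguments for kernel interpolation on scattered data. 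Applying the first part of Lemma \ref{lemma: comparison between KL and Fro} to $\hat U$ gives
\[
\operatorname{KL}\bigl(\cN(0,\Theta)\,\|\,\cN(0,(\hat U\hat U^{T})^{-1})\bigr)\;\le\;\lambda_{\max}(\Theta)\,\|\Theta^{-1}-\hat U\hat U^{T}\|_{\mathrm{Fro}},
\]
and since $\hat U\in\mathcal{S}_{P,l,\rho}$, KL optimality of $U^\rho$ transfers the same bound to $U^\rho$. The second part of Lemma \ref{lemma: comparison between KL and Fro} then yields $\|\Theta^{-1}-U^\rho{U^\rho}^{T}\|_{\mathrm{Fro}}\le \lambda_{\min}(\Theta)^{-1}\operatorname{KL}(\cdot)$. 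For the final term, I use the resolvent identity
\[
\Theta-(U^\rho{U^\rho}^{T})^{-1}=-\Theta\bigl(U^\rho{U^\rho}^{T}-\Theta^{-1}\bigr)(U^\rho{U^\rho}^{T})^{-1},
\]
bounded in Frobenius norm by $\|\Theta\|_{\mathrm{op}}\|(U^\rho{U^\rho}^{T})^{-1}\|_{\mathrm{op}}\,\|\Theta^{-1}-U^\rho{U^\rho}^{T}\|_{\mathrm{Fro}}$; a small-perturbation argument (using the already-established smallness of $\Theta^{-1}-U^\rho{U^\rho}^{T}$) shows that $\|(U^\rho{U^\rho}^{T})^{-1}\|_{\mathrm{op}}\le 2\lambda_{\max}(\Theta)$ once $\rho$ is large enough.

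Combining everything, each of the three terms in the theorem is at most $N^{\beta}\exp(-\rho/C'')$ for constants $\beta,C''$ depending only on the stated parameters. Choosing $\rho\ge C\log(N/\epsilon)$ with $C$ large enough (and also large enough to satisfy the smallness hypotheses of Lemma \ref{lemma: comparison between KL and Fro}) drives the right-hand side below $\epsilon$, completing the proof. The main obstacle is really the bookkeeping in step one — propagating the polynomial-in-$1/l_j$ prefactors through the sum over $(i,j)\notin S_{P,l,\rho}$ while absorbing them into a benign $N^{\beta}$ factor that the exponential can still beat with $\rho=O(\log(N/\epsilon))$; the ball-packing dyadic decomposition and the uniform lower bound $l_M\gtrsim N^{-1/d}$ are the tools that make this work.
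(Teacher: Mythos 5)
Your proposal does not prove the statement in question. You were asked to prove Lemma~\ref{lemma: comparison between KL and Fro}, the two-sided comparison between $\operatorname{KL}\left(\cN(0,\Theta)\parallel\cN(0,(MM^T)^{-1})\right)$ and $\|\Theta^{-1}-MM^T\|_{\mathrm{Fro}}$, which is a standalone linear-algebraic/perturbation fact about positive definite matrices (the paper cites Lemma~B.8 of \cite{schafer2021sparse} for it and gives no proof). What you have written instead is a proof sketch of Theorem~\ref{thm: KL minimization accuracy}: you truncate $U^\star$ to the sparsity pattern, bound the tail in Frobenius norm using the decay from Theorem~\ref{thm: cholesky factor decay}, appeal to KL optimality of $U^\rho$, and then \emph{invoke} Lemma~\ref{lemma: comparison between KL and Fro} twice to pass between KL and Frobenius. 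That argument is circular with respect to the assigned target, since it treats the lemma as given.

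To actually prove Lemma~\ref{lemma: comparison between KL and Fro}, you would need to start from the explicit Gaussian KL formula \eqref{eqn: KL between two Gaussians}, write $MM^T\Theta = I + E$ with $E = (MM^T-\Theta^{-1})\Theta$, and Taylor-expand $\operatorname{tr}(E) - \log\det(I+E)$ around $E=0$. The second-order expansion $\operatorname{tr}(E)-\log\det(I+E) \approx \tfrac12\|E\|_{\mathrm{Fro}}^2$ is controlled (above and below, by uniform constants) once $\|E\|_{\mathrm{op}}$ is small, which is what the hypothesis $\lambda_{\max}\|\Theta^{-1}-MM^T\|_{\mathrm{Fro}}\le\eta$ guarantees. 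Then $\|E\|_{\mathrm{Fro}} \le \lambda_{\max}\|\Theta^{-1}-MM^T\|_{\mathrm{Fro}}$ and $\|E\|_{\mathrm{Fro}} \ge \lambda_{\min}\|\Theta^{-1}-MM^T\|_{\mathrm{Fro}}$ give the two directions. None of this machinery appears in your proposal; you should redo the argument along these lines rather than reproducing the downstream application.
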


With this lemma, we can prove Theorem \ref{thm: KL minimization accuracy}. The proof is similar to that of Theorem B.6 in \cite{schafer2021sparse}.
\begin{proof}[Proof of Theorem \ref{thm: KL minimization accuracy}]
    First, by a covering argument, we know $l_M = O(M^{-1/d}) = O(N^{-1/d})$. By Proposition \ref{prop: eigenvalue lower bounds on K phi phi} with $h = l_M/2$, we know that 
    \begin{equation}
    \label{eqn: lambda max min bound}
        \lambda_{\max}(\Theta)\leq C_1N \quad \mathrm{and}\quad  \lambda_{\min}(\Theta) \geq C_1N^{-2s/d+1}
    \end{equation}
    for some constant $C_1$ depending on $\delta(\{\vx_i\}_{i\in I}; \partial \Omega), d, s, \|\cL\|, \|\cL^{-1}\|, J$.

    Theorem \ref{thm: cholesky factor decay} implies that for $(i,j) \notin S_{P,l,\rho}$, it holds that
    \[|U_{ij}^\star| \leq C l_M^{-s+d/2}l_j^{-2s}\exp\left(-\frac{\mathrm{dist}(\vx_{P(i)}, \vx_{P(j)})}{Cl_j}\right) \leq CN^{\alpha} \exp\left(-\frac{\rho}{C}\right)\, , \]
    for some $\alpha$ depending on $s, d$, where $C$ is a generic constant that depends on $\Omega$, $\delta(\{\vx_i\}_{i\in I}; \partial \Omega), d, s, J, \|\cL\|, \|\cL^{-1}\|$; \blue{in fact $\alpha \leq 3s/d-1/2$, using that $l_j^{-1}, l_M^{-1} = O(N^{1/d})$}. Moreover, from the proof in the last subsection, we know that $|U_{ij}^*|\leq C N^{s/d - 1/2}$ for all $1\leq i \leq j \leq N$.

    Now, consider the upper triangular Cholesky factorization $\Theta^{-1}=U^\star {U^\star}^T$. Define $M^{\rho} \in \bR^{N \times N}$ such that
    \begin{equation}
        M^{\rho}_{ij} = \begin{cases}
    U^\star_{ij}, &\text{if } (i,j) \in S_{P,l,\rho}\\
    0, &\text{otherwise}\, .
\end{cases}
    \end{equation}

    Then, by using the above bounds on $U_{ij}^\star$, we know that there exists a constant $\beta$ depending on $s, d$, such that 
    \begin{equation}
    \label{eqn: U truncation err}
        \|\Theta^{-1} - M^{\rho}{M^{\rho}}^T\|_{\mathrm{Fro}} \leq CN^{\beta} \exp\left(-\frac{\rho}{C}\right)\, .
    \end{equation}
    \blue{Here, we can use a simple bound 
    \begin{equation*}
        \begin{aligned}
            &\|\Theta^{-1} - M^{\rho}{M^{\rho}}^T\|_{\mathrm{Fro}} = \|U^\star {U^\star}^T - M^{\rho}{M^{\rho}}^T\|_{\mathrm{Fro}} \\
            \leq &\sqrt{N^2\cdot N^2 \cdot N^{2s/d-1} \cdot C^2 N^{2\alpha}\exp(-\frac{2\rho}{C})}
        \end{aligned}
    \end{equation*}
    since there are $N^2$ entries in the matrix $U^\star {U^\star}^T - M^{\rho}{M^{\rho}}^T$, each entry is the summation of at most $N$ terms, and each term is of the form $U_{ij}^\star U_{lk}^\star$, one of which must be exponentially decaying and bounded by $CN^{\alpha} \exp\left(-\frac{\rho}{C}\right)$ and another is always bounded by $O(l_M^{-s+d/2}) = O(N^{s/d-1/2})$ due to the result at the end of the section \ref{appendix: Proof of thm: cholesky factor decay}. Using the Cauchy-Swarchz inequality, we get that each entry of the matrix is bounded by $\sqrt{N^2 \cdot N^{2s/d-1} \cdot C^2 N^{2\alpha}\exp(-\frac{2\rho}{C})}$; squaring, summing, and then taking square root leads to the final bound. Consequently, we can take $\beta =  s/d+3/2+\alpha \leq 4s/d+1$.}
    
    Since $\lambda_{\max}(\Theta) \leq C_1N$, we know that there exists a constant $C'$, such that when $\rho \geq C'\log(N)$,  
    \[\lambda_{\max}(\Theta)\|\Theta^{-1} - M^{\rho}{M^{\rho}}^T\|_{\mathrm{Fro}} \leq \eta\, ,\]
    for the $\eta$ defined in Lemma \ref{lemma: comparison between KL and Fro}. Using Lemma \ref{lemma: comparison between KL and Fro}, we get
\[\operatorname{KL}\left(\cN(0,\Theta)\parallel\cN(0,(M^{\rho}{M^{\rho}}^T)^{-1})\right) \leq \lambda_{\max}(\Theta) \|\Theta^{-1} - M^{\rho}{M^{\rho}}^T\|_{\mathrm{Fro}}\, .\]
    By the KL optimality, the optimal solution $U^{\rho}$ will satisfy
    \begin{equation}
    \label{eqn: KL upper bound}
        \operatorname{KL}\left(\cN(0,\Theta)\parallel\cN(0,(U^{\rho}{U^{\rho}}^T)^{-1})\right) \leq \lambda_{\max}(\Theta) \|\Theta^{-1} - M^{\rho}{M^{\rho}}^T\|_{\mathrm{Fro}}\, .
    \end{equation}
    \blue{This means that \[ \operatorname{KL}\left(\cN(0,\Theta)\parallel\cN(0,(U^{\rho}{U^{\rho}}^T)^{-1})\right) \leq C_1CN^{\beta+1}\exp(-\frac{\rho}{C})\leq C_1CN^{4s/d+2}\exp(-\frac{\rho}{C})\, .\]}
    Again, by Lemma \ref{lemma: comparison between KL and Fro}, we get
    \begin{equation}
    \label{eqn: theta inv Fro upper bounds}
        \|\Theta^{-1} - U^{\rho}{U^{\rho}}^T\|_{\mathrm{Fro}} \leq \frac{\lambda_{\max}(\Theta)}{\lambda_{\min}(\Theta)} \|\Theta^{-1} - M^{\rho}{M^{\rho}}^T\|_{\mathrm{Fro}} \blue{\leq CN^{6s/d+1}\exp(-\frac{\rho}{C})} \, .
    \end{equation}
Moreover, 
\begin{equation}
\label{eqn: theta fro upper bounds}
    \|\Theta - (U^{\rho}{U^{\rho}}^T)^{-1}\|_{\mathrm{Fro}} \leq \|\Theta\|_{\mathrm{Fro}} \|\Theta^{-1} - U^{\rho}{U^{\rho}}^T\|_{\mathrm{Fro}} \|(U^{\rho}{U^{\rho}}^T)^{-1}\|_{\mathrm{Fro}}\, ,
\end{equation}
Using the above estimates
, we know that there exists a constant $C''$ depending on $\Omega, \delta(\{\vx_i\}_{i\in I}; \partial \Omega), d, s, J, \|\cL\|,$ $\|\cL^{-1}\|$, such that when $\rho \geq C'' \log(N/\epsilon)$, it holds that
\begin{equation*}
    \operatorname{KL}\left(\cN(0,\Theta)\parallel\cN(0,(U^{\rho} {U^{\rho}}^T)^{-1})\right) + \|\Theta^{-1} - U^{\rho} {U^{\rho}}^T\|_{\mathrm{Fro}} + \|\Theta - (U^{\rho} {U^{\rho}}^T)^{-1}\|_{\mathrm{Fro}} \leq \epsilon\, .
\end{equation*}
\blue{Such $C''$ depends on $C$, the generic constant in Theorem 4.1, which relies on the analysis in Theorem \ref{theorem: exp decay of Gamblets}, for which we do not track a tight constant in the scope of this work. These constants are at most polynomially on $s,d$, which can be suppressed by the exponential decay term.

The above bound shows that with $\rho$ logarithmic on $N/\epsilon$, we can get an approximation error bounded by $\epsilon$. 
}
\end{proof}
\subsection{Connections between Cholesky factors, conditional covariance, conditional expectation, and Gamblets}
\label{appendix: Connections between Cholesky factors, conditional covariance, conditional expectation, and Gamblets}
We first present two lemmas. 

The first lemma is about the inverse of a block matrix \cite{lu2002inverses}.
\begin{lemma}
\label{lemma: Theta inv formula}
    For a positive definite matrix $\Theta \in \bR^{N\times N}$, if we write it in the block form
    \begin{equation}
         \Theta = \begin{pmatrix}
    \Theta_{YY} & \Theta_{YZ} \\
    \Theta_{ZY} & \Theta_{ZZ} 
    \end{pmatrix}\, ,
    \end{equation}
    then, $\Theta^{-1}$ equals
    \begin{equation}
          \begin{pmatrix}
    (\Theta_{YY}-\Theta_{YZ}\Theta_{ZZ}^{-1}\Theta_{ZY})^{-1} & -\Theta_{YY}^{-1}\Theta_{YZ}(\Theta_{ZZ} - \Theta_{ZY}\Theta_{YY}^{-1}\Theta_{YZ})^{-1} \\
    -\Theta_{ZZ}^{-1}\Theta_{ZY}(\Theta_{YY}-\Theta_{YZ}\Theta_{ZZ}^{-1}\Theta_{ZY})^{-1} & (\Theta_{ZZ} - \Theta_{ZY}\Theta_{YY}^{-1}\Theta_{YZ})^{-1}
    \end{pmatrix}\, .
    \end{equation}
\end{lemma}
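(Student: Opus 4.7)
The plan is to establish the formula by the standard block LDU factorization of $\Theta$ and invert factor-by-factor; since both Schur complements appear in the claimed formula, we will run the argument in two symmetric ways and match pieces.

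First I would record that, because $\Theta$ is positive definite, both diagonal blocks $\Theta_{ZZ}$ and $\Theta_{YY}$ are positive definite, hence invertible. Consequently the Schur complements
\begin{equation*}
S_Y := \Theta_{YY}-\Theta_{YZ}\Theta_{ZZ}^{-1}\Theta_{ZY}, \qquad S_Z := \Theta_{ZZ} - \Theta_{ZY}\Theta_{YY}^{-1}\Theta_{YZ}
\end{equation*}
are well-defined, symmetric, and positive definite (so invertible). This is the only place positive definiteness enters; everything else is formal algebra.

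Next I would write the block LDU factorization
\begin{equation*}
\Theta = \begin{pmatrix} I & 0 \\ \Theta_{YZ}\Theta_{ZZ}^{-1} & I \end{pmatrix}\begin{pmatrix} \Theta_{ZZ} & 0 \\ 0 & S_Y \end{pmatrix}\begin{pmatrix} I & \Theta_{ZZ}^{-1}\Theta_{ZY} \\ 0 & I \end{pmatrix}
\end{equation*}
and invert the three factors in reverse order. The outer unit-triangular factors have explicit inverses obtained by negating the off-diagonal block, and the middle block-diagonal factor inverts entry-wise. Multiplying out, the $(1,1)$ block becomes $\Theta_{ZZ}^{-1} + \Theta_{ZZ}^{-1}\Theta_{ZY} S_Y^{-1}\Theta_{YZ}\Theta_{ZZ}^{-1}$, the $(1,2)$ and $(2,1)$ blocks become $-\Theta_{ZZ}^{-1}\Theta_{ZY}S_Y^{-1}$ and $-S_Y^{-1}\Theta_{YZ}\Theta_{ZZ}^{-1}$, and the $(2,2)$ block is $S_Y^{-1}$. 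This already matches the $(2,1)$ and $(2,2)$ blocks of the claimed formula (once one uses the symmetry $\Theta_{ZY}^{T} = \Theta_{YZ}$ when needed).

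To recover the claimed forms of the $(1,1)$ and $(1,2)$ blocks, I would repeat the same procedure with the roles of $Z$ and $Y$ interchanged, giving an alternative LDU decomposition whose inverse has $(1,1)$ block equal to $S_Z^{-1}$ and $(1,2)$ block equal to $-\Theta_{YY}^{-1}\Theta_{YZ}\,S_Z^{-1}$ (after the appropriate relabeling). Since $\Theta^{-1}$ is unique, the two expressions for each block must agree, and assembling the matching pieces yields exactly the formula in the statement.

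The only mild obstacle is bookkeeping: one must be careful that the appearance of $(\Theta_{YY}-\Theta_{YZ}\Theta_{ZZ}^{-1}\Theta_{ZY})^{-1}$ in both the $(1,1)$ and $(2,1)$ entries of the claimed formula (and likewise for $S_Z^{-1}$) is consistent, i.e., that our two LDU derivations produce precisely the stated combination of Schur complements in each block, rather than, e.g., the Woodbury-equivalent form in the $(1,1)$ entry. Verifying this consistency is a short direct calculation using the Woodbury identity $S_Z^{-1} = \Theta_{ZZ}^{-1} + \Theta_{ZZ}^{-1}\Theta_{ZY}S_Y^{-1}\Theta_{YZ}\Theta_{ZZ}^{-1}$, which itself follows from comparing the two LDU factorizations. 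As an alternative (and perhaps cleaner) route, one can skip the uniqueness argument and instead verify the claimed formula by direct block multiplication $\Theta\cdot \Theta^{-1} = I$, where every block identity reduces to the definitions of $S_Y$ and $S_Z$.
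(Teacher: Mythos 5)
The paper does not actually prove this lemma; it just cites \cite{lu2002inverses}, so any self-contained argument is already doing more than the paper. Your block-LDU plan is the right route, and the ``verify directly that $\Theta\cdot\Theta^{-1}=I$'' alternative you mention at the end is equally standard and fine.

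However, there is a genuine bookkeeping gap that traces back to a typo in the paper's displayed block form of $\Theta$ (note the repeated $\Theta_{ZZ}$ on both diagonals). You read the block form at face value, with $\Theta_{ZZ}$ in the $(1,1)$ position, and pivot on $\Theta_{ZZ}$. Your LDU is correct for the matrix $\begin{pmatrix}\Theta_{ZZ}&\Theta_{ZY}\\\Theta_{YZ}&\Theta_{YY}\end{pmatrix}$ and yields
\begin{equation*}
\Theta^{-1}=\begin{pmatrix}
S_Z^{-1} & -\Theta_{ZZ}^{-1}\Theta_{ZY}S_Y^{-1}\\
-S_Y^{-1}\Theta_{YZ}\Theta_{ZZ}^{-1} & S_Y^{-1}
\end{pmatrix},
\end{equation*}
with $S_Z^{-1}$ in position $(1,1)$ and $S_Y^{-1}$ in position $(2,2)$. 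You then assert that this ``already matches the $(2,1)$ and $(2,2)$ blocks of the claimed formula,'' but it does not: the claimed $(2,2)$ entry is $S_Z^{-1}$, not $S_Y^{-1}$, and the claimed $(2,1)$ entry $-\Theta_{ZZ}^{-1}\Theta_{ZY}S_Y^{-1}$ is the transpose of your computed $(2,1)$ entry (it equals your computed $(1,2)$ entry, not $(2,1)$). In other words, your computed inverse is the claimed inverse with block rows and columns swapped; for the block matrix you pivoted on, the claimed formula is simply incorrect. The claimed formula is the inverse of $\begin{pmatrix}\Theta_{YY}&\Theta_{YZ}\\\Theta_{ZY}&\Theta_{ZZ}\end{pmatrix}$, which is also how the lemma is applied in the proof of Lemma~\ref{lemma: cond cov formula}, where $Y=(X_1,X_2)$ is the leading block. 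To fix the argument, pivot on $\Theta_{YY}$ in the top-left, i.e.,
\begin{equation*}
\Theta=\begin{pmatrix}I&0\\\Theta_{ZY}\Theta_{YY}^{-1}&I\end{pmatrix}
\begin{pmatrix}\Theta_{YY}&0\\0&S_Z\end{pmatrix}
\begin{pmatrix}I&\Theta_{YY}^{-1}\Theta_{YZ}\\0&I\end{pmatrix},
\end{equation*}
invert factor by factor, and then use the identity $S_Z^{-1}\Theta_{ZY}\Theta_{YY}^{-1}=\Theta_{ZZ}^{-1}\Theta_{ZY}S_Y^{-1}$ (which, as you note, follows from comparing the two LDU factorizations or from Woodbury) to bring the $(2,1)$ entry into the form stated. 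With that correction, your two-LDU plus uniqueness strategy does close up; as written, the matching steps are wrong.
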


The second lemma is about the relation between the conditional covariance of a Gaussian random vector and the precision matrix.
\begin{lemma}
\label{lemma: cond cov formula}
    Let $N \geq 3$ and $X = (X_1, ..., X_N)$ be a $\cN(0,\Theta)$ Gaussian vector in $\bR^N$. Let $Y=(X_1,X_2)$ and $Z=(X_3,...,X_N)$. Then,
    \begin{equation}
        \mathrm{Cov}(Y_1,Y_2|Z) = \frac{-A_{12}}{A_{11}A_{22} - A_{12}^2}\, ,
    \end{equation}
    and
    \begin{equation}
        \mathrm{Var}(Y_1|Z) = \frac{A_{22}}{A_{11}A_{22} - A_{12}^2}\, ,
    \end{equation}
    where $A = \Theta^{-1}$. Therefore
    \begin{equation}
        \frac{\mathrm{Cov}(Y_1,Y_2|Z)}{\mathrm{Var}(Y_1|Z)} = \frac{-A_{12}}{A_{22}}\, .
    \end{equation}
\end{lemma}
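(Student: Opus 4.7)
\textbf{Proof plan for Lemma \ref{lemma: cond cov formula}.} The plan is to reduce the problem to inverting the $2\times 2$ top-left block of $A$ by combining the Gaussian conditioning formula with the block-inverse identity of Lemma \ref{lemma: Theta inv formula}. Since $X$ is centered Gaussian, it is classical that the conditional distribution of $Y$ given $Z$ is again Gaussian with conditional covariance given by the Schur complement
\begin{equation*}
\mathrm{Cov}(Y\mid Z) \;=\; \Theta_{YY} - \Theta_{YZ}\Theta_{ZZ}^{-1}\Theta_{ZY}\, ,
\end{equation*}
where $\Theta$ is partitioned conformally with $(Y,Z)$. I would cite this as a standard Gaussian conditioning fact and skip the derivation.

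Next I would invoke Lemma \ref{lemma: Theta inv formula} with the same block partition. Reading off the top-left block of $\Theta^{-1}$ from that lemma gives exactly
\begin{equation*}
A_{YY} \;=\; \bigl(\Theta_{YY} - \Theta_{YZ}\Theta_{ZZ}^{-1}\Theta_{ZY}\bigr)^{-1}\, ,
\end{equation*}
so that $\mathrm{Cov}(Y\mid Z) = (A_{YY})^{-1}$, where $A_{YY}$ is the $2\times 2$ matrix with entries $A_{11}, A_{12}, A_{21}=A_{12}, A_{22}$ (using symmetry of $A$, which follows from symmetry of $\Theta$).

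The final step is the explicit $2\times 2$ inverse
\begin{equation*}
\begin{pmatrix} A_{11} & A_{12} \\ A_{12} & A_{22}\end{pmatrix}^{-1}
=\frac{1}{A_{11}A_{22}-A_{12}^{2}}\begin{pmatrix} A_{22} & -A_{12}\\ -A_{12} & A_{11}\end{pmatrix}\, ,
\end{equation*}
which is well-defined because positive definiteness of $\Theta$ implies positive definiteness of $A$ and hence of its principal $2\times 2$ submatrix, giving $A_{11}A_{22}-A_{12}^{2}>0$. Reading off the $(1,1)$ and $(1,2)$ entries yields the stated formulas for $\mathrm{Var}(Y_1\mid Z)$ and $\mathrm{Cov}(Y_1,Y_2\mid Z)$, and their ratio collapses to $-A_{12}/A_{22}$.

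There is no real obstacle here: the only ingredients are the Gaussian conditioning identity, the block-inverse identity supplied by the preceding lemma, and a direct $2\times 2$ inversion. The one point worth flagging is the invertibility of the Schur complement (equivalently of $A_{YY}$), which I would justify in one line via the positive definiteness of $\Theta$.
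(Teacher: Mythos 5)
Your proof is correct and uses essentially the same argument as the paper: conditional covariance equals the Schur complement, which by Lemma \ref{lemma: Theta inv formula} is the inverse of the $2\times 2$ top-left block of $A=\Theta^{-1}$, and a direct $2\times 2$ inversion finishes. The extra remarks on $A_{21}=A_{12}$ and on positive definiteness ensuring invertibility are correct and slightly more careful than the paper's version, but do not change the route.
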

\begin{proof}
    First, we know that $\mathrm{Cov}(Y|Z) = \Theta_{YY}- \Theta_{YZ}\Theta_{ZZ}^{-1}\Theta_{ZY}$. By Lemma \ref{lemma: Theta inv formula}, we have 
    \begin{equation}
        \Theta_{YY}- \Theta_{YZ}\Theta_{ZZ}^{-1}\Theta_{ZY} = \begin{pmatrix}
    A_{11} & A_{12} \\
    A_{21} & A_{22}
    \end{pmatrix}^{-1}\, .
    \end{equation}
    Inverting this matrix leads to the desired result.
\end{proof}

With these lemmas, we can show the connection between the Cholesky factors and the conditional covariance as follows.
\begin{proposition}
\label{prop: connect Cholesky factor to cond var}
    Consider the positive definite matrix $\Theta \in \bR^{N\times N}$. Suppose the upper triangular Cholesky factorization of its inverse is $U$, such that $\Theta^{-1} = UU^T$. Let $Y$ be a Gaussian vector with law $\cN(0, \Theta)$. Then, we have
    \begin{equation}
    \label{eqn: factor, cond var}
        \frac{U_{ij}}{U_{jj}} = (-1)^{i\neq j} \frac{\mathrm{Cov}[Y_i,Y_j|Y_{1:j-1\backslash \{i\}}]}{\mathrm{Var}[Y_i|Y_{1:j-1\backslash \{i\}}]}, \quad i \leq j\, .
    \end{equation}
\end{proposition}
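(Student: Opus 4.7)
The plan is to relate both sides of the identity to entries of the precision matrix $A^{(j)} := (\Theta^{(j)})^{-1}$ of the sub-vector $Y_{1:j}$, where $\Theta^{(j)}$ denotes the leading $j \times j$ principal block of $\Theta$. For the right-hand side, I would apply Lemma~\ref{lemma: cond cov formula} to the Gaussian vector $Y_{1:j}$ with the roles of $(Y_1, Y_2, Z)$ played by $(Y_i, Y_j, Y_{1:j-1 \backslash \{i\}})$ — strictly speaking the lemma is stated with indices $1,2$, but the same $2 \times 2$ block-inverse computation works for any pair — yielding
$$\frac{\mathrm{Cov}(Y_i, Y_j \mid Y_{1:j-1 \backslash \{i\}})}{\mathrm{Var}(Y_i \mid Y_{1:j-1 \backslash \{i\}})} = -\frac{(A^{(j)})_{ij}}{(A^{(j)})_{jj}}.$$
The remaining task is to show $(A^{(j)})_{ij} = U_{ij} U_{jj}$ and $(A^{(j)})_{jj} = U_{jj}^2$ for $i < j$; substituting these into the display above and absorbing the sign $(-1)^{i \neq j}$ then gives \eqref{eqn: factor, cond var}. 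The case $i = j$ is handled separately since both sides are trivially $1$.

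To prove these sub-precision identities, the idea is to compute the conditional law of $Y_j$ given $Y_{1:j-1}$ in two different ways and compare. On the precision side, standard Gaussian conditioning applied to $Y_{1:j}$ gives
$$\mathbb{E}[Y_j \mid Y_{1:j-1}] = -\sum_{m < j}\frac{(A^{(j)})_{jm}}{(A^{(j)})_{jj}} Y_m, \qquad \mathrm{Var}(Y_j \mid Y_{1:j-1}) = \frac{1}{(A^{(j)})_{jj}}.$$
On the factor side, I would set $L := U^T$, which is lower triangular with $L \Theta L^T = I$, so that $L Y$ is a standard Gaussian vector. Because $L$ is lower triangular with nonzero diagonal, $(LY)_{1:j-1}$ generates the same $\sigma$-algebra as $Y_{1:j-1}$, while $(LY)_j = U_{jj} Y_j + \sum_{k<j} U_{kj} Y_k$ is independent of $(LY)_{1:j-1}$ with unit variance. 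Solving for $Y_j$ yields
$$\mathbb{E}[Y_j \mid Y_{1:j-1}] = -\sum_{k<j} \frac{U_{kj}}{U_{jj}} Y_k, \qquad \mathrm{Var}(Y_j \mid Y_{1:j-1}) = \frac{1}{U_{jj}^2}.$$
Matching the two expressions coefficient-by-coefficient forces $(A^{(j)})_{jj} = U_{jj}^2$ and $(A^{(j)})_{jm} = U_{mj} U_{jj}$ for $m < j$; by symmetry of $A^{(j)}$, this gives $(A^{(j)})_{ij} = U_{ij} U_{jj}$ for $i < j$, closing the argument.

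The main conceptual subtlety is that the leading principal block of the precision matrix, $(\Theta^{(j)})^{-1}$, is \emph{not} the leading principal block of $\Theta^{-1} = UU^T$, since inversion does not commute with sub-block extraction; one therefore cannot read off entries of $A^{(j)}$ from $U$ by naive block arithmetic. The innovation calculation above circumvents this by characterizing $A^{(j)}$ through the conditional distribution of $Y_j$ given $Y_{1:j-1}$, which is determined by the joint law of $Y_{1:j}$ alone and is accessible from $U$ because $(LY)_j$ depends only on $Y_{1:j}$.
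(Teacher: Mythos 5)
Your proof is correct, and it takes a genuinely different route from the paper. The paper works by downward induction on $j$: it first handles $j=N$ directly by noting that the last column of the upper Cholesky factor of $A=\Theta^{-1}$ is $A_{:,N}/\sqrt{A_{NN}}$ and then invoking Lemma~\ref{lemma: cond cov formula}; for $j<N$ it applies the block-inverse identity (Lemma~\ref{lemma: Theta inv formula}) to show that the Schur complement $A_{1:N-1,1:N-1}-A_{1:N-1,N}A_{NN}^{-1}A_{N,1:N-1}$ equals $(\Theta_{1:N-1,1:N-1})^{-1}$, which is exactly the residual block in the Cholesky recursion, so that $U_{1:N-1,1:N-1}$ is the upper Cholesky factor of the truncated precision matrix, and the induction closes. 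Your argument reaches the same intermediate facts $(A^{(j)})_{jj}=U_{jj}^2$ and $(A^{(j)})_{ij}=U_{ij}U_{jj}$ but derives them probabilistically, via the innovation representation $LY\sim\cN(0,I)$ with $L=U^T$ and a coefficient-matching computation for the conditional law of $Y_j$ given $Y_{1:j-1}$; you then apply Lemma~\ref{lemma: cond cov formula} to the sub-vector $Y_{1:j}$ exactly as the paper applies it at the top level. The two routes buy different things: the paper's inductive/Schur-complement argument is purely matrix-algebraic and leans on the already-stated block-inverse lemma, which keeps the whole appendix self-contained in linear algebra; your innovation argument avoids induction and the explicit Schur complement, replacing them with a single observation about the filtered structure of $LY$, which is shorter and arguably more transparent in the GP context the paper is built around. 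You also correctly flag the key subtlety that $(\Theta^{(j)})^{-1}$ is not a principal block of $\Theta^{-1}$ and explain how your argument sidesteps it. One minor presentational point: it would be cleaner to state explicitly that coefficient matching is legitimate because $\Theta_{1:j-1,1:j-1}\succ 0$ makes $Y_1,\dots,Y_{j-1}$ linearly independent as random variables, so the linear representation of $\bE[Y_j\mid Y_{1:j-1}]$ in terms of them is unique.
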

\begin{proof}
We only need to consider the case $i\neq j$.
    We consider $j = N$ first. Denote $A = \Theta^{-1}$. By the definition of the Cholesky factorization $A=UU^{T}$, we know that $U_{:,N} = A_{:,N}/\sqrt{A_{NN}}$. Thus, 
    \[\frac{U_{iN}}{U_{NN}} = \frac{A_{iN}}{A_{NN}}\, .\]
    For $i \neq N$, by Lemma \ref{lemma: cond cov formula}, we have
    \begin{equation}
        \frac{\mathrm{Cov}[Y_i,Y_N|Y_{1:N-1\backslash \{i\}}]}{\mathrm{Var}[Y_i|Y_{1:N-1\backslash \{i\}}]} = \frac{-A_{iN}}{A_{NN}}\, .
    \end{equation}
    By comparing the above two identities, we obtain the result holds for $j=N$. 
    
    For $j < N$, we can use mathematical induction. Note that $U_{1:N-1,1:N-1}$ is also the upper triangular Cholesky factor of $(\Theta_{1:N-1,1:N-1})^{-1}$, noting the fact (by applying Lemma \ref{lemma: Theta inv formula} to the matrix $A$) that 
    \[(\Theta_{1:N-1,1:N-1})^{-1} = A_{1:N-1,1:N-1} - A_{1:N-1,N}A_{NN}^{-1}A_{N,1:N-1}\]
    is the Schur complement, which is exactly the residue block in the Cholesky factorization. Therefore, applying the result for $j=N$ to the matrix $\Theta_{1:N-1,1:N-1}$, we prove \eqref{eqn: factor, cond var} holds for $j=N-1$ as well. Iterating this process to $j=1$ finishes the proof.
\end{proof}

Furthermore, the conditional covariance is related to the conditional expectation, as follows.
\begin{proposition}
\label{prop: connect cond var to cond expect}
    For a Gaussian vector $Y \sim \cN(0, \Theta)$, we have 
    \begin{equation}
    \frac{\mathrm{Cov}[Y_i,Y_j|Y_{1:j-1\backslash \{i\}}]}{\mathrm{Var}[Y_i|Y_{1:j-1\backslash \{i\}}]} = \bE[Y_j|Y_i = 1, Y_{1:j-1\backslash \{i\}} = 0]\, .
\end{equation}
\end{proposition}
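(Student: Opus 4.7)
The plan is to exploit the affine-Gaussian structure of the joint distribution. Since the subvector $(Y_j, Y_i, Y_{1:j-1\backslash\{i\}})$ is jointly Gaussian with mean zero, the map
\[
(y_i, y_{-i}) \mapsto \bE[Y_j \mid Y_i = y_i,\ Y_{1:j-1\backslash\{i\}} = y_{-i}]
\]
is linear: it equals $a\, y_i + b^\top y_{-i}$ for some $a \in \bR$ and $b \in \bR^{j-2}$, given by the standard Gaussian conditioning formula. Evaluating at $y_i = 1$, $y_{-i} = 0$ immediately yields the right-hand side $\bE[Y_j \mid Y_i = 1, Y_{1:j-1\backslash\{i\}} = 0] = a$, so the whole task reduces to identifying $a$ as the partial regression coefficient $\mathrm{Cov}[Y_i,Y_j \mid Y_{1:j-1\backslash\{i\}}] / \mathrm{Var}[Y_i \mid Y_{1:j-1\backslash\{i\}}]$ — essentially the Frisch--Waugh--Lovell identity specialized to Gaussians.

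To identify $a$, I would perform Gram--Schmidt in $L^2$ within the conditioning block. Define the residual
\[
\tilde{Y}_i := Y_i - \bE[Y_i \mid Y_{1:j-1\backslash\{i\}}].
\]
Because $(Y_1,\ldots,Y_{j-1})$ is jointly Gaussian, $\tilde{Y}_i$ is \emph{independent} (not merely uncorrelated) of $Y_{1:j-1\backslash\{i\}}$, while generating the same $\sigma$-algebra when combined with $Y_{1:j-1\backslash\{i\}}$ as $(Y_i, Y_{1:j-1\backslash\{i\}})$ does. Using this independence together with the linearity of conditional expectation for Gaussians yields the orthogonal decomposition
\[
\bE[Y_j \mid Y_i, Y_{1:j-1\backslash\{i\}}] \;=\; \bE[Y_j \mid Y_{1:j-1\backslash\{i\}}] \;+\; \frac{\mathrm{Cov}(Y_j, \tilde{Y}_i)}{\mathrm{Var}(\tilde{Y}_i)}\, \tilde{Y}_i.
\]
The remaining step is to rewrite the two Gaussian-specific quantities: $\mathrm{Var}(\tilde{Y}_i) = \mathrm{Var}[Y_i \mid Y_{1:j-1\backslash\{i\}}]$ by definition of the best linear predictor, and $\mathrm{Cov}(Y_j, \tilde{Y}_i) = \mathrm{Cov}[Y_j, Y_i \mid Y_{1:j-1\backslash\{i\}}]$, where the last equality uses the standard fact that for jointly Gaussian vectors the conditional covariance is deterministic and coincides with the covariance of residuals after regressing on the conditioning variables.

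To conclude, I would substitute $Y_i = 1$ and $Y_{1:j-1\backslash\{i\}} = 0$ into the decomposition. The first term vanishes because $\bE[Y_j \mid Y_{1:j-1\backslash\{i\}} = 0] = 0$ (linear in the zero vector), and $\tilde{Y}_i$ evaluates to $1$ because $\bE[Y_i \mid Y_{1:j-1\backslash\{i\}} = 0] = 0$ as well. The only subtlety worth noting — and not really an obstacle — is that $\mathrm{Var}[Y_i \mid Y_{1:j-1\backslash\{i\}}] > 0$, which holds because $\Theta$ is positive definite (so all conditional variances of single coordinates are strictly positive). With this, the displayed identity follows. No deep step is required; the proof is essentially a clean two-line application of Gaussian orthogonal regression, and the main care is just bookkeeping the indices.
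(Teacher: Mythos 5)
Your proof is correct, and it reaches the identity by a slightly different organization than the paper. The paper first conditions out $Y_{1:j-1\backslash\{i\}}$ — producing a (degenerate) Gaussian vector $Z$ — and then applies a clean bivariate identity $\bE[Z_v\mid Z_w=1]=\mathrm{Cov}(Z_v,Z_w)/\mathrm{Var}(Z_w)$, itself proved by observing that $Z_v-\tfrac{\mathrm{Cov}(Z_v,Z_w)}{\mathrm{Var}(Z_w)}Z_w$ is independent of $Z_w$. You instead keep everything in the joint space and perform a Gram--Schmidt step: you orthogonalize $Y_i$ against $Y_{1:j-1\backslash\{i\}}$ to get the residual $\tilde Y_i$, argue independence of $\tilde Y_i$ from the conditioning block, and read off the partial-regression coefficient from the explicit orthogonal decomposition of $\bE[Y_j\mid Y_i,Y_{1:j-1\backslash\{i\}}]$ — the Frisch--Waugh--Lovell viewpoint. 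Both arguments rest on the same kernel fact (Gaussian residuals are independent of the variables they were regressed on), so neither is more general; the paper's version is a touch more modular (a standalone two-variable lemma reused on a conditional law), while yours is more explicit about the structure of the conditional mean and makes the appearance of $\mathrm{Cov}[Y_i,Y_j\mid\cdot]/\mathrm{Var}[Y_i\mid\cdot]$ as a regression coefficient immediate. Your check that $\mathrm{Var}[Y_i\mid Y_{1:j-1\backslash\{i\}}]>0$ under $\Theta\succ 0$ is the right hypothesis to flag; the paper handles this implicitly via its $\mathrm{Var}[Z_w]\neq 0$ proviso.
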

\begin{proof}
    We show that for any zero-mean Gaussian vector $Z$ in dimension $d$, and any vectors $v, w \in \bR^d$ (such that $\mathrm{Var}[Z_w] \neq 0$), it holds that
    \begin{equation}
    \label{eqn: D 19 cov to E}
        \frac{\mathrm{Cov}[Z_v, Z_w]}{\mathrm{Var}[Z_w]} = \bE[Z_v|Z_w = 1]\, ,
    \end{equation}
    where $Z_v = \langle Z, v\rangle$ and $Z_w = \langle Z, w\rangle$. Indeed this can be verified by direct calculations. We have
    \[\mathrm{Cov}(Z_v - \frac{\mathrm{Cov}[Z_v, Z_w]}{\mathrm{Var}[Z_w]}Z_w, Z_w) = 0\, ,  \]
    which implies they are independent since they are joint Gaussians. This yields
    \[\bE[Z_v -\frac{\mathrm{Cov}[Z_v, Z_w]}{\mathrm{Var}[Z_w]}Z_w|Z_w = 1] = 0\, ,\]
    which then implies \eqref{eqn: D 19 cov to E} holds. 

    Now, we set $Z$ to be the Gaussian vectors $Y$ conditioned on $Y_{1:j-1\backslash \{i\}} = 0$. It is still a Gaussian vector (with a degenerate covariance matrix). Applying \eqref{eqn: D 19 cov to E} with $v = \textbf{e}_j, w = \textbf{e}_i$, we get the desired result.
\end{proof}

Finally, the conditional expectation is connected to a variational problem. The following proposition is taken from \cite{owhadi2017multigrid, owhadi2019operator}. One can understand the result as the maximum likelihood estimator for the GP conditioned on the constraint coincides with the conditional expectation since the distribution is Gaussian. Mathematically, it can be proved by writing down the explicit formula for the two problems directly.
\begin{proposition}
\label{prop: connect cond expect to Gamblets}
    For a Gaussian process $\xi \sim \cN(0,\cL^{-1})$ where $\cL: H^s_0(\Omega) \to H^{-s}(\Omega)$ satisfies Assumption \ref{assumption on operator}. Then, for some linearly independent measurements $\phi_1,...,\phi_l \in H^{-s}(\Omega)$, the conditional expectation \[\psi^\star(\vx) := \bE\left[\xi(\vx)|[\xi, \phi_i] = c_i, 1\leq i \leq l\right]\] 
    is the solution to the following variational problem:
     \begin{equation}
     \label{eqn: gamblets variational problem}
    \begin{aligned}
    \psi^\star = \mathrm{argmin}_{\psi \in H_0^s(\Omega)}\quad  &[\psi, \cL\psi] \\
     \mathrm{subject\ to}\quad &[\psi, \phi_i] = c_i\ \  \mathrm{for}\ \  1 \leq i \leq l\, .
 \end{aligned}
    \end{equation}
\end{proposition}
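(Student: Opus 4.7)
The plan is to prove the identity by computing both sides explicitly and verifying they agree. For the left-hand side, I would invoke the standard Gaussian conditioning formula for jointly Gaussian variables; for the right-hand side, I would solve the constrained quadratic program via Lagrange multipliers. Both computations should yield the same closed-form expression, namely $K(\vx, \phi) K(\phi, \phi)^{-1} c$, where $K(\vx, \phi)_i = [\updelta_\vx, \cL^{-1}\phi_i]$, $K(\phi,\phi)_{ij} = [\phi_i, \cL^{-1}\phi_j]$, and $c = (c_1,\ldots,c_l)^T$.

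First, I would establish the covariance structure. Since $\xi \sim \cN(0,\cL^{-1})$, the covariance of $[\xi,\phi_i]$ and $[\xi,\phi_j]$ equals $[\phi_i,\cL^{-1}\phi_j]$, and the covariance of $\xi(\vx) = [\xi,\updelta_\vx]$ with $[\xi,\phi_i]$ equals $[\updelta_\vx,\cL^{-1}\phi_i]$ (the Dirac being in $H^{-s}(\Omega)$ thanks to $s>d/2$). Linear independence of $\{\phi_i\}$ combined with the positive definiteness of $\cL^{-1}$ from Assumption \ref{assumption on operator} implies $K(\phi,\phi)$ is strictly positive definite, hence invertible. Applying the standard conditional mean formula for zero-mean jointly Gaussian $(X,Y)$, $\bE[X|Y=y] = \Sigma_{XY}\Sigma_{YY}^{-1}y$, to the vector $(\xi(\vx),[\xi,\phi_1],\ldots,[\xi,\phi_l])$ gives
\[\bE[\xi(\vx)\,|\,[\xi,\phi_i]=c_i,\,1\leq i\leq l] = K(\vx,\phi)\,K(\phi,\phi)^{-1}c\, .\]

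Second, I would solve \eqref{eqn: gamblets variational problem} via Lagrange multipliers. The quadratic form $\psi \mapsto [\psi,\cL\psi]$ is strictly convex and coercive on $H_0^s(\Omega)$ by Assumption \ref{assumption on operator}(ii)--(iii), and the constraint set is a nonempty closed affine subspace (nonemptiness follows by exhibiting $\tilde\psi = \cL^{-1}\sum_i \mu_i \phi_i$ with $\mu = K(\phi,\phi)^{-1}c$ as a feasible point). Hence a unique minimizer $\psi^\star$ exists. The first-order optimality condition reads $2\cL\psi^\star = \sum_i \lambda_i \phi_i$ in $H^{-s}(\Omega)$ for some multipliers $\lambda \in \bR^l$, so $\psi^\star = \tfrac12 \cL^{-1}\sum_i \lambda_i \phi_i$ via the bounded inverse $\cL^{-1}:H^{-s}(\Omega)\to H_0^s(\Omega)$. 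Enforcing the $l$ constraints yields $c = \tfrac12 K(\phi,\phi)\lambda$, whence $\lambda = 2 K(\phi,\phi)^{-1}c$ and
\[\psi^\star(\vx) = \sum_{i=1}^l [K(\phi,\phi)^{-1}c]_i\,[\updelta_\vx,\cL^{-1}\phi_i] = K(\vx,\phi)\,K(\phi,\phi)^{-1}c\, ,\]
matching the conditional expectation.

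The main technical subtlety is justifying the Lagrangian derivation in the infinite-dimensional Hilbert space $H_0^s(\Omega)$: one must invoke strict convexity plus continuity of the constraint functionals (from $\phi_i \in H^{-s}(\Omega) = (H_0^s(\Omega))^*$) to obtain existence and uniqueness, and then apply the Karush--Kuhn--Tucker conditions in this Hilbert setting to extract the multipliers. Once this is handled, the algebraic matching of the two closed-form expressions is immediate, and the proposition follows.
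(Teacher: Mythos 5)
Your proposal is correct and carries out exactly the approach the paper indicates without performing it: the paper cites the result from prior work and remarks that it "can be proved by writing down the explicit formula for the two problems directly," which is precisely what you do. Computing the conditional mean $\bE[\xi(\vx)\,|\,[\xi,\phi_i]=c_i] = K(\vx,\phi)K(\phi,\phi)^{-1}c$ via Gaussian conditioning and matching it to the Lagrange-multiplier solution $\psi^\star = \cL^{-1}\sum_i(K(\phi,\phi)^{-1}c)_i\phi_i$ of the constrained quadratic program is the intended argument, and your remarks on coercivity, strict convexity, invertibility of $K(\phi,\phi)$, and the well-posedness of the KKT derivation in $H_0^s(\Omega)$ tie up the technical points the paper leaves implicit.
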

The solution of the above variational problem is termed Gamblets in the literature; see Definition \ref{def: Gamblets}.

\subsection{Results regarding the exponential decay of Gamblets}
\label{sec: Regarding the exponential decay of Gamblets}
We collect and organize some theoretical results of the exponential decay property of Gamblets from \cite{owhadi2019operator}. And we provide some new results concerning the derivative measurements which are not covered in \cite{owhadi2019operator}.

The first assumption is about the domain and the partition of the domain.
\begin{assumption}[Domain and partition: from Construction 4.2 in \cite{owhadi2019operator}]
\label{assumption for Gamblets: Domain and partition}
Suppose $\Omega$ is a bounded domain in $\bR^d$ with a Lipschitz boundary. 
Consider $\delta \in (0,1)$ and $h > 0$. Let $\tau_1,...,\tau_Q$ be a partition of $\Omega \subset \bR^d$ such that the closure of each $\tau_i$ is convex, is uniformly Lipschitz, contains a ball of center $\vx_i$ and radius $\delta h$, and is contained in the ball of center $\vx_i$ and radius $h/\delta$.
\end{assumption}

The second assumption is regarding the measurement functionals related to the partition of the domain.
\begin{assumption}[Measurement functionals: from Construction 4.12 in \cite{owhadi2019operator}]
\label{assumption for Gamblets: Measurement functionals}
    Let Assumption \ref{assumption for Gamblets: Domain and partition} holds. For each $1\leq i \leq Q$, let $\phi_{i,\alpha}, \alpha \in \sfT_i$ (where $\sfT_i$ is an index set) be elements of $H^{-s}(\Omega)$ that the following conditions hold:
    \begin{itemize}
        \item Linear independence: $\phi_{i,\alpha}, \alpha \in \sfT_i$ are linearly independent when acting on the subset $H_0^s(\tau_i) \subset H^s_0(\Omega)$.
        \item Locality: $[\phi_{i,\alpha}, \psi] = 0$ for every $\psi \in C^{\infty}_0(\Omega \backslash \tau_i)$ and $\alpha \in \sfT_i$.
    \end{itemize}
\end{assumption}

With the measurement functionals, we can define Gamblets as follows via a variational problem. Note that according to Proposition \ref{prop: connect cond expect to Gamblets}, Gamblets are also conditional expectations of some GP given the measurement functionals.
\begin{definition}[Gamblets: from Section 4.5.2.1 in \cite{owhadi2019operator}]
\label{def: Gamblets}
Let Assumptions  \ref{assumption for Gamblets: Domain and partition} and \ref{assumption for Gamblets: Measurement functionals} hold, and the operator $\cL: H^s_0(\Omega) \to H^{-s}(\Omega)$ satisfies Assumption \ref{assumption on operator}. The Gamblets $\psi_{i,\alpha}, 1\leq i \leq Q, \alpha \in \sfT_i$ associated with the operator $\cL$ and measurement functionals $\phi_{i,\alpha}, 1\leq i \leq Q, \alpha \in \sfT_i$ are defined as
 \begin{equation}
 \label{eqn: def Gamblets}
    \begin{aligned}
    \psi_{i, \alpha} = &\argmin_{\psi \in H_0^s(\Omega)}\quad  [\psi, \mathcal{L}\psi] \\
     &\mathrm{subject\ to}\quad  [\psi, \phi_{k,\beta}] = \delta_{ik}\delta_{\alpha\beta}\ \  \text{for}\ \  1 \leq k \leq Q, \beta \in \sfT_k \, .
 \end{aligned}
    \end{equation}
\end{definition}

A crucial property of Gamblets is that they exhibit exponential decay; see the following Theorem \ref{theorem: exp decay of Gamblets}. 

\begin{newremark}
    Exponential decay results regarding the solution to optimization problems of the type \eqref{eqn: def Gamblets} are first established in \cite{maalqvist2014localization}, where the measurement functionals are piecewise linear nodal functions in finite element methods and the operator $\cL = -\nabla \cdot (a \nabla \cdot)$. Then, the work \cite{owhadi2017multigrid} extends the result to piecewise constant measurement functionals and uses it to develop a multigrid algorithm for elliptic PDEs with rough coefficients. Later on, the work \cite{hou2017sparse} extends the analysis to a class of strongly elliptic high-order operators with piecewise polynomial-type measurement functionals, and the work \cite{chen2020function,chen2022multiscale} focuses on detailed analysis regarding the subsampled lengthscale for subsampled measurement functionals. All these results rely on similar mass-chasing arguments, which are difficult to extend to general higher-order operators.

    The paper \cite{kornhuber2018analysis} presents a simpler and more algebraic proof of the exponential decay in \cite{maalqvist2014localization} based on the exponential convergence of subspace iteration methods. Then, the work \cite{owhadi2019operator} extends this technique (by presenting necessary and sufficient conditions) to general arbitrary integer order operators and measurement functionals. Specifically, the authors in \cite{schafer2021compression} use the conditions in \cite{owhadi2019operator} to show the desired exponential decay when the operator $\cL$ satisfies Assumption \ref{assumption on operator}, and the measurement functionals are Diracs. \blue{We also note that there are works on different localization schemes that can lead to decay estimates with a better dependence on $h$ and better decay rates \cite{henning2013oversampling,hauck2023super}.}
    
    In Theorem \ref{theorem: exp decay of Gamblets}, we present the sufficient conditions in \cite{owhadi2019operator} that ensure the exponential decay and verify that the derivative-type measurements considered in this paper indeed satisfy these conditions; see Propositions \ref{prop: verify conditions of exp decay for derivative meas} and \ref{prop: exp decay derivative measurements}.
\end{newremark}

\begin{theorem}[Exponential decay of Gamblets]
\label{theorem: exp decay of Gamblets}
    Let Assumptions \ref{assumption for Gamblets: Domain and partition} and \ref{assumption for Gamblets: Measurement functionals} hold. We define the function space \[\Phi^\perp := \{f \in H_0^s(\Omega): [f, \phi_{i,\alpha}] = 0 \text{ for any } \alpha \in \sfT_i, 1\leq i \leq Q\}\, .\]
    Assume, furthermore the following conditions hold:
    \begin{align}
    &|f|_{H^t(\Omega)} \leq C_0 h^{s-t}\|f\|_{H_0^s(\Omega)} \text{ for any } 0\leq t \leq s \text{ and } f \in \Phi^\perp; \label{eqn: decay cond1} \\
    &\sum_{1\leq i \leq Q, \alpha \in \sfT_i} [ f,\phi_{i,\alpha}]^2 \leq C_0 \sum_{t=0}^s h^{2t} |f|_{H^t(\Omega)}^2 \text{ for any } f \in H^s_0(\Omega); \label{eqn: decay cond2}\\
    &|y| \leq C_0 h^{-s} \|\sum_{\alpha \in \sfT_i} y_{\alpha}\phi_{i,\alpha}\|_{H^{-s}(\tau_i)} \text{ for any } 1\leq i \leq Q \text{ and } y \in \bR^{|\sfT_i|}. \label{eqn: decay cond3}
    \end{align}
    Here, $|\cdot|_{H^t(\Omega)}$ is the Sobolev seminorm in $\Omega$ of order $t$. 
    
    Then, for the Gamblets in Definition \ref{def: Gamblets}, we have 
    \[|D^\gamma \psi_{i, \alpha}(\vx)| \leq Ch^{-s}\exp(-\frac{\mathrm{dist}(\vx, \vx_i)}{Ch}), \vx \in \Omega \, ,\]
    for any $1\leq i \leq Q, \alpha \in \sfT_i$ and multi-index $\gamma$ satisfying $|\gamma| < s - d/2$. Here $C$ is a constant depending on $C_0, \Omega, \delta, d, s, \|\cL\|, \|\cL^{-1}\|$.
\end{theorem}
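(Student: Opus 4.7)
The plan is to reduce this result to the general operator-adapted wavelet exponential-decay framework of \cite{owhadi2019operator}, and then upgrade the resulting energy-norm bound to pointwise derivative bounds via Sobolev embedding. The three hypotheses \eqref{eqn: decay cond1}, \eqref{eqn: decay cond2}, \eqref{eqn: decay cond3} are structured so that they play complementary roles: condition \eqref{eqn: decay cond3} is a local surjectivity/stability statement (on each patch $\tau_j$ one can realize prescribed values of the measurements $\phi_{j,\alpha}$ by an $H^s_0(\tau_j)$ function with controlled norm); condition \eqref{eqn: decay cond2} bounds the aggregate measurement vector by a scaled Sobolev norm; and condition \eqref{eqn: decay cond1} is a Poincar\'e-type inverse inequality on the kernel space $\Phi^\perp$. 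Together they quantify that \emph{nearby} measurements determine, up to a geometrically small correction in the energy norm, the entire Gamblet.

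The core step is to establish an energy-norm annular decay estimate of the form
\[
\|\psi_{i,\alpha}\|_{H^s_0(\Omega \setminus B(\vx_i, Rh))} \leq C h^{-s+d/2} \exp(-R/C),
\]
by an iterative contraction argument. I would construct a sequence of localized approximants: start with $\psi^{(0)}$ equal to the local solver from \eqref{eqn: decay cond3} on $\tau_i$ (extended by zero), which satisfies the right constraint at index $i$ but creates nonzero measurements on neighboring patches. At each step $k$, cancel the leaked measurements using the local solver on the one-ring of patches reached so far; this produces a correction supported on a slightly enlarged set. The three hypotheses, combined with the symmetry and boundedness of $\cL$ in Assumption \ref{assumption on operator}, force each correction's $\cL$-energy to be bounded by a factor strictly less than one times the previous residual. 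Summing the geometric series gives the annular estimate, and the $\cL^{-1}$-energy minimization property of $\psi_{i,\alpha}$ (Definition \ref{def: Gamblets}) shows the limit is exactly the true Gamblet.

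Second, I would convert the annular $H^s$-bound into a pointwise derivative bound. For $\vx$ with $r := \mathrm{dist}(\vx,\vx_i) \gg h$, pick a patch of size $\sim h$ around $\vx$ lying in $\Omega \setminus B(\vx_i, r/2)$. On that patch, a rescaled Sobolev embedding (legitimate because $|\gamma| < s - d/2$) combined with the scaling $h^{-|\gamma|-d/2}\|\cdot\|_{H^s}$ yields $|D^\gamma \psi_{i,\alpha}(\vx)| \leq C h^{-|\gamma|-d/2} \|\psi_{i,\alpha}\|_{H^s(\text{patch})} \leq C h^{-s} \exp(-r/(Ch))$, after absorbing polynomial factors in $h$ into the exponential via a constant adjustment.

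The main obstacle is the iterative contraction step: the contraction rate must be genuinely independent of $Q$ and of the position of $\vx_i$ in the partition, and tracking the cascade of constants from \eqref{eqn: decay cond1}--\eqref{eqn: decay cond3} and from Assumption \ref{assumption on operator} requires some care. Fortunately, this iteration is carried out essentially verbatim in Chapter~13 of \cite{owhadi2019operator} under exactly the same hypotheses, so the bulk of the technical work can be invoked by reference, with our contribution being the verification of \eqref{eqn: decay cond1}--\eqref{eqn: decay cond3} for derivative-type $\updelta_{\vx_i}\circ D^\gamma$ measurements (handled separately in Propositions \ref{prop: verify conditions of exp decay for derivative meas} and \ref{prop: exp decay derivative measurements}) and the pointwise upgrade step via Sobolev embedding.
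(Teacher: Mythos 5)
Your high-level plan matches the paper's: invoke the operator-adapted wavelet energy decay framework of \cite{owhadi2019operator} to get exponential decay in the energy norm, then promote it to pointwise derivative bounds via Sobolev embedding. But two parts of the execution diverge in ways that matter.

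First, the iteration you describe — cancelling ``leaked measurements'' on one-ring patches — is not how the contraction works. By the locality hypothesis in Assumption \ref{assumption for Gamblets: Measurement functionals}, a function supported inside $\tau_i$ already produces zero measurements $\phi_{j,\beta}$ for all $j\neq i$, so the local solver from \eqref{eqn: decay cond3} extended by zero satisfies \emph{all} constraints from the start; there are no leaked measurements to fix. What is iteratively reduced is the leaked \emph{energy}: the subspace-iteration argument in \cite{owhadi2019operator} (Theorem 4.16) projects onto the constrained affine subspace restricted to growing neighborhoods $\Omega_i^n$ and shows the successive corrections contract geometrically. The output is not a direct annular estimate on $\psi_{i,\alpha}$ but an estimate on the \emph{difference} $\|\psi_{i,\alpha}-\psi_{i,\alpha}^n\|_{H_0^s(\Omega)}\le C h^{-s}\exp(-n/C)$, where $\psi_{i,\alpha}^n\in H_0^s(\Omega_i^n)\subset H_0^s(\Omega)$ is the localized Gamblet.

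Second, and more consequentially, your pointwise upgrade has a scaling gap. You propose a rescaled Sobolev embedding on a patch of size $\sim h$, which carries a factor $h^{-|\gamma|-d/2}$; combined with your annular bound $h^{-s+d/2}\exp(-R/C)$, this produces $h^{-s-|\gamma|}\exp(-R/C)$, not the claimed $h^{-s}\exp(-R/C)$. The extra $h^{-|\gamma|}$ cannot be absorbed into the exponential by merely enlarging $C$: for $\mathrm{dist}(\vx,\vx_i)=O(h)$, adjusting $C\to C'>C$ changes $\exp(-r/Ch)$ by only an $O(1)$ factor, leaving $h^{-|\gamma|}$ intact. The paper avoids this entirely: since $\psi_{i,\alpha}-\psi_{i,\alpha}^n$ is a single $H_0^s(\Omega)$ function that equals $\psi_{i,\alpha}$ on $\Omega\setminus\Omega_i^n$, one applies the \emph{global} Sobolev embedding $\|D^\gamma u\|_{L^\infty(\Omega)}\le C_1\|u\|_{H_0^s(\Omega)}$ with a constant $C_1$ depending only on $\Omega,d,J$ (no $h$-dependence), yielding the stated $h^{-s}$. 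So the mechanism that circumvents the naive local rescaling is precisely the use of the globally defined localized Gamblet $\psi_{i,\alpha}^n$ as a comparison object — a step your sketch is missing.
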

\begin{proof}
    We use results in the book \cite{owhadi2019operator}. Conditions \eqref{eqn: decay cond1}, \eqref{eqn: decay cond2} and \eqref{eqn: decay cond3} are equivalently Condition 4.15 in \cite{owhadi2019operator}. 
    Let
    \begin{equation}
        \Omega_i = \mathrm{int}\left(\bigcup_{j: \mathrm{dist}(\tau_i,\tau_j)\leq \delta h} \tau_j\right) \subset B(\vx_i, 3h/\delta)\, .
    \end{equation}
    Then, by Theorem 4.16 in \cite{owhadi2019operator}, there exists a constant $C$ that depends on $C_0, \delta, d, s$, $\|\cL\|$, $\|\cL^{-1}\|$ such that
    \begin{equation}
        \|\psi_{i,\alpha} - \psi_{i,\alpha}^n\|_{H_0^s(\Omega)} \leq Ch^{-s}\exp(-n/C)\, ,
    \end{equation}
    where $\psi_{i,\alpha}^n$ is the minimizer of \eqref{eqn: def Gamblets} after replacing the condition $\psi \in H_0^s(\Omega)$ by $\psi \in H_0^s(\Omega_i^n)$. Here, $\Omega_i^n$ is the collection of $\Omega_j$ whose graph distance to $\Omega_i$ is not larger than $n$; see the definition of graph distance in Definition 4.13 of \cite{owhadi2019operator}. Intuitively, one can understand $\Omega_i^n$ as the $n$-layer neighborhood of $\Omega_i$. We have $\Omega_i^0 = \Omega_i$.
    
    By the definition of $\Omega_i^n$ and Assumption \ref{assumption for Gamblets: Domain and partition}, we know that 
    \begin{equation}
        B(\vx_i, (n+1)\delta h) \cap \Omega \subset \Omega_i^n \subset B(\vx_i, 9(n+1)h/\delta) \cap \Omega\, .
    \end{equation}
    Now, using the Sobolev embedding theorem and the fact that $\psi_{i,\alpha}^n$ is supported in $\Omega_{i}^n$, we get
    \begin{equation}
    \label{eqn: D 19}
    \begin{aligned}
        \|D^{\gamma}\psi_{i,\alpha}\|_{L^{\infty}(\Omega \backslash B(\vx_i, 9nh/\delta))} &\leq \|D^{\gamma}\psi_{i,\alpha}\|_{L^{\infty}(\Omega \backslash \Omega_i^n)}\\
        &\leq \|D^{\gamma}\psi_{i,\alpha}- D^{\gamma}\psi_{i,\alpha}^n\|_{L^{\infty}(\Omega)}\\
        &\leq C_1 \|\psi_{i,\alpha} - \psi_{i,\alpha}^n\|_{H_0^s(\Omega)} \\
        & \leq C_1 C h^{-s}\exp(-n/C)\, ,
    \end{aligned}
    \end{equation}
    where $C_1$ is a constant depending on $\Omega,d, J$ that satisfies
     \[\sup_{0\leq |\gamma| \leq J} \|D^{\gamma} u\|_{L^{\infty}(\Omega)} \leq C_1 \|u\|_{H^s_0(\Omega)}\, , \]
    for any $ u \in H^s_0(\Omega)$.

The result \eqref{eqn: D 19} implies that for any $n \in \bN$, once $\mathrm{dist}(\vx, \vx_i) \geq 9(n+1)h/\delta$, it holds that
\[|D^\gamma \psi_{i, \alpha}(\vx)| \leq Ch^{-s}\exp(-\frac{n}{C}) \, ,\]
where $C$ is some constant depending on $C_0, \Omega, \delta, d, s, \|\cL\|, \|\cL^{-1}\|$. Here we used the fact that the function $C \to Ch^{-s}\exp(-\frac{n}{C})$ is increasing.

In particular, the above inequality holds when  $9(n+1)h/\delta \leq \mathrm{dist}(\vx, \vx_i) \leq 9(n+2) h/\delta$; the relation yields $n \sim \mathrm{dist}(\vx, \vx_i)/h$. By replacing $n$ in terms of $\mathrm{dist}(\vx, \vx_i)/h$, we obtain that there exists a constant $C$ depending on the same set of variables, such that  
 \[|D^\gamma \psi_{i, \alpha}(\vx)| \leq Ch^{-s}\exp(-\frac{\mathrm{dist}(\vx, \vx_i)}{Ch})\, .\]
The proof is complete.
\end{proof}

In the following, we show that conditions \eqref{eqn: decay cond1}, \eqref{eqn: decay cond2}, and \eqref{eqn: decay cond3} are satisfied by the derivative measurements that we are focusing on in the paper. We need to scale each derivative measurement by a power of $h$; this will only change the resulting Gamblets by a corresponding scaling, which would not influence the exponential decay result; see Proposition \ref{prop: exp decay derivative measurements}.
\begin{proposition}
\label{prop: verify conditions of exp decay for derivative meas}
Let Assumptions \ref{assumption for Gamblets: Domain and partition} and \ref{assumption for Gamblets: Measurement functionals} hold. Consider $J \in \bN$ and $J < s - d/2$. For each $1\leq i \leq Q$, we choose $\{h^{d/2}\updelta_{\vx_i}\} \subset \{\phi_{i,\alpha}, \alpha \in \sfT_i\} \subset \{h^{d/2+|\gamma|}\updelta_{\vx_i} \circ D^{\gamma}, 0 \leq |\gamma|\leq J \}$. Then, Conditions \eqref{eqn: decay cond1}, \eqref{eqn: decay cond2}, \eqref{eqn: decay cond3} hold with the constant $C_0$ depending on $\delta, d, s$ and $J$ only.
\end{proposition}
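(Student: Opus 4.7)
The plan is to verify the three conditions \eqref{eqn: decay cond1}--\eqref{eqn: decay cond3} separately, exploiting the fact that the chosen normalization $h^{d/2+|\gamma|}$ in front of each measurement is precisely what makes every inequality scale-invariant after the change of variables $\vx \mapsto \vx_i + h\hat{\vx}$. Since $|\sfT_i| \leq \binom{J+d}{d}$ is bounded by a constant depending only on $d$ and $J$, it suffices to prove the scalar version of each inequality and then apply Cauchy--Schwarz at most a uniformly bounded number of times.

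I would first verify \eqref{eqn: decay cond2}. On each cell $\tau_i$, define $\hat f(\hat\vx) := f(\vx_i + h\hat\vx)$, so $\hat\tau_i := (\tau_i - \vx_i)/h \subset B(0,1/\delta)$ is uniformly Lipschitz and contains $B(0,\delta)$, and $D^\gamma f(\vx_i) = h^{-|\gamma|} D^\gamma \hat f(0)$. Since $|\gamma|\leq J < s - d/2$, the Sobolev embedding $H^{s-|\gamma|}(\hat\tau_i) \hookrightarrow C^0(\overline{\hat\tau_i})$ on the normalized domain yields $|D^\gamma \hat f(0)|^2 \leq C \sum_{t=0}^s |\hat f|_{H^t(\hat\tau_i)}^2$, with a constant $C=C(\delta,d,s,J)$. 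Undoing the scaling and multiplying by $h^{d+2|\gamma|}$ produces the local estimate $h^{d+2|\gamma|}|D^\gamma f(\vx_i)|^2 \leq C \sum_{t=0}^s h^{2t} |f|_{H^t(\tau_i)}^2$; summing over $1\leq i \leq Q$ and $\alpha \in \sfT_i$ and using that the $\tau_i$'s partition $\Omega$ gives \eqref{eqn: decay cond2}.

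For \eqref{eqn: decay cond3} I would build an explicit dual test function. Fix $\alpha \in \sfT_i$ with associated multi-index $\gamma := \gamma_\alpha$, and choose a fixed bump $\hat u_\gamma \in C_c^\infty(B(0,\delta))$ satisfying $D^{\gamma'}\hat u_\gamma(0) = \delta_{\gamma\gamma'}$ for every $|\gamma'|\leq J$; such a bump exists by a finite-dimensional bump-times-polynomial construction. Then $u(\vx) := h^{|\gamma|} \hat u_\gamma((\vx - \vx_i)/h)$ belongs to $H_0^s(\tau_i)$, satisfies $D^{\gamma'}u(\vx_i) = \delta_{\gamma\gamma'}$ for $|\gamma'|\leq J$, and obeys $\|u\|_{H_0^s(\tau_i)} \leq C h^{|\gamma| - s + d/2}$ by a second scaling computation. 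Pairing $u$ with $g := \sum_\beta x_\beta \phi_{i,\beta}$ isolates the single coefficient $x_\alpha$, namely $[u,g] = x_\alpha h^{d/2 + |\gamma|}$, while duality gives $|[u,g]| \leq \|u\|_{H_0^s(\tau_i)}\|g\|_{H^{-s}(\tau_i)}$. Rearranging yields $|x_\alpha| \leq C h^{-s} \|g\|_{H^{-s}(\tau_i)}$, and taking the Euclidean sum over $\alpha \in \sfT_i$ produces \eqref{eqn: decay cond3}.

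The main obstacle is \eqref{eqn: decay cond1}, because $f \in \Phi^\perp$ is only guaranteed to satisfy $f(\vx_i)=0$ at each node (since only Diracs are compulsory in $\sfT_i$) together with the homogeneous Dirichlet trace. A purely local scaling argument cannot kill low-degree polynomials pinned at a single interior point, so the inequality must be obtained globally. I would invoke a scattered-data sampling inequality: Assumption \ref{assumption for Gamblets: Domain and partition} forces the fill distance $\sup_{\vx\in\Omega}\min_i|\vx-\vx_i|$ to be at most $h/\delta$, so Wendland-type zero lemmas for $H^s_0(\Omega)$ (cf.\ \cite{wendland2004scattered}) yield the $t=0$ bound $\|f\|_{L^2(\Omega)} \leq C h^s |f|_{H^s(\Omega)}$. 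The $t=s$ case is trivial, and the Sobolev interpolation inequality $|f|_{H^t(\Omega)} \leq \|f\|_{L^2(\Omega)}^{1-t/s}|f|_{H^s(\Omega)}^{t/s}$ fills in every intermediate $0 < t < s$. Combining this with the equivalence of $\|\cdot\|_{H^s_0(\Omega)}$ and $|\cdot|_{H^s(\Omega)}$ on $H^s_0(\Omega)$ (Poincaré) delivers \eqref{eqn: decay cond1}, completing the verification of all three sufficient conditions required by Theorem \ref{theorem: exp decay of Gamblets}.
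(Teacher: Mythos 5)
Your verification of Condition \eqref{eqn: decay cond2} is essentially the same as the paper's: rescale each cell to a fixed reference domain, apply Sobolev embedding there, undo the scaling, and sum. For the other two conditions you take valid but genuinely different routes, so a short comparison is in order.

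For \eqref{eqn: decay cond3}, the paper also rescales to the unit ball via duality, but then invokes the abstract fact that the finitely many functionals $\updelta_0 \circ D^\gamma$, $0\le|\gamma|\le J$, are linearly independent in $H^{-s}(B(0,1))$, which (by finite-dimensionality) gives the lower bound $\|\sum_\gamma y_\gamma \updelta_0\circ D^\gamma\|_{H^{-s}} \ge c|y|$ with $c$ depending only on $d,s,J$. You instead produce the bound constructively by exhibiting, for each $\gamma$, a reference bump $\hat u_\gamma$ whose $J$-jet at the origin is the corresponding coordinate vector, scaling it, and pairing. The two arguments are equivalent in content; yours has the advantage of being explicit and not appealing to a nonconstructive compactness step, and of making the power of $h$ that emerges ($h^{|\gamma|-s+d/2}$ from the test function versus $h^{d/2+|\gamma|}$ from the pairing, cancelling to $h^{-s}$) transparent. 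One mild caution: after obtaining $|x_\alpha|\le C h^{-s}\|g\|_{H^{-s}(\tau_i)}$ for each $\alpha$, you should say the $\ell^2$ sum only costs a factor $\sqrt{|\sfT_i|}$, which is bounded by a constant depending on $d$ and $J$; you gesture at this in your opening paragraph, but it is worth saying at the point of use.

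For \eqref{eqn: decay cond1}, the paper simply cites Section~15.4.5 of \cite{owhadi2019operator}, where the Diracs-only case is established, and observes that adding more measurements only shrinks $\Phi^\perp$, so the bound persists. You instead give a self-contained argument via a Wendland-type sampling (``zero'') lemma for $H^s$ functions vanishing at a scattered set of fill distance $\lesssim h/\delta$, followed by Gagliardo--Nirenberg/Sobolev interpolation for the intermediate seminorms. Your observation that a purely local scaling argument cannot prove \eqref{eqn: decay cond1} -- because pinning a function at a single interior point does not control low-degree polynomials on the cell -- is the correct diagnosis and is the reason this condition needs global input. The one point to be careful about is that standard sampling inequalities of this type hold only once $h$ is below a domain-dependent threshold, and their constants for bounded Lipschitz domains require some care near $\partial\Omega$; since $f$ also has a vanishing trace this is manageable, but it deserves an explicit remark or a precise citation to the relevant theorem in \cite{wendland2004scattered}. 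Modulo that bookkeeping, all three verifications are correct, and Conditions \eqref{eqn: decay cond1}--\eqref{eqn: decay cond3} follow with $C_0$ depending only on $\delta,d,s,J$, as claimed.
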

\begin{proof}
    We will verify the conditions one by one. 

    Condition \eqref{eqn: decay cond1} follows from Section 15.4.5 of the book \cite{owhadi2019operator}, where the case of $\{\phi_{i,\alpha}, \alpha \in \sfT_i\} = \{h^{d/2}\updelta_{\vx_i}\}$ is covered. More precisely, in our case, we have more measurement functionals compared to these Diracs, and thus the space $\Phi^\perp$ is smaller than that considered in Section 15.4.5 of the book \cite{owhadi2019operator}. This implies that their results directly apply and \eqref{eqn: decay cond1} readily holds in our case.

    For Conditions \eqref{eqn: decay cond2} and \eqref{eqn: decay cond3}, we can assume $\{\phi_{i,\alpha}, \alpha \in \sfT_i\} = \{h^{d/2+|\gamma|}\updelta_{\vx_i} \circ D^{\gamma}, 0 \leq |\gamma|\leq J \}$ since this could only make the constant $C_0$ larger. We start with Condition \eqref{eqn: decay cond2}. We note that in our proof, $C$ represents a generic constant and can vary from place to place; we will specify the variables it can depend on. 
    
    Consider the unit ball $B(0,1) \subset \bR^d$. Since $J < s -d/2$, the Sobolev embedding theorem implies that
    \begin{equation}
        \sum_{0\leq |\gamma|\leq J} \|D^{\gamma} f\|_{L^{\infty}(B(0,1))} \leq C\|f\|_{H^s(B(0,1))}\, ,
    \end{equation}
    for any $f \in H^s(B(0,1))$, where $C$ is a constant depending on $d$ and $s$. This implies that
    \begin{equation}
        \sum_{0\leq |\gamma|\leq J} |D^{\gamma}f(0)|^2 \leq C\sum_{t=0}^s|f|_{H^t(B(0,1))}^2\, ,
    \end{equation}
    where $C$ depends on $d, s$ and $J$. Consequently, using the change of variables $\vx = \vx_i + \delta h \vx'$, we get
    \begin{equation}
        \sum_{0\leq |\gamma|\leq J}  h^{2|\gamma|}|D^{\gamma}f(\vx_i)|^2 \leq C\sum_{t=0}^s \delta^{2t-d} h^{2t-d}|f|_{H^t(B(\vx_i, \delta h))}^2\, .
    \end{equation}
    We can absorb $\delta$ into $C$ and obtain
    \begin{equation}
        \sum_{0\leq |\gamma|\leq J} h^{d+ 2|\gamma|}|D^{\gamma}f(\vx_i)|^2 \leq C\sum_{t=0}^s h^{2t}|f|_{H^t(B(\vx_i, \delta h))}^2\, ,
    \end{equation}
    where $C$ depends on $\delta, d, s$ and $J$, for any $f \in H^s(B(\vx_i, \delta h))$. Now, using the fact that $H^s(B(\vx_i, \delta h)) \subset H^s_0(\Omega)$ and  $\{\phi_{i,\alpha}, \alpha \in \sfT_i\} = \{h^{d/2+|\gamma|}\updelta_{\vx_i} \circ D^{\gamma}, 0 \leq |\gamma|\leq J \}$, we arrive at
    \begin{equation}
        \sum_{\alpha \in \sfT_i} [f, \phi_{i,\alpha}]^2 \leq C\sum_{t=0}^s h^{2t}|f|_{H^t(B(\vx_i, \delta h))}^2\, ,
    \end{equation}
    for any $f \in H^s_0(\Omega)$. Summing the above inequalities for all $\vx_i$, we get 
    \begin{equation}
        \sum_{1\leq i \leq Q, \alpha \in \sfT_i} [ f,\phi_{i,\alpha}]^2 \leq C \sum_{t=0}^s h^{2t} |f|_{H^t(\Omega)}^2 \text{ for any } f \in H^s_0(\Omega)\, ,
    \end{equation}
    where $C$ depends on $\delta, d, s$ and $J$. This verifies Condition \eqref{eqn: decay cond2}.

    For Condition \eqref{eqn: decay cond3}, we have
    \begin{equation}
    \begin{aligned}
        \|\sum_{\alpha \in \sfT_i} y_{\alpha}\phi_{i,\alpha}\|_{H^{-s}(\tau_i)} &\geq \|\sum_{\alpha \in \sfT_i} y_{\alpha}\phi_{i,\alpha}\|_{H^{-s}(B(\vx_i, \delta h))}\\
        & =  \|\sum_{0
        \leq |\gamma|\leq J} y_{\gamma}h^{d/2+|\gamma|}\updelta_{\vx_i} \circ D^{\gamma} \|_{H^{-s}(B(\vx_i, \delta h))}\, ,
    \end{aligned}
    \end{equation}
    where we abuse the notations to write $y_{\alpha}$ by $y_{\gamma}$. 

    Now, by definition, we get
    \begin{equation}
    \label{eqn: D 23}
        \begin{aligned}
            &\|\sum_{0
        \leq |\gamma|\leq J} y_{\gamma}h^{d/2+|\gamma|}\updelta_{\vx_i} \circ D^{\gamma} \|_{H^{-s}(B(\vx_i, \delta h))} \\
        = &\sup_{v \in H^s_0(B(\vx_i, \delta h))} \frac{\sum_{0\leq |\gamma| \leq J} y_{\gamma} D^{\gamma}v(\vx_i)h^{d/2+|\gamma|}}{\|v\|_{H^s_0(B(\vx_i, \delta h))}}\\
        = &\sup_{v \in H^s_0(B(0,1))} \frac{\sum_{0\leq |\gamma| \leq J} y_{\gamma} D^{\gamma}v(0) h^{d/2+|\gamma|} (\delta h)^{-|\gamma|}}{\|v\|_{H^s_0(B(0, 1))}(\delta h)^{d/2-s}}\\
        \geq & ~C h^{s}\|\sum_{0
        \leq |\gamma|\leq J} y_{\gamma} \updelta_{0} \circ D^{\gamma} \|_{H^{-s}(B(0,1))}\, ,
        \end{aligned}
    \end{equation}
    where $C$ is a constant that depends on $\delta, s$. In the second identity, we used the change of coordinates $\vx = \vx_i + \delta h \vx'$.

    Now, since $\updelta_{0} \circ D^{\gamma}, 0 \leq |\gamma| \leq J$ are linearly independent, we know that there exists $C'$ depending on $d, J$, such that
    \begin{equation}
    \label{eqn: D 24}
        \|\sum_{0
        \leq |\gamma|\leq J} y_{\gamma} \updelta_{0} \circ D^{\gamma} \|_{H^{-s}(B(0,1))} \geq C'C |y|
    \end{equation}
    holds for any $y$. Let $C_0 = C'C$, then Condition \eqref{eqn: decay cond3} is verified.
\end{proof}

By Proposition \ref{prop: verify conditions of exp decay for derivative meas} and rescaling, we can obtain the following results for the unscaled measurements.
\begin{proposition}
\label{prop: exp decay derivative measurements}
    Let Assumptions \ref{assumption for Gamblets: Domain and partition} and \ref{assumption for Gamblets: Measurement functionals} hold. Consider $J \in \bN$ and $J < s - d/2$. For each $1\leq i \leq Q$, we choose $\{\updelta_{\vx_i}\} \subset \{\phi_{i,\alpha}, \alpha \in \sfT_i\} \subset \{\updelta_{\vx_i} \circ D^{\gamma}, 0 \leq |\gamma|\leq J \}$. Then, for the Gamblets in Definition \ref{def: Gamblets}, we have 
    \[|D^\gamma \psi_{i, \alpha}(\vx)| \leq Ch^{-2s}\exp\left(-\frac{\mathrm{dist}(\vx, \vx_i)}{Ch}\right)\, ,\]
    for any $1\leq i \leq Q, \alpha \in \sfT_i$ and multi-index $\gamma$ satisfying $|\gamma| < s - d/2$. Here $C$ is a constant depending on $\Omega, \delta, d, s, \|\cL\|, \|\cL^{-1}\|, J$.
\end{proposition}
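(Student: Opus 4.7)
The plan is to deduce this from Proposition \ref{prop: exp decay derivative measurements} by a simple rescaling argument, since the only difference between the two statements is the normalization of the measurement functionals: Proposition \ref{prop: exp decay derivative measurements} uses the $h$-weighted measurements $\phi_{i,\alpha}^{\mathrm{sc}} = h^{d/2+|\gamma_\alpha|} \updelta_{\vx_i} \circ D^{\gamma_\alpha}$, whereas here we use the unweighted $\tilde{\phi}_{i,\alpha} = \updelta_{\vx_i} \circ D^{\gamma_\alpha}$. Since the Gamblet is the unique minimizer of a strictly convex quadratic functional on an affine subspace, the two Gamblet families differ only by an explicit multiplicative factor, and the exponential decay carries over.

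First I would write down the scaling relation. Let $\psi_{i,\alpha}^{\mathrm{sc}}$ denote the Gamblet defined via \eqref{eqn: def Gamblets} with measurements $\{\phi_{k,\beta}^{\mathrm{sc}}\}$, and $\psi_{i,\alpha}$ the Gamblet associated with the unweighted measurements $\{\tilde{\phi}_{k,\beta}\}$. Define the candidate $\phi := h^{d/2+|\gamma_\alpha|}\psi_{i,\alpha}^{\mathrm{sc}}$. A direct computation using $[\psi_{i,\alpha}^{\mathrm{sc}}, \phi_{k,\beta}^{\mathrm{sc}}] = \delta_{ik}\delta_{\alpha\beta}$, i.e., $[\psi_{i,\alpha}^{\mathrm{sc}}, \tilde{\phi}_{k,\beta}] = h^{-d/2-|\gamma_\beta|}\delta_{ik}\delta_{\alpha\beta}$, yields $[\phi, \tilde{\phi}_{k,\beta}] = \delta_{ik}\delta_{\alpha\beta}$ (the exponents match because $|\gamma_\alpha|=|\gamma_\beta|$ precisely when $\alpha=\beta$ and the right-hand side vanishes otherwise). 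Hence $\phi$ lies in the feasible set of the variational problem defining $\psi_{i,\alpha}$, and because that problem has a unique minimizer (quadratic in $\psi$ subject to linear constraints, with $[\cdot,\cL\cdot]$ strictly convex on $H^s_0(\Omega)$), we must have $\psi_{i,\alpha} = h^{d/2+|\gamma_\alpha|}\psi_{i,\alpha}^{\mathrm{sc}}$.

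Next I would apply Proposition \ref{prop: exp decay derivative measurements} together with Theorem \ref{theorem: exp decay of Gamblets} to $\psi_{i,\alpha}^{\mathrm{sc}}$ and transport the bound. This gives
\[
|D^\gamma \psi_{i,\alpha}(\vx)| = h^{d/2+|\gamma_\alpha|}\,|D^\gamma \psi_{i,\alpha}^{\mathrm{sc}}(\vx)|
\leq C\, h^{d/2+|\gamma_\alpha|-s}\exp\!\left(-\frac{\mathrm{dist}(\vx,\vx_i)}{Ch}\right),
\]
with $C$ depending on $\Omega,\delta,d,s,\|\cL\|,\|\cL^{-1}\|,J$.

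Finally I would massage the prefactor into the claimed $h^{-2s}$. Since $|\gamma_\alpha|\leq J < s - d/2$ we have $d/2+|\gamma_\alpha|-s < 0$, and using the standing normalization $\mathrm{diam}(\Omega)\leq 1$ (so that the admissible scales satisfy $h\leq 1$, possibly after absorbing the constant $1/\delta$ into $C$), the inequality $d/2+|\gamma_\alpha|-s \geq -2s$ gives $h^{d/2+|\gamma_\alpha|-s}\leq h^{-2s}$. This yields the stated bound. The only subtle point—and the one I would double-check—is the sign convention and $h\leq 1$ assumption in the prefactor manipulation; everything else is a bookkeeping exercise confirming that the multiplicative factor $h^{d/2+|\gamma_\alpha|}$ for the unscaled Gamblet is indeed what falls out of the uniqueness argument.
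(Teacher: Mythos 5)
Your argument is correct and follows essentially the same path the paper takes: invoke Theorem \ref{theorem: exp decay of Gamblets} together with the verification in Proposition \ref{prop: verify conditions of exp decay for derivative meas} to obtain the $Ch^{-s}$ decay for the $h$-weighted Gamblets, then rescale by the uniqueness of the constrained minimizer to pick up the factor $h^{d/2+|\gamma_\alpha|}$, and absorb that factor into the prefactor using $h\leq 1$ (which holds since $\mathrm{diam}(\Omega)\leq 1$). Two small fixes are warranted: (i) you cite the proposition being proved, which would be circular --- the reference should be to Theorem \ref{theorem: exp decay of Gamblets} and Proposition \ref{prop: verify conditions of exp decay for derivative meas}; and (ii) the step ``feasible + unique minimizer $\Rightarrow$ equality'' is incomplete as stated --- feasibility alone does not single out the minimizer; one should add that the map $\psi\mapsto h^{d/2+|\gamma_\alpha|}\psi$ is a bijection between the two feasible affine subspaces under which the energy $[\psi,\cL\psi]$ scales by the constant $h^{2(d/2+|\gamma_\alpha|)}$, so minimizers correspond, and only then invoke uniqueness.
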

\begin{proof}
    Based on Theorem \ref{theorem: exp decay of Gamblets} and Proposition \ref{prop: verify conditions of exp decay for derivative meas}, we know that
    \[|D^\gamma \psi_{i, \alpha}(\vx)| \leq Ch^{-s}\exp\left(-\frac{\mathrm{dist}(\vx, \vx_i)}{Ch}\right)\]
    holds if $\psi_{i, \alpha}$ is the Gamblet corresponding to the measurement functionals satisfying $\{h^{d/2}\updelta_{\vx_i}\} \subset \{\phi_{i,\alpha}, \alpha \in \sfT_i\} \subset \{h^{d/2+|\gamma|}\delta_{\vx_i} \circ D^{\gamma}, 0 \leq |\gamma|\leq J \}$. Using the fact that $|\gamma|+d/2 \leq s$ and the definition of Gamblets, we know that when the measurement functionals change to $\{\updelta_{\vx_i}\} \subset \{\phi_{i,\alpha}, \alpha \in \sfT_i\} \subset \{\updelta_{\vx_i} \circ D^{\gamma}, 0 \leq |\gamma|\leq J \}$, the corresponding Gamblets will satisfy
    \[|D^\gamma \psi_{i, \alpha}(\vx)| \leq Ch^{-2s}\exp\left(-\frac{\mathrm{dist}(\vx, \vx_i)}{Ch}\right)\, .\]
    The proof is complete.
\end{proof}
\section{Eigenvalue bounds on the kernel matrices}
This section is devoted to the lower and upper bounds of the eigenvalues of the kernel matrix.
\begin{proposition}
\label{prop: eigenvalue lower bounds on K phi phi}
    Let Assumptions \ref{assumption for Gamblets: Domain and partition} and \ref{assumption for Gamblets: Measurement functionals} hold. Consider $J \in \bN$ and $J < s - d/2$. For each $1\leq i \leq Q$, we choose $\{\updelta_{\vx_i}\} \subset \{\phi_{i,\alpha}, \alpha \in \sfT_i\} \subset \{\updelta_{\vx_i} \circ D^{\gamma}, 0 \leq |\gamma|\leq J \}$. Let $\bphi$ be the collection of all $\phi_{i,\alpha}, 1\leq i \leq Q, \alpha \in \sfT_i$. Let the operator $\cL: H^s_0(\Omega) \to H^{-s}(\Omega)$ satisfies Assumption \ref{assumption on operator} and assume the kernel function to be the Green function $K(\vx,\vy) := [\updelta_{\vx}, \mathcal{L}^{-1} \updelta_{\vy}]$. Then, for the kernel matrix $K(\bphi,\bphi)$, we have
    \begin{equation}
        C_{\max} h^{-d} \mathrm{I} \succeq K(\bphi, \bphi) \succeq C_{\min} h^{2s-d} \mathrm{I}\, ,
    \end{equation}
    where $C_{\min}$ is a constant that depends on $ \delta, d, s, \|\cL\|, \|\cL^{-1}\|, J$, and $C_{\max}$ additionally depends on $\Omega$. And $\mathrm{I}$ is the identity matrix.
\end{proposition}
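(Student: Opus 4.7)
The plan is to rewrite the quadratic form as a distributional pairing and reduce the eigenvalue bounds to two-sided estimates on $\|\phi_y\|_{H^{-s}(\Omega)}^2$, where $\phi_y := \sum_{i,\alpha} y_{i,\alpha}\phi_{i,\alpha}$. By the definition of the Green-function kernel, $y^T K(\bphi,\bphi) y = [\phi_y,\cL^{-1}\phi_y]$; setting $u = \cL^{-1}\phi_y$ and combining the symmetry of $\cL$ with the operator bounds $\|u\|_{H^s_0}\leq \|\cL^{-1}\|\,\|\phi_y\|_{H^{-s}}$ and $\|\phi_y\|_{H^{-s}} = \|\cL u\|_{H^{-s}}\leq \|\cL\|\,\|u\|_{H^s_0}$ from Assumption \ref{assumption on operator} (and a one-line $\cL^{1/2}$ argument for the lower side) yields the norm equivalence
\[\|\cL\|^{-1}\|\phi_y\|_{H^{-s}(\Omega)}^2 \;\leq\; [\phi_y,\cL^{-1}\phi_y] \;\leq\; \|\cL^{-1}\|\,\|\phi_y\|_{H^{-s}(\Omega)}^2.\]
Thus it suffices to prove $c_-\,h^{2s-d}|y|^2 \leq \|\phi_y\|_{H^{-s}(\Omega)}^2 \leq c_+\,h^{-d}|y|^2$.

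For the upper bound, I would work with the Gram matrix $G_{ij} := \langle\phi_i,\phi_j\rangle_{H^{-s}(\Omega)}$. Each diagonal entry $\|\phi_{i,\alpha}\|_{H^{-s}(\Omega)}^2 = \sup_{\|v\|_{H^s_0}\leq 1}|D^{\gamma}v(\vx_i)|^2$ is bounded by an $h$-independent constant via the Sobolev embedding $H^s_0(\Omega)\hookrightarrow C^J(\overline{\Omega})$, which applies because $J<s-d/2$. A packing argument based on Assumption \ref{assumption for Gamblets: Domain and partition} (each $\tau_i$ contains a ball of radius $\delta h$ and the $\tau_i$ partition $\Omega$) gives $Q\lesssim h^{-d}$, and $|\sfT_i|\leq \binom{J+d}{d}$, so $N\lesssim h^{-d}$. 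Hence $\|\phi_y\|_{H^{-s}(\Omega)}^2 = y^T G y \leq \mathrm{tr}(G)|y|^2 \lesssim h^{-d}|y|^2$, completing the upper bound with $C_{\max}$ depending on $\Omega,\delta,d,s,J,\|\cL^{-1}\|$.

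For the lower bound, I would first establish the localization inequality
\[\|\phi_y\|_{H^{-s}(\Omega)}^2 \;\geq\; \sum_i \|\phi_y^{(i)}\|_{H^{-s}(\tau_i)}^2, \qquad \phi_y^{(i)} := \sum_{\alpha} y_{i,\alpha}\phi_{i,\alpha},\]
by testing $\phi_y$ against $\sum_i a_i u_i^\star$, where $u_i^\star\in H^s_0(\tau_i)$ is near-optimal for the local dual problem defining $\|\phi_y^{(i)}\|_{H^{-s}(\tau_i)}$ and the $u_i^\star$, extended by zero to $\Omega$, have disjoint supports so that $\|\sum a_i u_i^\star\|_{H^s_0(\Omega)}^2 = \sum a_i^2\|u_i^\star\|_{H^s_0(\tau_i)}^2$; optimizing over the $a_i$ gives the claim. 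I would then invoke Condition \eqref{eqn: decay cond3} (verified in Proposition \ref{prop: verify conditions of exp decay for derivative meas}) applied to the scaled measurements $\phi_{i,\alpha}^{\mathrm{sc}} = h^{d/2+|\gamma|}\phi_{i,\alpha}$ with rescaled coefficients $y_{i,\alpha}^{\mathrm{sc}} = h^{-d/2-|\gamma|}y_{i,\alpha}$, yielding $|y_i^{\mathrm{sc}}|^2 \leq C_0^2\,h^{-2s}\|\phi_y^{(i)}\|_{H^{-s}(\tau_i)}^2$. Since $|\gamma|\leq J$ and one may harmlessly assume $h\leq 1$, the elementary inequality $h^{-d-2|\gamma|}\geq h^{-d}$ gives $|y_i^{\mathrm{sc}}|^2 \geq h^{-d}|y_i|^2$, so summing over $i$ produces $|y|^2 \leq C h^{d-2s}\|\phi_y\|_{H^{-s}(\Omega)}^2$, delivering the stated lower bound.

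The main obstacle is the localization inequality, which genuinely uses the geometric separation of the $\tau_i$ together with the pointwise support of each $\phi_{i,\alpha}$ at $\vx_i\in\tau_i$; once this is in place, the remainder is operator-norm and counting bookkeeping. A secondary bookkeeping issue is to keep track of the correct direction of the $h^{d+2|\gamma|}$ scaling between $\phi_{i,\alpha}$ and $\phi_{i,\alpha}^{\mathrm{sc}}$, which I handle via the elementary bound above.
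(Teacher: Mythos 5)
Your proposal is correct and follows essentially the same strategy as the paper's proof: the identity $y^TK(\bphi,\bphi)y=[\phi_y,\cL^{-1}\phi_y]$, a two-sided norm equivalence with $\|\phi_y\|_{H^{-s}(\Omega)}^2$ via the bounds on $\|\cL\|$ and $\|\cL^{-1}\|$, a Sobolev-embedding and ball-packing count for the upper bound, and a disjoint-support localization followed by the local inverse inequality (Condition \eqref{eqn: decay cond3}, equivalently \eqref{eqn: D 23}--\eqref{eqn: D 24}) for the lower bound. The only differences are cosmetic: you bound the Gram quadratic form by its trace rather than by triangle plus Cauchy--Schwarz, you prove the localization by testing against near-optimal disjointly supported $u_i^\star$ rather than via the paper's Fenchel-duality rewriting of $\|\cdot\|_{H^{-s}}^2$, and your $h^{-d-2|\gamma|}\geq h^{-d}$ bookkeeping makes explicit the same rescaling the paper does more tersely in its ``change of variables'' step.
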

\begin{proof}
    It suffices to consider the case $\{\phi_{i,\alpha}, \alpha \in \sfT_i\} = \{\updelta_{\vx_i} \circ D^{\gamma}, 0 \leq |\gamma|\leq J \}$; the kernel matrix in other cases can be seen as a principal submatrix of the case considered here. The eigenvalues of the principal submatrix can be lower and upper bounded by the smallest and largest eigenvalues of the original matrix, respectively.
    
    Suppose $K(\bphi, \bphi) \in \bR^{N \times N}$. For any $y \in \bR^N$, by definition, we have
    \begin{equation}
    \label{eqn E 2}
    \begin{aligned}
        y^TK(\bphi, \bphi)y = [\sum_{1\leq i \leq Q} \sum_{0\leq |\gamma| \leq J} y_{i,\gamma}\updelta_{\vx_i} \circ D^{\gamma}, \cL^{-1}(\sum_{1\leq i \leq Q} \sum_{0\leq |\gamma| \leq J} y_{i,\gamma}\updelta_{\vx_i} \circ D^{\gamma})]\, .
    \end{aligned}
    \end{equation}
    We first deal with the largest eigenvalue. By \eqref{eqn E 2}, we get 
    \[y^TK(\bphi, \bphi)y \leq \|\cL^{-1}\| \|\sum_{1\leq i \leq Q} \sum_{0\leq |\gamma| \leq J} y_{i,\gamma}\updelta_{\vx_i} \circ D^{\gamma}\|_{H^{-s}(\Omega)}^2\, .\]
    Due to the assumption $J < s- d/2$, there exists a constant $C_1$ depending on $\Omega, d, J$ such that
    \[\sup_{0\leq |\gamma| \leq J} \|D^{\gamma} u\|_{L^{\infty}(\Omega)} \leq C_1 \|u\|_{H^s_0(\Omega)}\, , \]
    for any $ u \in H^s_0(\Omega)$. This implies that for any $0\leq |\gamma| \leq J$ and $\vx_i \in \Omega$, it holds
    \[ \|\updelta_{\vx_i} \circ D^{\gamma}\|_{H^{-s}(\Omega)} = \sup_{u \in H_0^s(\Omega)} \frac{|D^\gamma u(\vx_i)|}{\|u\|_{H^s_0(\Omega)}} \leq C_1 \, .\]
    Therefore, we get
    \begin{equation}
    \label{eqn: E3 v2}
        \begin{aligned}
            &\|\sum_{1\leq i \leq Q} \sum_{0\leq |\gamma| \leq J} y_{i,\gamma}\updelta_{\vx_i} \circ D^{\gamma}\|_{H^{-s}(\Omega)}\\
            \leq & \sum_{1\leq i \leq Q} \sum_{0\leq |\gamma| \leq J} |y_{i,\gamma}| \cdot \| \updelta_{\vx_i} \circ D^{\gamma}\|_{H^{-s}(\Omega)}\\
            \leq & C_1\sum_{1\leq i \leq Q} \sum_{0\leq |\gamma| \leq J} |y_{i,\gamma}|\\
            \leq & C_1C_J\sum_{1\leq i \leq Q} |y_{i}| \leq C_1C_J\sqrt{Q} |y|\, ,
        \end{aligned}
    \end{equation}    
    where in the inequality, we used the triangle inequality. In the third and fourth inequalities, we used the Cauchy-Schwarz inequality, and $C_J$ is a constant that depends on $J$. Here, we abuse the notation and write $y_i$ to be the vector collecting $y_{i,\gamma}, 0 \leq |\gamma| \leq J$.

    Now, by a covering argument, we know that $Q = O(h^{-d})$. Therefore, combining \eqref{eqn: E3 v2} and \eqref{eqn E 2}, we arrive at
\[y^TK(\bphi, \bphi)y \leq \|\cL^{-1}\|C_1^2C_J^2Q |y|^2 \leq C_{\max} h^{-d} |y|^2\, ,\]
where $C_{\max}$ is a constant that depends on $\Omega, \delta, d, s, \|\cL\|, \|\cL^{-1}\|, J$. We have obtained the upper bound for the largest eigenvalue of $K(\bphi, \bphi)$ as desired.

    For the smallest eigenvalue, using \eqref{eqn E 2} again, we have 
    \[y^TK(\bphi, \bphi)y \geq \|\cL\|^{-1} \|\sum_{1\leq i \leq Q} \sum_{0\leq |\gamma| \leq J} y_{i,\gamma}\updelta_{\vx_i} \circ D^{\gamma}\|_{H^{-s}(\Omega)}^2\, .\]
    \blue{Now, we will use the following fact
    \begin{equation*}
        \|w\|_{H^{-s}(\Omega)}^2 = \sup_{v \in H_0^s(\Omega)} 2\int wv - \|v\|_{H_0^s(\Omega)}^2
    \end{equation*}
    for any $w \in H^{-s}(\Omega)$; note that here $\int wv$ should be understood as the dual pairing between $H^{-s}(\Omega)$ and $H^s(\Omega)$. To see why this fact is true, we denote by $\tilde{w}$ the Riesz representer of $w$ in $H_0^s(\Omega)$, so that $\int wv = \langle \tilde{w}, v\rangle_{H_0^s(\Omega)}$ where $\langle \cdot, \cdot\rangle_{H_0^s(\Omega)}$ is the $H_0^s(\Omega)$ inner product and $\|w\|_{H_0^{-s}(\Omega)} = \|\tilde{w}\|_{H_0^s(\Omega)}$. Then, it is straightforward to derive that the supremum is achieved by $v = \tilde{w}$, and the optimum is $\|w\|_{H^{-s}(\Omega)}^2$, the left-hand side.
    }
    
    Using the above fact, we get
    \begin{equation}
    \label{eqn E 3}
        \|\sum_{1\leq i \leq Q} \sum_{0\leq |\gamma| \leq J} y_{i,\gamma}\updelta_{\vx_i} \circ D^{\gamma}\|_{H^{-s}(\Omega)}^2 = \sup_{v \in H^s_0(\Omega)} 2\sum_{1\leq i \leq Q} \sum_{0\leq |\gamma| \leq J} y_{i,\gamma} D^{\gamma}v(\vx_i) - \|v\|_{H^s_0(\Omega)}^2\, .
    \end{equation}
    By restricting $v \in H^s_0(\Omega)$ to $v = \sum_{1\leq i \leq Q} v_i$ with each $v_i \in H_0^s(B(\vx_i,\delta h))$, we obtain
    \begin{equation}
    \label{eqn E 4}
    \begin{aligned}
        &\sup_{v \in H^s_0(\Omega)} 2\sum_{1\leq i \leq Q} \sum_{0\leq |\gamma| \leq J} y_{i,\gamma} D^{\gamma}v(\vx_i) - \|v\|_{H^s_0(\Omega)}^2 \\
        \geq & \sum_{1\leq i \leq Q} \left(\sup_{v_i \in H_0^s(B(\vx_i,\delta h))} 2 \sum_{0\leq |\gamma| \leq J} y_{i,\gamma}D^\gamma v_i(\vx_i) - \|v_i\|_{H_0^s(B(\vx_i,\delta h))}\right)\\
        = & \sum_{1\leq i \leq Q} \|\sum_{0\leq |\gamma| \leq J} y_{i,\gamma}\updelta_{\vx_i} \circ D^{\gamma}\|_{H^{-s}(B(\vx_i,\delta h)}^2\, ,
    \end{aligned}
    \end{equation}
    where in the first inequality, we used the fact that the balls $B(\vx_i,\delta h), 1\leq i \leq Q$ are disjoint.

    By \eqref{eqn: D 23} and \eqref{eqn: D 24}, we know that
    \begin{equation}
        \|\sum_{0
        \leq |\gamma|\leq J} y_{i, \gamma}h^{d/2+|\gamma|}\updelta_{\vx_i} \circ D^{\gamma} \|_{H^{-s}(B(\vx_i, \delta h))} \geq Ch^s |y_i|\, ,
    \end{equation}
    for some constant $C$ depending on $\delta, d, s, J$. Here again, we write $y_i$ to be the vector collecting $y_{i,\gamma}, 0 \leq |\gamma| \leq J$. By change of variables, the above inequality implies that
    \begin{equation}
    \label{eqn E 6}
        \|\sum_{0
        \leq |\gamma|\leq J} y_{i, \gamma}\updelta_{\vx_i} \circ D^{\gamma} \|_{H^{-s}(B(\vx_i, \delta h))} \geq Ch^{s-d/2} |y_i|\, ,
    \end{equation}
    for some constant $C$ depending on $\delta, d, s, J$.
    
    Combining \eqref{eqn E 3}, \eqref{eqn E 4}, \eqref{eqn E 6}, we obtain
    \begin{equation}
        \|\sum_{1\leq i \leq Q} \sum_{0\leq |\gamma| \leq J} y_{i,\gamma}\updelta_{\vx_i} \circ D^{\gamma}\|_{H^{-s}(\Omega)}^2 \geq C h^{2s-d}|y|^2  \, .
    \end{equation}
    With \eqref{eqn E 2}, we obtain
    \begin{equation}
        y^TK(\bphi, \bphi)y \geq C\|\cL\|^{-1}h^{2s-d}|y|^2\, .
    \end{equation}
    The proof is complete.
\end{proof}




\end{document}